\theoremstyle{plain} 
\newtheorem{theorem}{Theorem}[section]
\newtheorem{conjecture}[theorem]{Conjecture}
\newtheorem{lemma}[theorem]{Lemma}
\newtheorem{proposition}[theorem]{Proposition}
\theoremstyle{definition}
\newtheorem{definition}[theorem]{Definition}
\newtheorem{example}[theorem]{Example}
\theoremstyle{remark}
\begin{document}

\title[Strong hull property for rank 3]{The strong hull property for affine irreducible Coxeter groups of rank 3}

\author{Ziming Liu}
\address{School of Mathematics\\
Shandong University\\
Jinan, Shandong, 250100 (China)}
\curraddr{Department of Mathematics\\
The Ohio State University\\
Columbus, OH 43210 (USA)}
\email{liu.12732@osu.edu}

\begin{abstract}
  A conjecture proposed by Gaetz and Gao asserts that the Cayley graph of any Coxeter group satisfies the strong hull property. In this paper, we prove this conjecture for all affine irreducible Coxeter groups of rank 3. Our approach exploits the geometry of Coxeter complexes to reduce the analysis of convex hulls to finitely many manageable configurations.
\end{abstract}

\maketitle

\section{Introduction}

Let $G$ be a connected undirected graph equipped with the distance function $d: V(G) \times V(G) \to \mathbb{Z}_+$. Specifically, $d(x,y)$ represents the shortest path length between vertices $x$ and $y$. A subset $C \subseteq V(G)$ is called \textit{convex} if for any $u, v \in C$ and any $w \in V(G)$ satisfying $d(u,w) + d(w,v) = d(u,v)$, the inclusion $w \in C$ necessarily holds. The \textit{convex hull} $\mathrm{Conv}(X)$ of a subset $X \subseteq V(G)$ is the minimal convex set containing $X$, equivalently expressed as the intersection of all convex sets containing $X$.

\begin{definition}\label{hull property and strong hull property}
    A graph $G$ is defined to satisfy the \textbf{hull property} if for any three vertices $u, v, w \in V(G)$, the cardinality inequality
\begin{equation}\label{hull property}
    |\mathrm{Conv}(u,v)| \cdot |\mathrm{Conv}(v,w)| \geq |\mathrm{Conv}(u,w)|
\end{equation}
holds. When the enhanced condition
\begin{equation}\label{strong hull property}
    |\mathrm{Conv}(u,v)| \cdot |\mathrm{Conv}(v,w)| \geq |\mathrm{Conv}(u,v,w)|
\end{equation}
holds, the graph $G$ is said to exhibit the \textbf{strong hull property}.
\end{definition}

Given a Coxeter group $W$, let $\mathrm{Cay}(W)$ denote its undirected right Cayley graph associated with its generating set. Gaetz-Gao \cite{gaetz2022hull} proposed Conjecture \ref{strong hull conj} concerning convexity properties:

\begin{conjecture}\label{strong hull conj}
    Every Coxeter group $W$ has the property that its Cayley graph $\mathrm{Cay}(W)$ satisfies the strong hull property.
\end{conjecture}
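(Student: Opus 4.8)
The plan is to prove the conjecture within the scope announced in the abstract, namely for the three affine irreducible Coxeter groups of rank $3$ — the affine Weyl groups $\tilde{A}_2$, $\tilde{C}_2$, and $\tilde{G}_2$ — by transporting the entire problem into the geometry of their Coxeter complexes. Each such $W$ acts simply transitively on the chambers of a locally finite tiling of the Euclidean plane $\mathbb{R}^2$ (by equilateral, by right isosceles, and by $(\pi/6,\pi/3,\pi/2)$ triangles, respectively), so I first identify $V(\mathrm{Cay}(W))$ with the set of chambers; the reflection hyperplanes (walls) then fall into $3$, $4$, and $6$ parallel families. The foundational fact is that the graph distance $d(x,y)$ equals the number of walls separating $x$ from $y$, which in turn equals word length. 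From this one gets the half-space description of convex hulls: $\mathrm{Conv}(x,y)$ is exactly the set of chambers $z$ with $d(x,z)+d(z,y)=d(x,y)$, equivalently those $z$ for which every wall separating $x$ from $z$ also separates $x$ from $y$, i.e.\ the chambers lying in the convex polygonal region cut out by all walls that do \emph{not} separate $x$ and $y$. The same description presents $\mathrm{Conv}(u,v,w)$ as the chambers inside the convex polygon carved out by all walls failing to separate the triple.

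A first structural observation organizes the difficulty. Since $\mathrm{Conv}(u,v,w)\supseteq \mathrm{Conv}(u,w)$ always, with equality precisely when $v\in\mathrm{Conv}(u,w)$, the strong hull inequality \eqref{strong hull property} coincides with the plain hull inequality \eqref{hull property} on the triple $(u,v,w)$ exactly in the \emph{aligned} case where $v$ lies on a geodesic from $u$ to $w$. Thus all of the additional content of the strong property over the plain one is concentrated in configurations where $v$ lies off the $u$–$w$ geodesic, forcing $\mathrm{Conv}(u,v,w)$ to be genuinely two-dimensional. I would therefore treat the aligned case by the (easier) hull-type estimate and focus the main effort on the non-aligned regime.

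To count, I normalize $v$ to the fundamental chamber by simple transitivity, so that $u$ and $w$ are encoded by the multiplicities of separating walls in each direction, subject to the geometric compatibility constraints the tiling imposes (in particular no single direction can carry large multiplicity in isolation). Each of $\mathrm{Conv}(u,v)$, $\mathrm{Conv}(v,w)$, $\mathrm{Conv}(u,v,w)$ is then a bounded convex polygon — a zonogon or a triangle — whose chamber count is a piecewise-quadratic quasi-polynomial of Ehrhart type in these integer parameters, which I would write down explicitly in each tiling. The finite point group $W_0$ together with the freedom to interchange $u$ and $w$ reduces the angular position of the pair to a fundamental sector, leaving only finitely many \emph{combinatorial shape-types}; within each type \eqref{strong hull property} becomes a concrete polynomial inequality in the scale parameters. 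A degree count disposes of the bulk: generically $|\mathrm{Conv}(u,v)|\cdot|\mathrm{Conv}(v,w)|$ is a product of two quadratics while $|\mathrm{Conv}(u,v,w)|$ is a single quadratic, so the inequality holds with wide slack and any possible failure is confined to a bounded parameter region that can be checked directly.

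\textbf{Main obstacle.} The crux is the family of \emph{tight} shape-types where the naive degree count fails to separate the two sides — for instance when the relevant displacements are axis-aligned, so that $\mathrm{Conv}(u,v)$ and $\mathrm{Conv}(v,w)$ are both only linearly large while $\mathrm{Conv}(u,v,w)$ is quadratically large, putting the two sides of \eqref{strong hull property} at the \emph{same} order at every scale, not merely for small parameters. Here asymptotics give nothing and the inequality must be extracted from exact chamber counts, so one must control the integer (boundary and parity) corrections in the Ehrhart count of each polygon rather than mere areas, and must do so uniformly across three tilings whose $3$, $4$, and $6$ wall directions produce genuinely different correction terms. I expect the bulk of the work to lie in organizing these exact counts type by type and in verifying, with the correct constants, that the thin pairwise hulls always combine to dominate the quadratic triple hull along each tight family.
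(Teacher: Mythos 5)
Your overall strategy coincides with the paper's: realize the group as a triangulation of $\mathbb{R}^2$, describe $\mathrm{Conv}$ via walls/half-spaces, reduce to finitely many combinatorial configurations, and finish with explicit quasi-polynomial chamber counts and polynomial inequalities. However, as written the proposal stops exactly where the proof begins. You correctly isolate the ``tight'' shape-types (both sides of \eqref{strong hull property} quadratic in the scale parameters) as the crux, but you defer their resolution (``I expect the bulk of the work to lie in\dots''), and those cases are essentially the entire content of the argument: in the paper they are handled by a chain of translation/reflection reduction lemmas that move $u$ and $w$ into canonical corner positions while monotonically controlling all three hull cardinalities (inequalities \eqref{translation inequality}--\eqref{key inequality} and their $\widetilde{C}_2$ analogues), followed by the exact counts and the case analyses of Propositions~\ref{finalprop A2} and~\ref{finalprop trans C2}. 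Without carrying out this step --- or an equivalent verification of the Ehrhart-type inequalities with their boundary/parity corrections --- the conjecture is not proved for any of the three types.

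Two further concrete concerns. First, your symmetry reduction ``by the finite point group $W_0$'' is imprecise: $W_0$ stabilizes a vertex of the tessellation, not the chamber $v$; the relevant symmetries fixing $v$ are the diagram automorphisms, which form $S_3$ for $\widetilde{A}_2$, $\mathbb{Z}/2$ for $\widetilde{C}_2$, and are trivial for $\widetilde{G}_2$. Consequently your plan yields almost no reduction for $\widetilde{G}_2$, where the six wall directions and four chamber orientations make a direct shape-type enumeration substantially harder than you suggest; the paper avoids this entirely by deleting two wall families to compare $\widetilde{G}_2$-hulls with $\widetilde{A}_2$-hulls (the inequalities around \eqref{G2 example 1 ineq2}) and quoting Theorem~\ref{A2 strong hull property}, a device absent from your outline. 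Second, your remark that the aligned case ($v\in\mathrm{Conv}(u,w)$) reduces to the plain hull inequality is true but does not simplify matters here, since the paper proves the hull property only as a byproduct of the strong version; you would still need the full counting argument in that regime.
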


Gaetz-Gao \cite{gaetz2022hull} established the validity of Conjecture \ref{strong hull conj} for symmetric groups (type $A$), hyperoctahedral groups (type $B$), and all right-angled Coxeter groups. They further indicated that computational verification is feasible for some finite Coxeter groups, like types $D_4$, $F_4$, $G_2$, and $H_3$. The methodology for symmetric and hyperoctahedral groups employs insertion maps for linear extensions, which are combinatorial tools intrinsically connected to promotion operations \cite{schutzenberger1972promotion}. Notably, an independent confirmation for the symmetric group case was achieved by Chan-Pak-Panova \cite{chan2023effective}. Furthermore, Gaetz-Gao \cite{gaetz2022hull} developed a constructive approach for the case of right-angled Coxeter groups. From a structural perspective, right-angled Coxeter groups occupy opposed positions among Coxeter groups when compared with symmetric groups and hyperoctahedral groups. This dichotomy manifests algebraically through their non-commuting products $s_is_j$ possessing infinite order, a stark contrast to the small finite orders characterizing finite Coxeter groups. Furthermore, these groups constitute a fundamental object for hull metric verification due to their pervasive presence in geometric group theory \cite{dani2018large}.

To analyze the predictive strength of Conjecture~\ref{strong hull conj}, let's verify a specific restricted configuration. Consider an arbitrary permutation $\sigma$ in the symmetric group $S_n$, with $\sigma^{\mathrm{rev}}$ denoting its reverse permutation. The hull property yields that for any $2$-dimensional poset $P_\sigma$ associated with these permutations, we have
\begin{equation}\label{Sidorenko's result}
    e(P_\sigma) \cdot e(P_{\sigma^{\mathrm{rev}}}) \geq n!\,,
\end{equation}
where $e(P_\sigma)$ denotes the linear extension count, a result attributed to Sidorenko \cite{sidorenko1991inequalities}.

The initial demonstration of inequality \eqref{Sidorenko's result} by Sidorenko \cite{sidorenko1991inequalities} utilized max-flow min-cut techniques. Subsequent research has revealed deep connections between this inequality and diverse methodologies in convex geometry and combinatorial theory. Notably, Bollobás-Brightwell-Sidorenko \cite{BOLLOBAS1999329} provided an alternative convex geometric interpretation through partial results related to the unresolved Mahler Conjecture. More recently, Gaetz-Gao \cite{GAETZ2020107389, GAETZ2020101974} developed enhanced proofs incorporating the algebraic framework of \textit{generalized quotients} \cite{bjorner1988generalized} within the Coxeter group, thus establishing novel connections in this domain. The result in Gaetz-Gao \cite{gaetz2022hull} for the symmetric group extends Sidorenko's inequality \eqref{Sidorenko's result} to any pair of elements.

The following Theorem~\ref{main result} solves one class of cases of Conjecture \ref{strong hull conj}, namely for affine irreducible Coxeter groups of rank 3. In fact, this class only includes affine types $\widetilde{A}_2$, $\widetilde{C}_2$, and $\widetilde{G}_2$. The detailed explanation can be found in Section \ref{Background}.

\begin{theorem}\label{main result}
    Conjecture \ref{strong hull conj} holds for affine irreducible Coxeter groups of rank $3$.
\end{theorem}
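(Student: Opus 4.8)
The plan is to exploit vertex-transitivity of the Cayley graph together with the planar geometry of the affine Coxeter complex. Since left multiplication by any group element is an isometry of $\mathrm{Cay}(W)$, for a triple $(u,v,w)$ I may left-translate by $v^{-1}$ and reduce the strong hull inequality \eqref{strong hull property} to the case $v = e$, i.e. to proving
\begin{equation*}
    |\mathrm{Conv}(e,u)| \cdot |\mathrm{Conv}(e,w)| \geq |\mathrm{Conv}(e,u,w)|
\end{equation*}
for all $u, w \in W$. The decisive step is geometric: for an affine Coxeter group the Coxeter complex tessellates the Euclidean plane by alcoves, the chambers are identified with group elements, and $d$ becomes the gallery (wall-crossing) distance. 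Using the standard fact that a set of chambers is convex precisely when it is an intersection of root half-spaces, each convex hull becomes the set of alcoves lying in the intersection of all closed half-planes containing the relevant chambers. Thus $\mathrm{Conv}(e,u)$ is the alcove set of the convex polygon cut out by the walls not separating $C_0$ from $uC_0$, coinciding with the right weak-order interval $[e,u]$, while $\mathrm{Conv}(e,u,w)$ is the alcove set of the convex polygon spanned by $C_0$, $uC_0$, and $wC_0$. The problem is therefore converted into a lattice-point counting inequality for convex regions in the three tilings $\widetilde{A}_2$, $\widetilde{C}_2$, $\widetilde{G}_2$.

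Next I would introduce, for each type, coordinates indexed by the parallel classes of walls --- three classes for $\widetilde{A}_2$, four for $\widetilde{C}_2$, six for $\widetilde{G}_2$, matching the positive roots of the finite part. For a target alcove $uC_0$ let $k_i$ denote the number of walls of class $i$ separating it from $C_0$, so that the pairwise hull $\mathrm{Conv}(e,u)$ is the intersection of the corresponding strips of widths $k_i$. I would compute $|\mathrm{Conv}(e,u)|$ as an explicit piecewise-quadratic function of the vector $(k_i)$, depending on which strips are mutually binding; this yields a clean quadratic lower bound, together with the trivial bound $|\mathrm{Conv}(e,u)| \geq 2$ whenever $u \neq e$. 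An analogous but more involved computation expresses the triple hull $|\mathrm{Conv}(e,u,w)|$ as at most a quadratic function of the combined extents of $uC_0$ and $wC_0$ in each wall direction.

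With these estimates in hand, the strong hull inequality becomes a polynomial inequality in the coordinates of $uC_0$ and $wC_0$. Here the asymptotics are favorable: the left-hand product of two pairwise hulls grows like a quartic in the distances while the triple hull grows only quadratically, so the inequality is automatic once both points are far from $C_0$. The content lies in the bounded regime and in the boundary geometry, and this is exactly where the reduction to \emph{finitely many manageable configurations} enters, as indicated in Section~\ref{Background}. I would classify the relative position of $uC_0$ and $wC_0$ with respect to $C_0$ according to the angular sectors --- equivalently, the sign patterns of the wall coordinates --- into which they fall; up to the symmetries of the affine Weyl group each such type is represented by finitely many shapes of the triple polygon, and for each shape the quadratic formulas make the inequality directly checkable.

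The main obstacle I anticipate is controlling the constants in the tight regime rather than the asymptotics. The dangerous configurations are those in which one pairwise hull is as small as possible --- for instance when $u$ is a single generator, forcing $|\mathrm{Conv}(e,u)| = 2$ --- while $uC_0$ still enlarges the triple hull relative to $\mathrm{Conv}(e,w)$; I must show that appending a nearby alcove to a convex region can at most double its alcove count in a manner compatible with \eqref{strong hull property}. A second delicate point is the degenerate geometry: when $uC_0$, $wC_0$, and $C_0$ are nearly collinear, or when a wall of the polygon passes exactly through $C_0$, the piecewise-quadratic alcove counts acquire lower-order correction terms, and these boundary cases must be enumerated and verified separately for each of the three types. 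Handling these finitely many exceptional shapes, and confirming that the weak-order interval and the polygon alcove count agree exactly rather than merely asymptotically, is where the bulk of the careful computation will reside.
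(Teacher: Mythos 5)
Your overall strategy coincides with the paper's: realize the Coxeter complex as a planar tessellation, identify $\mathrm{Conv}(u,v)$ with the set of alcoves in a convex polygon cut out by the walls not separating $u$ from $v$ (the paper's Lemma~\ref{cannot cross wall twice} and Proposition~\ref{union is convex hull}), introduce coordinates, and reduce to a case analysis with explicit quadratic counting formulas. However, there is a genuine gap in the step you use to dismiss the unbounded regime. The claim that ``the left-hand product of two pairwise hulls grows like a quartic in the distances while the triple hull grows only quadratically, so the inequality is automatic once both points are far from $C_0$'' is false. A pairwise hull $\mathrm{Conv}(e,u)$ is quadratic in $d(e,u)$ only when $u$ sits in generic position; if $u$ lies along a single geodesic direction the hull degenerates to a thin strip of size $d(e,u)+1$, which is \emph{linear}. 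Taking $u$ and $w$ far from $e$ in two such degenerate directions makes the product $|\mathrm{Conv}(e,u)|\cdot|\mathrm{Conv}(e,w)|$ of order $d(e,u)\,d(e,w)$, the same quadratic order as $|\mathrm{Conv}(e,u,w)|$, so the inequality stays tight along an infinite two-parameter family of configurations and cannot be settled by asymptotics plus a finite check. This is precisely the regime where the paper's Proposition~\ref{finalprop A2} and Proposition~\ref{finalprop trans C2} live: the reduction produces finitely many \emph{shapes}, each still carrying unbounded integer parameters $(x,y,a,b)$, and the resulting parametric polynomial inequalities require a careful case-by-case verification (including delicate boundary subcases such as $x-y=-1$ or $b=0$) that your proposal defers entirely to ``careful computation.''

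Two further pieces of the paper's argument are absent from your plan. First, the normalization of $u,v,w$ to canonical positions is not free: the paper proves translation/monotonicity lemmas (e.g.\ inequalities \eqref{translation inequality}, \eqref{monotonicity relation}, and the area-comparison arguments of Examples~\ref{C2 example 1}--\ref{C2 exmaple 3}) showing that each move decreases the pairwise hulls while not decreasing the triple hull, and in type $\widetilde{C}_2$ this requires a different, area-based bookkeeping because the chamber orientations change under translation. Second, the paper does not treat $\widetilde{G}_2$ by direct classification at all; it proves a comparison inequality relating $\widetilde{G}_2$-hulls to $\widetilde{A}_2$-hulls (Example~\ref{G2 example 1}) and deduces the $\widetilde{G}_2$ case from Theorem~\ref{A2 strong hull property}. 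Your six-wall-class coordinate computation for $\widetilde{G}_2$ would be substantially heavier than what the paper actually does, and in any case is not carried out. As it stands the proposal is a plausible roadmap that matches the paper's geometry but omits the quantitative content on which the proof actually rests.
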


To prove Theorem~\ref{main result}, we examine the geometric interpretations of the types $\widetilde{A}_2$, $\widetilde{C}_2$, and $\widetilde{G}_2$. Specifically, each can be represented as a triangulation of the two-dimensional Euclidean plane. We then analyze these triangular grids by mapping the Coxeter complexes onto the corresponding Cayley graphs. Utilizing classification and reduction techniques, we rigorously establish the results for these three cases through detailed computations in Section~\ref{proof of main}.

\section{Background}\label{Background}

\begin{definition}
    A \textit{Coxeter group} is a group $W$ together with a generating set $S=\{s_1,\cdots,s_r\}$ subject to the relations
\[
\left\{
\begin{array}{ll}
s_i^2=\mathbf{1} & \text{for } i=1,\cdots, r, \\
(s_i s_j)^{m_{ij}}=\mathbf{1} & \text{for } i\neq j \in \{1,\cdots, r\}
\end{array}
\right.
\]
where $m_{ii}=1$, otherwise $m_{ij}=m_{ji}\in\{2,3,\cdots,\infty\}$. One can also write it as a group presentation
$\langle s_1,\cdots, s_n\mid (s_i s_j)^{m_{ij}}=\mathbf{1}\rangle$. The elements of $S$ are called \textit{Coxeter generators} and the cardinality of $S$ is called the \textit{rank} of the \textit{Coxeter system} $(W, S)$.
\end{definition}

There are several ways to describe a Coxeter group. Consider the following mapping 
\[\begin{aligned}
    m:S\times S &\longrightarrow \mathbb{Z}_+\sqcup\{\infty\}\\
    (s_i,s_j)&\longmapsto m_{ij}.
\end{aligned}\]
It can be represented by a graph whose vertices are the elements of $S$ and attach $s_i$ and $s_j$ to form an edge if $m(s_i,s_j)\geq 3$. Label the edges with $m_{ij}$ where $m(s_i,s_j)\geq 4$. The resulting graph is the \textit{Coxeter graph}. A Coxeter system is \textit{irreducible} if its Coxeter graph is connected.

Coxeter groups were classified in 1935 for the finite case \cite{coxeter1935complete}. The aim of this paper is not on the case of finite Coxeter groups. Instead, we focus on affine Coxeter groups. The irreducible affine Coxeter groups correspond to positive semi-definite Coxeter matrices and can be fully classified by their connected Coxeter graphs, as listed in Tab.~\ref{Affine irreducible Coxeter groups}.

\begin{table}[!h]
\caption{Affine irreducible Coxeter groups}
\label{Affine irreducible Coxeter groups}
\centering
\renewcommand{\arraystretch}{1.5} 
\setlength{\tabcolsep}{10pt} 
\resizebox{\textwidth}{!}{ 
\begin{tabular}{|>{\centering\arraybackslash}m{2.5cm}|>{\centering\arraybackslash}m{7cm}|>{\centering\arraybackslash}m{2.5cm}|>{\centering\arraybackslash}m{7cm}|}
\hline
\textbf{Type} & \textbf{Graph} & \textbf{Type} & \textbf{Graph} \\ \hline
$\widetilde{A}_1=I_2(\infty)$ & \begin{tikzpicture}[shorten >=1pt, node distance=2cm, auto]

    \node (1) at (0, 0) [circle, fill=black, inner sep=1.5pt] {};
    
    \node (2) at (1, 0) [circle, fill=black, inner sep=1.5pt] {};

    \draw (1) -- (2) node[midway, above] {$\infty$};

\end{tikzpicture} & $\widetilde{E}_6$ & \begin{tikzpicture}[shorten >=1pt, node distance=2cm, auto]

    \node (1) at (0, 0) [circle, fill=black, inner sep=1.5pt] {};
    
    \node (2) at (1, 0) [circle, fill=black, inner sep=1.5pt] {};
    
    \node (3) at (2, 0) [circle, fill=black, inner sep=1.5pt] {};
    
    \node (4) at (3, 0) [circle, fill=black, inner sep=1.5pt] {};

    \node (5) at (4, 0) [circle, fill=black, inner sep=1.5pt] {};
    
    \node (6) at (2, 1) [circle, fill=black, inner sep=1.5pt] {};
    
    \node (7) at (2, 2) [circle, fill=black, inner sep=1.5pt] {};

    \draw (1) -- (2);
    \draw (2) -- (3);
    \draw (3) -- (4);
    \draw (4) -- (5);
    \draw (3) -- (6);
    \draw (6) -- (7);

\end{tikzpicture} \\ \hline
$\widetilde{A}_{n-1}$, $n\geq 3$ & \begin{tikzpicture}[shorten >=1pt, node distance=2cm, auto]

    \node (1) at (0, 0) [circle, fill=black, inner sep=1.5pt] {};
    \node at (0, -0.4) {$s_1$};
    
    \node (2) at (1, 0) [circle, fill=black, inner sep=1.5pt] {};
    \node at (1, -0.4) {$s_2$};
    
    \node (3) at (3, 0) [circle, fill=black, inner sep=1.5pt] {};
    \node at (3, -0.4) {$s_{n-2}$};
    
    \node (4) at (4, 0) [circle, fill=black, inner sep=1.5pt] {};
    \node at (4, -0.4) {$s_{n-1}$};

    \node (5) at (2, 1) [circle, fill=black, inner sep=1.5pt] {};
    \node at (1.6, 1) {$s_n$};

    \draw (1) -- (2);
    \draw (2) -- (3) [dashed];  
    \draw (3) -- (4);
    \draw (1) -- (5);
    \draw (4) -- (5);

\end{tikzpicture} & $\widetilde{E}_7$ & \begin{tikzpicture}[shorten >=1pt, node distance=2cm, auto]

    \node (1) at (0, 0) [circle, fill=black, inner sep=1.5pt] {};
    
    \node (2) at (1, 0) [circle, fill=black, inner sep=1.5pt] {};
    
    \node (3) at (2, 0) [circle, fill=black, inner sep=1.5pt] {};
    
    \node (4) at (3, 0) [circle, fill=black, inner sep=1.5pt] {};

    \node (5) at (4, 0) [circle, fill=black, inner sep=1.5pt] {};
    
    \node (6) at (5, 0) [circle, fill=black, inner sep=1.5pt] {};
    
    \node (7) at (6, 0) [circle, fill=black, inner sep=1.5pt] {};

    \node (8) at (3, 1) [circle, fill=black, inner sep=1.5pt] {};

    \draw (1) -- (2);
    \draw (2) -- (3);
    \draw (3) -- (4);
    \draw (4) -- (5);
    \draw (5) -- (6);
    \draw (6) -- (7);
    \draw (4) -- (8);

\end{tikzpicture} \\ \hline
$\widetilde{B}_n$, $n\geq 3$ & \begin{tikzpicture}[shorten >=1pt, node distance=2cm, auto]

    \node (1) at (0, 0) [circle, fill=black, inner sep=1.5pt] {};
    \node at (0, -0.4) {$s_0$};
    
    \node (2) at (1, 0) [circle, fill=black, inner sep=1.5pt] {};
    \node at (1, -0.4) {$s_1$};
    
    \node (3) at (2, 0) [circle, fill=black, inner sep=1.5pt] {};
    \node at (2, -0.4) {$s_2$};
    
    \node (4) at (4, 0) [circle, fill=black, inner sep=1.5pt] {};
    \node at (4, -0.4) {$s_{n-2}$};

    \node (5) at (5, 0) [circle, fill=black, inner sep=1.5pt] {};
    \node at (5, -0.4) {$s_{n-1}$};

    \node (6) at (4, 1) [circle, fill=black, inner sep=1.5pt] {};
    \node at (3.6, 1) {$s_n$};

    \draw (1) -- (2) node[midway, above] {$4$};
    \draw (2) -- (3);
    \draw (3) -- (4) [dashed];  
    \draw (4) -- (5);
    \draw (4) -- (6);

\end{tikzpicture} & $\widetilde{E}_8$ & \begin{tikzpicture}[shorten >=1pt, node distance=2cm, auto]

    \node (1) at (0, 0) [circle, fill=black, inner sep=1.5pt] {};
    
    \node (2) at (1, 0) [circle, fill=black, inner sep=1.5pt] {};
    
    \node (3) at (2, 0) [circle, fill=black, inner sep=1.5pt] {};
    
    \node (4) at (3, 0) [circle, fill=black, inner sep=1.5pt] {};

    \node (5) at (4, 0) [circle, fill=black, inner sep=1.5pt] {};
    
    \node (6) at (5, 0) [circle, fill=black, inner sep=1.5pt] {};
    
    \node (7) at (6, 0) [circle, fill=black, inner sep=1.5pt] {};

    \node (8) at (7, 0) [circle, fill=black, inner sep=1.5pt] {};

    \node (9) at (2, 1) [circle, fill=black, inner sep=1.5pt] {};

    \draw (1) -- (2);
    \draw (2) -- (3);
    \draw (3) -- (4);
    \draw (4) -- (5);
    \draw (5) -- (6);
    \draw (6) -- (7);
    \draw (7) -- (8);
    \draw (3) -- (9);

\end{tikzpicture} \\ \hline
$\widetilde{C}_n$, $n\geq 2$ & \begin{tikzpicture}[shorten >=1pt, node distance=2cm, auto]

    \node (1) at (0, 0) [circle, fill=black, inner sep=1.5pt] {};
    \node at (0., -0.4) {$s_0$};
    
    \node (2) at (1, 0) [circle, fill=black, inner sep=1.5pt] {};
    \node at (1, -0.4) {$s_1$};
    
    \node (3) at (2, 0) [circle, fill=black, inner sep=1.5pt] {};
    \node at (2, -0.4) {$s_2$};
    
    \node (4) at (4, 0) [circle, fill=black, inner sep=1.5pt] {};
    \node at (4, -0.4) {$s_{n-2}$};

    \node (5) at (5, 0) [circle, fill=black, inner sep=1.5pt] {};
    \node at (5, -0.4) {$s_{n-1}$};

    \node (6) at (6, 0) [circle, fill=black, inner sep=1.5pt] {};
    \node at (6, -0.4) {$s_n$};

    \draw (1) -- (2) node[midway, above] {$4$};
    \draw (2) -- (3);
    \draw (3) -- (4) [dashed];  
    \draw (4) -- (5);
    \draw (5) -- (6) node[midway, above] {$4$};

\end{tikzpicture}  & $\widetilde{F}_4$ & \begin{tikzpicture}[shorten >=1pt, node distance=2cm, auto]

    \node (1) at (0, 0) [circle, fill=black, inner sep=1.5pt] {};
    
    \node (2) at (1, 0) [circle, fill=black, inner sep=1.5pt] {};
    
    \node (3) at (2, 0) [circle, fill=black, inner sep=1.5pt] {};
    
    \node (4) at (3, 0) [circle, fill=black, inner sep=1.5pt] {};

    \node (5) at (4, 0) [circle, fill=black, inner sep=1.5pt] {};

    \draw (1) -- (2);
    \draw (2) -- (3) node[midway, above] {$4$};
    \draw (3) -- (4);
    \draw (4) -- (5);

\end{tikzpicture} \\ \hline
$\widetilde{D}_n$, $n\geq 4$ & \begin{tikzpicture}[shorten >=1pt, node distance=2cm, auto]

    \node (1) at (0, 0) [circle, fill=black, inner sep=1.5pt] {};
    \node at (0, -0.4) {$s_1$};
    
    \node (2) at (1, 0) [circle, fill=black, inner sep=1.5pt] {};
    \node at (1, -0.4) {$s_2$};
    
    \node (3) at (2, 0) [circle, fill=black, inner sep=1.5pt] {};
    \node at (2, -0.4) {$s_3$};
    
    \node (4) at (4, 0) [circle, fill=black, inner sep=1.5pt] {};
    \node at (4, -0.4) {$s_{n-2}$};

    \node (5) at (5, 0) [circle, fill=black, inner sep=1.5pt] {};
    \node at (5, -0.4) {$s_{n-1}$};

    \node (6) at (4, 1) [circle, fill=black, inner sep=1.5pt] {};
    \node at (3.6, 1) {$s_n$};

    \node (7) at (1, 1) [circle, fill=black, inner sep=1.5pt] {};
    \node at (0.6, 1) {$s_0$};

    \draw (1) -- (2);
    \draw (2) -- (3);
    \draw (3) -- (4) [dashed];  
    \draw (4) -- (5);
    \draw (4) -- (6);
    \draw (2) -- (7);

\end{tikzpicture} & $\widetilde{G}_2$ & \begin{tikzpicture}[shorten >=1pt, node distance=2cm, auto]

    \node (1) at (0, 0) [circle, fill=black, inner sep=1.5pt] {};
    
    \node (2) at (1, 0) [circle, fill=black, inner sep=1.5pt] {};
    
    \node (3) at (2, 0) [circle, fill=black, inner sep=1.5pt] {};

    \draw (1) -- (2) node[midway, above] {$6$};
    \draw (2) -- (3);

\end{tikzpicture} \\ \hline
\end{tabular}
}
\end{table}

Tab.~\ref{Affine irreducible Coxeter groups} is a list of affine irreducible Coxeter groups. Prop.~A.~17 of Malle-Testerman \cite{malle2011linear} and Section~6.7 in Humphreys \cite{humphreys1990reflection} imply that the three types of affine irreducible groups of rank $3$ are $\widetilde{A}_2$, $\widetilde{C}_2$, and $\widetilde{G}_2$.

To study the metric properties of a Coxeter group $W$ with generating set $S$, we naturally consider its \textit{Cayley graph} $\mathrm{Cay}(W,S)$. The vertices of $\mathrm{Cay}(W,S)$ correspond to the elements of $W$, and an edge connects $u$ and $v$ if $u^{-1}v \in S$. The shortest path metric on this graph defines the distance $d(u,v)$ between any two elements. The \textit{length} of an element $w \in W$ is denoted by $\ell(w) = d(\mathbf{1}, w)$. It immediately follows that $d(u, v) = \ell(u^{-1} v)$.

Geometrically, the Coxeter group $W$ acts simply transitively on a geometric structure known as the \textit{Coxeter complex}. The maximal simplices of this complex are called \textit{chambers}, which are in bijection with the elements of $W$. Two chambers are adjacent and share a codimension-$1$ face (referred to as a \textit{panel}) if their corresponding group elements are connected by an edge in $\mathrm{Cay}(W,S)$. 

A path in the Cayley graph naturally corresponds to a \textit{gallery} in the Coxeter complex, which is a finite sequence of chambers $(c_0, c_1, \dots, c_k)$ where each adjacent pair $c_{j-1}$ and $c_j$ shares a panel. A gallery from $x$ to $y$ is \textit{minimal} if its length $k$ equals the distance $d(x, y)$ in the Cayley graph.

A \textit{reflection} $r$ in $W$ is a conjugate of some generator $s \in S$. The fixed points of $r$ acting on the Coxeter complex form a \textit{wall} $M_r$, which is a subcomplex of codimension $1$. A panel lies in $M_r$ if and only if its two adjacent chambers are interchanged by $r$. Consequently, a gallery $(c_0, \dots, c_k)$ is said to \textit{cross} the wall $M_r$ if the reflection $r$ interchanges $c_{i-1}$ and $c_i$ for some $1 \leq i \leq k$.

The proof of the following fundamental lemma regarding minimal galleries and walls can be found in Ronan \cite{ronan2009lectures}.

\begin{lemma}\label{cannot cross wall twice}
    \begin{enumerate}[(i)]
        \item If $y$ is adjacent to $y'$ and distinct from it, then $d(x, y') = d(x, y) \pm 1$.
        \item A minimal gallery (i.e., a shortest path in the Cayley graph) cannot cross any wall more than once.
    \end{enumerate}
\end{lemma}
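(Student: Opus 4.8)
The plan is to prove the two parts independently: part (i) from elementary properties of the length function together with a parity argument, and part (ii) from a folding argument on minimal galleries.

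For part (i), I would first apply the triangle inequality in the Cayley graph. Since $y$ and $y'$ are adjacent we have $d(y,y')=1$, whence $|d(x,y')-d(x,y)|\leq 1$, so the two distances differ by at most one. It then remains only to exclude the possibility that they are equal. For this I would invoke the sign character $\varepsilon\colon W\to\{\pm 1\}$ determined by $\varepsilon(s)=-1$ for every $s\in S$; this is a well-defined homomorphism because each Coxeter relation $(s_is_j)^{m_{ij}}=\mathbf 1$ involves an even total number of generators, and it satisfies $\varepsilon(w)=(-1)^{\ell(w)}$. Writing $y'=ys$ with $s=y^{-1}y'\in S$ and using $d(x,z)=\ell(x^{-1}z)$, one gets $\varepsilon(x^{-1}y')=\varepsilon(x^{-1}y)\,\varepsilon(s)=-\varepsilon(x^{-1}y)$, so $\ell(x^{-1}y)$ and $\ell(x^{-1}y')$ have opposite parity and cannot be equal. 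Combined with the triangle inequality, this forces $d(x,y')=d(x,y)\pm 1$.

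For part (ii), I would argue by contradiction via the reflection $r$ associated to the wall being crossed. Suppose a minimal gallery $\gamma=(c_0,\dots,c_k)$ crosses $M_r$ at two distinct steps $i<j$, so that $r$ interchanges $c_{i-1}$ with $c_i$ and interchanges $c_{j-1}$ with $c_j$. Because $r$ acts as a simplicial automorphism of the Coxeter complex, it carries the subgallery $(c_i,\dots,c_{j-1})$ to the gallery $(rc_i,\dots,rc_{j-1})=(c_{i-1},\dots,c_j)$, which runs from $c_{i-1}$ to $c_j$ and has length $j-i-1$. Splicing this reflected piece in place of the original segment $(c_{i-1},c_i,\dots,c_{j-1},c_j)$ produces an edge walk $c_0,\dots,c_{i-1},rc_{i+1},\dots,rc_{j-1}=c_j,c_{j+1},\dots,c_k$ from $c_0$ to $c_k$ of total length $(i-1)+(j-i-1)+(k-j)=k-2$. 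Since $\gamma$ is minimal, $d(c_0,c_k)=k$, and a walk of length $k-2<k$ is a contradiction, so a minimal gallery crosses each wall at most once.

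The routine verification is that the spliced sequence really is an edge walk: consecutive adjacency is preserved because $r$ preserves adjacency, and the two splice points match because $rc_i=c_{i-1}$ and $rc_{j-1}=c_j$. Even in degenerate cases where some reflected chambers coincide with earlier ones, the resulting walk still has length at most $k-2$, which already yields $d(c_0,c_k)\leq k-2$ — all that is needed. I expect the main subtlety to lie in the index bookkeeping of the folding in part (ii), together with the conceptual input that $r$ genuinely swaps the two chambers adjacent to each panel lying on $M_r$; this last point is precisely the characterization of a wall recalled just before the lemma, so it may be taken as given.
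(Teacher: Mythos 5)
Your proof is correct: the parity argument via the sign character for part (i) and the folding argument (reflecting the subgallery between the two crossings to shorten the walk by two) for part (ii) are both sound, and the index bookkeeping checks out. The paper does not supply its own proof but cites Ronan's \emph{Lectures on Buildings}, and your argument is essentially the standard one given there, so this matches the intended approach.
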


Proposition~\ref{union is convex hull} is an immediate consequence of Lemma~\ref{cannot cross wall twice}.

\begin{proposition}\label{union is convex hull}
    The union of all minimal galleries whose endpoints are the chambers $u$ and $v$ precisely forms the convex hull $\mathrm{Conv}(u,v)$ in the Cayley graph.
\end{proposition}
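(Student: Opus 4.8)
The plan is to prove the two inclusions separately, after identifying the union of all minimal galleries with the metric interval $I(u,v) := \{w : d(u,w) + d(w,v) = d(u,v)\}$. First I would record the elementary observation that a chamber $w$ lies on some minimal gallery from $u$ to $v$ if and only if $w \in I(u,v)$: concatenating a minimal gallery from $u$ to $w$ with one from $w$ to $v$ produces a gallery of length $d(u,w)+d(w,v)$, which is minimal exactly when $w \in I(u,v)$, and conversely splitting any minimal gallery at an intermediate chamber $w$ yields the additivity $d(u,w)+d(w,v)=d(u,v)$. Thus the union of all minimal galleries between $u$ and $v$ is precisely $I(u,v)$, and it remains to prove $I(u,v) = \mathrm{Conv}(u,v)$.

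The inclusion $I(u,v) \subseteq \mathrm{Conv}(u,v)$ is immediate from the definition of convexity: since $\mathrm{Conv}(u,v)$ is convex and contains both $u$ and $v$, every $w$ with $d(u,w)+d(w,v)=d(u,v)$ must lie in it. For the reverse inclusion it suffices, by minimality of the convex hull among convex sets containing $u$ and $v$, to show that $I(u,v)$ is itself convex.

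To establish convexity of $I(u,v)$ I would pass to the language of walls. The structural fact I extract from Lemma~\ref{cannot cross wall twice} is that a minimal gallery crosses each wall at most once and must cross every wall that separates its endpoints, so $d(x,y)$ equals the number of walls separating the chambers $x$ and $y$. Writing each distance as a sum of indicators over all walls and comparing $d(u,w)+d(w,v)$ with $d(u,v)$ wall by wall, I would show that the discrepancy equals $2\sum_{M}[\,M \text{ separates } u \text{ from } w\,]$, the sum ranging over walls $M$ that do \emph{not} separate $u$ from $v$. Consequently $w \in I(u,v)$ if and only if $w$ lies on the same side as $u$ (equivalently $v$) of every wall that does not separate $u$ from $v$; in other words, $I(u,v)$ is exactly the intersection of all half-spaces containing both $u$ and $v$.

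Finally I would verify that each such half-space $H$ is convex: if $a,b \in H$ and $w$ satisfies $d(a,w)+d(w,b)=d(a,b)$, then placing $w$ on the opposite side of the wall $M$ defining $H$ would force the concatenated minimal gallery $a \to w \to b$ to cross $M$ at least twice, contradicting Lemma~\ref{cannot cross wall twice}(ii); hence $w \in H$. Since any intersection of convex sets is convex, $I(u,v)$ is convex, and the argument is complete. I expect the main technical step to be the wall-by-wall accounting that produces the half-space description—getting the indicator bookkeeping exactly right so that the nonnegative discrepancy vanishes precisely on the desired side conditions—whereas the convexity of each individual half-space follows cleanly and directly from the no-double-crossing part of Lemma~\ref{cannot cross wall twice}.
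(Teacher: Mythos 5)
Your proof is correct and follows the same route the paper intends: the paper simply declares Proposition~\ref{union is convex hull} an immediate consequence of Lemma~\ref{cannot cross wall twice}, and your argument supplies exactly the standard wall-crossing details behind that claim (distance equals the number of separating walls, the interval $I(u,v)$ is the intersection of the half-spaces containing both $u$ and $v$, and each half-space is convex by the no-double-crossing property). The wall-by-wall bookkeeping and the half-space convexity step are both carried out correctly, so nothing is missing.
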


For an affine Coxeter group, its Coxeter complex can be represented as a regular triangulation of a Euclidean space, where all chambers are isometric Euclidean simplices and the walls are Euclidean hyperplanes.

\begin{example}[Coxeter complexes]\label{Coxeter complexes as buildings}
In the case of affine irreducible Coxeter groups of rank $3$, each chamber is a $2$-dimensional Euclidean triangle. For any generators $i, j \in I$, the angle between the $i$-face and the $j$-face of the chamber is exactly $\frac{\pi}{m_{ij}}$. Since the interior angles of a Euclidean triangle must sum to $\pi$, the matrix entries must satisfy the geometric constraint
$$
\frac{1}{m_{12}} + \frac{1}{m_{23}} + \frac{1}{m_{31}} = 1.
$$
The only Coxeter graphs that yield integer solutions to this equation correspond to the types $\widetilde{A}_2$, $\widetilde{C}_2$, and $\widetilde{G}_2$. Therefore, the geometric structures of these groups are realized as specific regular triangulations of the two-dimensional Euclidean plane. Fig.~\ref{fig:reflection hyperplanes in Euclidean plane} illustrates the corresponding planar tessellations for these three affine types, which will serve as the primary geometric models for our convex hull analysis.

\begin{figure}[htbp]
    \centering
    \begin{minipage}[b]{0.32\textwidth}
        \centering
        \subfloat[Type $\widetilde{A}_2$]{\resizebox{\linewidth}{!}{\begin{tikzpicture}[shorten >=1pt, baseline=(current bounding box.center)]
            \node (1) at (0, 0) {};
            \node (11) at (3, {3*sqrt(3)}) {};
            \node (2) at (1, 0) {};
            \node (12) at (4, {3*sqrt(3)}) {};
            \node (3) at (2, 0) {};
            \node (13) at (5, {3*sqrt(3)}) {};
            \node (4) at (3, 0) {};
            \node (14) at (6, {3*sqrt(3)}) {};
            \node (5) at (4, 0) {};
            \node (23) at (6, {2*sqrt(3)}) {};
            \node (6) at (5, 0) {};
            \node (21) at (6, {sqrt(3)}) {};
            \node (7) at (6, 0) {};
            \node (8) at (0, {3*sqrt(3)}) {};
            \node (9) at (1, {3*sqrt(3)}) {};
            \node (10) at (2, {3*sqrt(3)}) {};
            \node (20) at (6, {0.5*sqrt(3)}) {};
            \node (22) at (6, {1.5*sqrt(3)}) {};
            \node (24) at (6, {2.5*sqrt(3)}) {};
            \node (15) at (0, {0.5*sqrt(3)}) {};
            \node (16) at (0, {sqrt(3)}) {};
            \node (17) at (0, {1.5*sqrt(3)}) {};
            \node (18) at (0, {2*sqrt(3)}) {};
            \node (19) at (0, {2.5*sqrt(3)}) {};

            \draw (1) -- (11);
            \draw (2) -- (12);
            \draw (3) -- (13);
            \draw (4) -- (14);
            \draw (5) -- (23);
            \draw (6) -- (21);
            \draw (7) -- (11);
            \draw (4) -- (8);
            \draw (5) -- (9);
            \draw (6) -- (10);
            \draw (3) -- (18);
            \draw (2) -- (16);
            \draw (16) -- (10);
            \draw (9) -- (18);
            \draw (21) -- (12);
            \draw (23) -- (13);
            \draw (15) -- (20);
            \draw (16) -- (21);
            \draw (17) -- (22);
            \draw (18) -- (23);
            \draw (19) -- (24);
        \end{tikzpicture}}}
    \end{minipage}%
    \hspace{0.01\textwidth}%
    \begin{minipage}[b]{0.32\textwidth}
        \centering
        \subfloat[Type $\widetilde{C}_2$]{\resizebox{\linewidth}{!}{\begin{tikzpicture}[shorten >=1pt, baseline=(current bounding box.center)]
            \node (1) at (0, 0) {};
            \node (2) at (1, 0) {};
            \node (3) at (2, 0) {};
            \node (4) at (3, 0) {};
            \node (5) at (4, 0) {};
            \node (6) at (5, 0) {};
            \node (7) at (6, 0) {};
            \node (8) at (0, 5) {};
            \node (9) at (1, 5) {};
            \node (10) at (2, 5) {};
            \node (11) at (3, 5) {};
            \node (12) at (4, 5) {};
            \node (13) at (5, 5) {};
            \node (14) at (6, 5) {};
            \node (15) at (0, 1) {};
            \node (16) at (0, 2) {};
            \node (17) at (0, 3) {};
            \node (18) at (0, 4) {};
            \node (19) at (6, 1) {};
            \node (20) at (6, 2) {};
            \node (21) at (6, 3) {};
            \node (22) at (6, 4) {};

            \draw (15) -- (19);
            \draw (16) -- (20);
            \draw (17) -- (21);
            \draw (18) -- (22);
            \draw (2) -- (9);
            \draw (3) -- (10);
            \draw (4) -- (11);
            \draw (5) -- (12);
            \draw (6) -- (13);
            \draw (1) -- (13);
            \draw (11) -- (16);
            \draw (3) -- (22);
            \draw (5) -- (20);
            \draw (3) -- (16);
            \draw (5) -- (18);
            \draw (7) -- (9);
            \draw (11) -- (20);
            \draw (9) -- (18);
            \draw (13) -- (22);
        \end{tikzpicture}}}
    \end{minipage}%
    \hspace{0.01\textwidth}%
    \begin{minipage}[b]{0.32\textwidth}
        \centering
        \subfloat[Type $\widetilde{G}_2$]{\resizebox{\linewidth}{!}{\begin{tikzpicture}[shorten >=1pt, baseline=(current bounding box.center)]\label{reflection hyperplane of G2}
            \node (1) at (0, 0) {};
            \node (11) at (3, {3*sqrt(3)}) {};
            \node (2) at (1, 0) {};
            \node (12) at (4, {3*sqrt(3)}) {};
            \node (3) at (2, 0) {};
            \node (13) at (5, {3*sqrt(3)}) {};
            \node (4) at (3, 0) {};
            \node (14) at (6, {3*sqrt(3)}) {};
            \node (5) at (4, 0) {};
            \node (23) at (6, {2*sqrt(3)}) {};
            \node (6) at (5, 0) {};
            \node (21) at (6, {sqrt(3)}) {};
            \node (7) at (6, 0) {};
            \node (8) at (0, {3*sqrt(3)}) {};
            \node (9) at (1, {3*sqrt(3)}) {};
            \node (10) at (2, {3*sqrt(3)}) {};
            \node (20) at (6, {0.5*sqrt(3)}) {};
            \node (22) at (6, {1.5*sqrt(3)}) {};
            \node (24) at (6, {2.5*sqrt(3)}) {};
            \node (15) at (0, {0.5*sqrt(3)}) {};
            \node (16) at (0, {sqrt(3)}) {};
            \node (17) at (0, {1.5*sqrt(3)}) {};
            \node (18) at (0, {2*sqrt(3)}) {};
            \node (19) at (0, {2.5*sqrt(3)}) {};
            \node (25) at (1.5, 0) {};
            \node (26) at (1.5, {3*sqrt(3)}) {};
            \node (27) at (4.5, 0) {};
            \node (28) at (4.5, {3*sqrt(3)}) {};

            \draw (1) -- (11);
            \draw (2) -- (12);
            \draw (3) -- (13);
            \draw (4) -- (14);
            \draw (5) -- (23);
            \draw (6) -- (21);
            \draw (7) -- (11);
            \draw (4) -- (8);
            \draw (5) -- (9);
            \draw (6) -- (10);
            \draw (3) -- (18);
            \draw (2) -- (16);
            \draw (16) -- (10);
            \draw (9) -- (18);
            \draw (21) -- (12);
            \draw (23) -- (13);
            \draw (15) -- (20);
            \draw (16) -- (21);
            \draw (17) -- (22);
            \draw (18) -- (23);
            \draw (19) -- (24);
            \draw (25) -- (26);
            \draw (27) -- (28);
            \draw (11) -- (18);
            \draw (14) -- (16);
            \draw (1) -- (23);
            \draw (4) -- (21);
            \draw (4) -- (16);
            \draw (7) -- (18);
            \draw (8) -- (21);
            \draw (11) -- (23);
            \draw (4) -- (11);
        \end{tikzpicture}}}
    \end{minipage}
    \caption{The triangulation of Euclidean space for rank-$3$ affine irreducible Coxeter groups}
    \label{fig:reflection hyperplanes in Euclidean plane}
\end{figure}
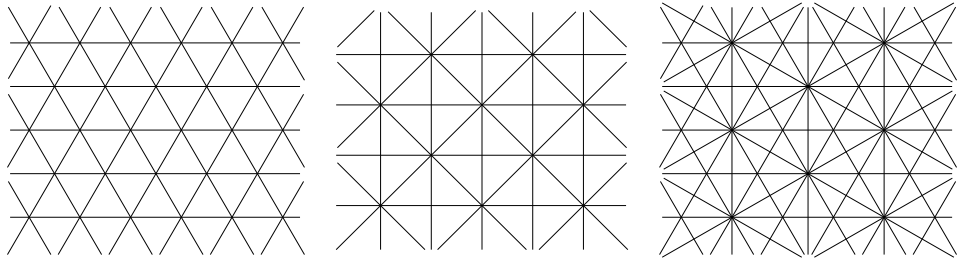
\end{example}

\begin{example}[The infinite dihedral group]\label{infinite dihedral group}
    Before analyzing the planar geometries of rank $3$ groups, we illustrate these metric concepts using the rank $2$ affine group $\widetilde{A}_1$, also known as the \textit{infinite dihedral group} $I_2(\infty)$ (see Tab.~\ref{Affine irreducible Coxeter groups}). Its Coxeter complex can be naturally viewed as a triangulation of the $1$-dimensional Euclidean space (the real line). 
    Let $r_1$ and $r_2$ be reflections on the real line. If we consider the point of reflection for $r_1$ as $x=0$ and for $r_2$ as $x=1$ (as shown in Fig.~\ref{fig:reflections generate infinite dihedral group}), we can express them as functions on $\mathbb{R}$:
    \[r_1(x)=-x,\quad r_2(x)=2-x.\]
    It is easy to check that the reflections $r_1$ and $r_2$ generate $I_2(\infty)$.

    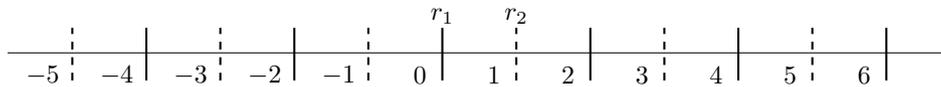
\begin{figure}[htbp]
    \centering
    \resizebox{\textwidth}{!}{

    \begin{tikzpicture}[shorten >=1pt, node distance=2cm, auto]
            \node (1) at (0, 0) {};
            \node (2) at (13, 0) {};
            \node (3) at (1, -0.5) {};
            \node (4) at (1, 0.5) {};
            \node at (0.6, -0.3) {$-5$};
            \node (5) at (2, -0.5) {};
            \node (6) at (2, 0.5) {};
            \node at (1.6, -0.3) {$-4$};
            \node (7) at (3, -0.5) {};
            \node (8) at (3, 0.5) {};
            \node at (2.6, -0.3) {$-3$};
            \node (9) at (4, -0.5) {};
            \node (10) at (4, 0.5) {};
            \node at (3.6, -0.3) {$-2$};
            \node (11) at (5, -0.5) {};
            \node (12) at (5, 0.5) {};
            \node at (4.6, -0.3) {$-1$};
            \node (13) at (6, -0.5) {};
            \node (14) at (6, 0.5) {};
            \node at (5.7, -0.3) {$0$};
            \node (15) at (7, -0.5) {};
            \node (16) at (7, 0.5) {};
            \node at (6.7, -0.3) {$1$};
            \node (17) at (8, -0.5) {};
            \node (18) at (8, 0.5) {};
            \node at (7.7, -0.3) {$2$};
            \node (19) at (9, -0.5) {};
            \node (20) at (9, 0.5) {};
            \node at (8.7, -0.3) {$3$};
            \node (21) at (10, -0.5) {};
            \node (22) at (10, 0.5) {};
            \node at (9.7, -0.3) {$4$};
            \node (23) at (11, -0.5) {};
            \node (24) at (11, 0.5) {};
            \node at (10.7, -0.3) {$5$};
            \node (25) at (12, -0.5) {};
            \node (26) at (12, 0.5) {};
            \node at (11.7, -0.3) {$6$};
            \node at (6, 0.5) {$r_1$};
            \node at (7, 0.5) {$r_2$};
            
            \draw (1) -- (2);
            \draw[thick, dashed] (3) -- (4);
            \draw[thick] (5) -- (6);
            \draw[thick, dashed] (7) -- (8);
            \draw[thick] (9) -- (10);
            \draw[thick, dashed] (11) -- (12);
            \draw[thick] (13) -- (14);
            \draw[thick, dashed] (15) -- (16);
            \draw[thick] (17) -- (18);
            \draw[thick, dashed] (19) -- (20);
            \draw[thick] (21) -- (22);
            \draw[thick, dashed] (23) -- (24);
            \draw[thick] (25) -- (26);
            
        \end{tikzpicture}}
    \caption{The reflections generate $I_2(\infty)$}
    \label{fig:reflections generate infinite dihedral group}
    \end{figure}

    We can use the drawing trick from Remark~1.49 in Meier \cite{JohnMeier.2008.Groups} to construct the Cayley graph for $I_2(\infty)$ with respect to the generators $r_1$ and $r_2$ (as in Fig.~\ref{fig:Cayley graph infinite dihedral group}). The group $I_2(\infty)$ acts on the real line $\mathbb{R}$, where $r_1$ corresponds to the reflection fixing $0$, and $r_2$ corresponds to the reflection fixing $1$.

        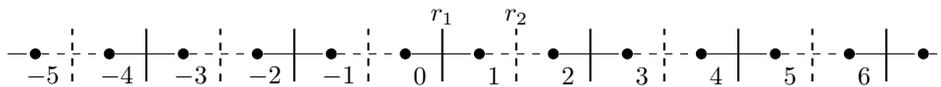
\begin{figure}[htbp]
    \centering
    \resizebox{\textwidth}{!}{

    \begin{tikzpicture}[shorten >=1pt, node distance=2cm, auto]
            \node (1) at (0, 0) {};
            \node (2) at (13, 0) {};
            \node (3) at (1, -0.5) {};
            \node (4) at (1, 0.5) {};
            \node at (0.6, -0.3) {$-5$};
            \node (5) at (2, -0.5) {};
            \node (6) at (2, 0.5) {};
            \node at (1.6, -0.3) {$-4$};
            \node (7) at (3, -0.5) {};
            \node (8) at (3, 0.5) {};
            \node at (2.6, -0.3) {$-3$};
            \node (9) at (4, -0.5) {};
            \node (10) at (4, 0.5) {};
            \node at (3.6, -0.3) {$-2$};
            \node (11) at (5, -0.5) {};
            \node (12) at (5, 0.5) {};
            \node at (4.6, -0.3) {$-1$};
            \node (13) at (6, -0.5) {};
            \node (14) at (6, 0.5) {};
            \node at (5.7, -0.3) {$0$};
            \node (15) at (7, -0.5) {};
            \node (16) at (7, 0.5) {};
            \node at (6.7, -0.3) {$1$};
            \node (17) at (8, -0.5) {};
            \node (18) at (8, 0.5) {};
            \node at (7.7, -0.3) {$2$};
            \node (19) at (9, -0.5) {};
            \node (20) at (9, 0.5) {};
            \node at (8.7, -0.3) {$3$};
            \node (21) at (10, -0.5) {};
            \node (22) at (10, 0.5) {};
            \node at (9.7, -0.3) {$4$};
            \node (23) at (11, -0.5) {};
            \node (24) at (11, 0.5) {};
            \node at (10.7, -0.3) {$5$};
            \node (25) at (12, -0.5) {};
            \node (26) at (12, 0.5) {};
            \node at (11.7, -0.3) {$6$};
            \node at (6, 0.5) {$r_1$};
            \node at (7, 0.5) {$r_2$};
            \node (27) at (0.5, 0) [circle, fill=black, inner sep=1.5pt] {};
            \node (28) at (1.5, 0) [circle, fill=black, inner sep=1.5pt] {};
            \node (29) at (2.5, 0) [circle, fill=black, inner sep=1.5pt] {};
            \node (30) at (3.5, 0) [circle, fill=black, inner sep=1.5pt] {};
            \node (31) at (4.5, 0) [circle, fill=black, inner sep=1.5pt] {};
            \node (32) at (5.5, 0) [circle, fill=black, inner sep=1.5pt] {};
            \node (33) at (6.5, 0) [circle, fill=black, inner sep=1.5pt] {};
            \node (34) at (7.5, 0) [circle, fill=black, inner sep=1.5pt] {};
            \node (35) at (8.5, 0) [circle, fill=black, inner sep=1.5pt] {};
            \node (36) at (9.5, 0) [circle, fill=black, inner sep=1.5pt] {};
            \node (37) at (10.5, 0) [circle, fill=black, inner sep=1.5pt] {};
            \node (38) at (11.5, 0) [circle, fill=black, inner sep=1.5pt] {};
            \node (39) at (12.5, 0) [circle, fill=black, inner sep=1.5pt] {};
            
            \draw[thick, dashed] (3) -- (4);
            \draw[thick] (5) -- (6);
            \draw[thick, dashed] (7) -- (8);
            \draw[thick] (9) -- (10);
            \draw[thick, dashed] (11) -- (12);
            \draw[thick] (13) -- (14);
            \draw[thick, dashed] (15) -- (16);
            \draw[thick] (17) -- (18);
            \draw[thick, dashed] (19) -- (20);
            \draw[thick] (21) -- (22);
            \draw[thick, dashed] (23) -- (24);
            \draw[thick] (25) -- (26);
            \draw (1) -- (27);
            \draw (28) -- (29);
            \draw (30) -- (31);
            \draw (32) -- (33);
            \draw (34) -- (35);
            \draw (36) -- (37);
            \draw (38) -- (39);
            \draw[dashed] (27) -- (28);
            \draw[dashed] (29) -- (30);
            \draw[dashed] (31) -- (32);
            \draw[dashed] (33) -- (34);
            \draw[dashed] (35) -- (36);
            \draw[dashed] (37) -- (38);
            \draw[dashed] (39) -- (2);
            
        \end{tikzpicture}}
    \caption{The Cayley graph of $I_2(\infty)$}
    \label{fig:Cayley graph infinite dihedral group}
    \end{figure}

\end{example}

For the $\widetilde{A}_1$ case (Fig.~\ref{fig:Cayley graph infinite dihedral group}), setting $u = -a \leq 0$, $v = 0$, and $w = b \geq 0$ reduces the hull metric inequality to $(1+a)(1+b) \geq a+b+1$. This holds for all $a, b \geq 0$, verifying Conjecture~\ref{strong hull conj}.

\begin{theorem}
    The Cayley graph for the type $\widetilde{A}_1$ has the strong hull property.
\end{theorem}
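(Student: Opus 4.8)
The plan is to exploit the fact that the Cayley graph of $\widetilde{A}_1 = I_2(\infty)$ with respect to $\{r_1, r_2\}$ is, as Fig.~\ref{fig:Cayley graph infinite dihedral group} shows, an infinite path graph, i.e. isomorphic as a graph to the integer line $\mathbb{Z}$ with edge set $\{\,\{n, n+1\} : n \in \mathbb{Z}\,\}$. Under this identification every vertex is an integer and the graph distance is $d(m,n) = |m-n|$. First I would record the two elementary facts about a path that drive everything: its convex subsets are exactly the integer intervals, and consequently for any finite point set $X$ the hull $\mathrm{Conv}(X)$ is the smallest interval $[\min X, \max X]$, of cardinality $\max X - \min X + 1$. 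Both follow from Proposition~\ref{union is convex hull}, since the unique minimal gallery between $m$ and $n$ is the monotone path through every intervening integer.

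Next I would normalize the triple using the automorphisms of the path. Translation $x \mapsto x + c$ is distance-preserving, so I may assume $v = 0$ (the reflection $x \mapsto -x$ is also available, should one wish to cut down on sign cases). With $v = 0$ the three relevant cardinalities are
\begin{align*}
|\mathrm{Conv}(u,v)| &= |u| + 1, \\
|\mathrm{Conv}(v,w)| &= |w| + 1, \\
|\mathrm{Conv}(u,v,w)| &= \max(u,0,w) - \min(u,0,w) + 1.
\end{align*}

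The heart of the argument is a bound on the total extent of the three normalized points. Because the middle point sits at the origin, one has $\max(u,0,w) - \min(u,0,w) \le |u| + |w|$, with equality precisely when $u$ and $w$ lie on opposite sides of $0$ — the configuration $u = -a \le 0 \le b = w$ already highlighted after Fig.~\ref{fig:Cayley graph infinite dihedral group}. Combining this span estimate with the obvious product expansion yields
\begin{align*}
|\mathrm{Conv}(u,v)| \cdot |\mathrm{Conv}(v,w)| &= (|u| + 1)(|w| + 1) \\
&= |u|\,|w| + |u| + |w| + 1 \\
&\ge |u| + |w| + 1 \\
&\ge |\mathrm{Conv}(u,v,w)|,
\end{align*}
which is exactly the strong hull inequality \eqref{strong hull property}.

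I do not anticipate a genuine obstacle: once the Cayley graph is recognized as a line, the whole statement collapses to the nonnegativity $|u|\,|w| \ge 0$. The only step needing a moment's care is the span estimate, namely verifying that placing $v$ at the origin caps the total extent of the triple at $|u| + |w|$. The tight case is when $v$ lies strictly between $u$ and $w$, precisely the configuration singled out in the text; even there the slack term $|u|\,|w|$ keeps the inequality comfortably satisfied, so no finer case analysis is required.
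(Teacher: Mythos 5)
Your proof is correct and follows essentially the same route as the paper, which also identifies the Cayley graph with the integer line, places $v$ at the origin, and reduces the claim to $(1+a)(1+b)\ge a+b+1$. Your span estimate $\max(u,0,w)-\min(u,0,w)\le |u|+|w|$ in fact makes explicit a small point the paper leaves implicit, namely that the configuration $u\le 0\le w$ is the extremal one and that triples with $u,w$ on the same side of $v$ only make the right-hand side smaller.
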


Since the elements of $W$ are in bijection with the chambers, the Cayley graph $\mathrm{Cay}(W,S)$ can be geometrically realized as the \textit{dual graph} of the Coxeter complex. By placing a vertex at the barycenter of each chamber and connecting adjacent chambers through their shared panels, the Cayley graph for type $\widetilde{A}_2$ forms a regular hexagonal tiling (illustrated by the dashed lines in Fig.~\ref{fig:Cayley graph affine symmetric group}). Similar dual graph constructions apply to the planar tessellations of $\widetilde{C}_2$ and $\widetilde{G}_2$.

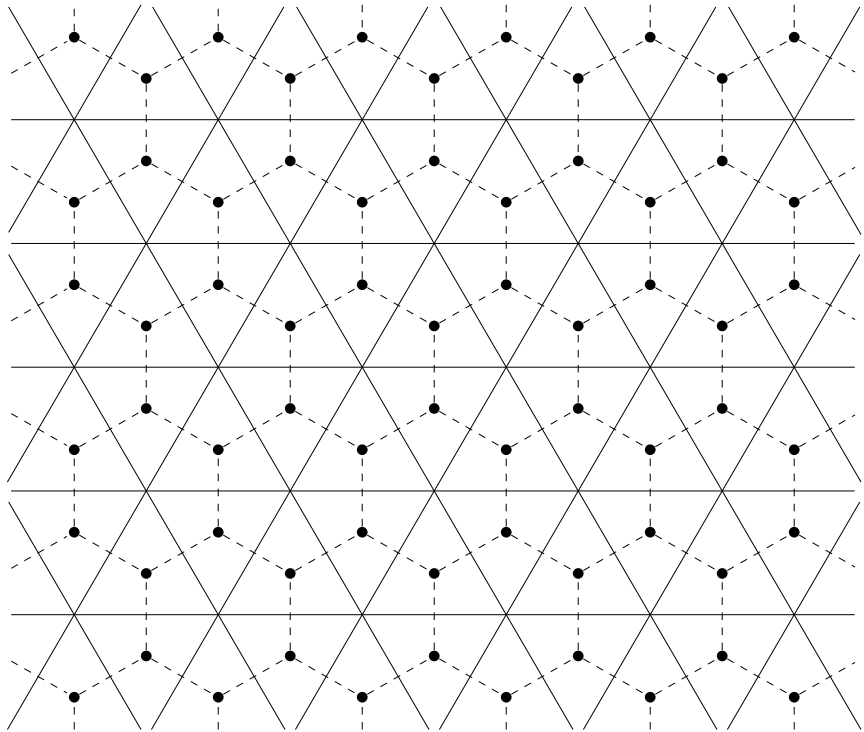
\begin{figure}[htbp]
\centering
\resizebox{\textwidth}{!}{

    \begin{tikzpicture}[shorten >=1pt, node distance=2cm, auto]
    \node (1) at (0, 0) {};
    \node (11) at (6, {6*sqrt(3)}) {};
    \node (2) at (2, 0) {};
    \node (12) at (8, {6*sqrt(3)}) {};
    \node (3) at (4, 0) {};
    \node (13) at (10, {6*sqrt(3)}) {};
    \node (4) at (6, 0) {};
    \node (14) at (12, {6*sqrt(3)}) {};
    \node (5) at (8, 0) {};
    \node (23) at (12, {4*sqrt(3)}) {};
    \node (6) at (10, 0) {};
    \node (21) at (12, {2*sqrt(3)}) {};
    \node (7) at (12, 0) {};
    \node (8) at (0, {6*sqrt(3)}) {};
    \node (9) at (2, {6*sqrt(3)}) {};
    \node (10) at (4, {6*sqrt(3)}) {};
    \node (20) at (12, {sqrt(3)}) {};
    \node (22) at (12, {3*sqrt(3)}) {};
    \node (24) at (12, {5*sqrt(3)}) {};
    \node (15) at (0, {sqrt(3)}) {};
    \node (16) at (0, {2*sqrt(3)}) {};
    \node (17) at (0, {3*sqrt(3)}) {};
    \node (18) at (0, {4*sqrt(3)}) {};
    \node (19) at (0, {5*sqrt(3)}) {};
    \node (25) at (1, 0) {};
    \node (26) at (3, 0) {};
    \node (27) at (5, 0) {};
    \node (28) at (7, 0) {};
    \node (29) at (9, 0) {};
    \node (30) at (11, 0) {};
    \node (31) at (0, {2*sqrt(3)/3}) {};
    \node (32) at (1, {sqrt(3)/3}) [circle, fill=black, inner sep=1.5pt] {};
    \node (33) at (2, {2*sqrt(3)/3}) [circle, fill=black, inner sep=1.5pt] {};
    \node (34) at (3, {sqrt(3)/3}) [circle, fill=black, inner sep=1.5pt] {};
    \node (35) at (4, {2*sqrt(3)/3}) [circle, fill=black, inner sep=1.5pt] {};
    \node (36) at (5, {sqrt(3)/3}) [circle, fill=black, inner sep=1.5pt] {};
    \node (37) at (6, {2*sqrt(3)/3}) [circle, fill=black, inner sep=1.5pt] {};
    \node (38) at (7, {sqrt(3)/3}) [circle, fill=black, inner sep=1.5pt] {};
    \node (39) at (8, {2*sqrt(3)/3}) [circle, fill=black, inner sep=1.5pt] {};
    \node (40) at (9, {sqrt(3)/3}) [circle, fill=black, inner sep=1.5pt] {};
    \node (41) at (10, {2*sqrt(3)/3}) [circle, fill=black, inner sep=1.5pt] {};
    \node (42) at (11, {sqrt(3)/3}) [circle, fill=black, inner sep=1.5pt] {};
    \node (43) at (12, {2*sqrt(3)/3}) {};
    \node (44) at (0, {4*sqrt(3)/3}) {};
    \node (45) at (1, {5*sqrt(3)/3}) [circle, fill=black, inner sep=1.5pt] {};
    \node (46) at (2, {4*sqrt(3)/3}) [circle, fill=black, inner sep=1.5pt] {};
    \node (47) at (3, {5*sqrt(3)/3}) [circle, fill=black, inner sep=1.5pt] {};
    \node (48) at (4, {4*sqrt(3)/3}) [circle, fill=black, inner sep=1.5pt] {};
    \node (49) at (5, {5*sqrt(3)/3}) [circle, fill=black, inner sep=1.5pt] {};
    \node (50) at (6, {4*sqrt(3)/3}) [circle, fill=black, inner sep=1.5pt] {};
    \node (51) at (7, {5*sqrt(3)/3}) [circle, fill=black, inner sep=1.5pt] {};
    \node (52) at (8, {4*sqrt(3)/3}) [circle, fill=black, inner sep=1.5pt] {};
    \node (53) at (9, {5*sqrt(3)/3}) [circle, fill=black, inner sep=1.5pt] {};
    \node (54) at (10, {4*sqrt(3)/3}) [circle, fill=black, inner sep=1.5pt] {};
    \node (55) at (11, {5*sqrt(3)/3}) [circle, fill=black, inner sep=1.5pt] {};
    \node (56) at (12, {4*sqrt(3)/3}) {};
    \node (57) at (0, {8*sqrt(3)/3}) {};
    \node (58) at (1, {7*sqrt(3)/3}) [circle, fill=black, inner sep=1.5pt] {};
    \node (59) at (2, {8*sqrt(3)/3}) [circle, fill=black, inner sep=1.5pt] {};
    \node (60) at (3, {7*sqrt(3)/3}) [circle, fill=black, inner sep=1.5pt] {};
    \node (61) at (4, {8*sqrt(3)/3}) [circle, fill=black, inner sep=1.5pt] {};
    \node (62) at (5, {7*sqrt(3)/3}) [circle, fill=black, inner sep=1.5pt] {};
    \node (63) at (6, {8*sqrt(3)/3}) [circle, fill=black, inner sep=1.5pt] {};
    \node (64) at (7, {7*sqrt(3)/3}) [circle, fill=black, inner sep=1.5pt] {};
    \node (65) at (8, {8*sqrt(3)/3}) [circle, fill=black, inner sep=1.5pt] {};
    \node (66) at (9, {7*sqrt(3)/3}) [circle, fill=black, inner sep=1.5pt] {};
    \node (67) at (10, {8*sqrt(3)/3}) [circle, fill=black, inner sep=1.5pt] {};
    \node (68) at (11, {7*sqrt(3)/3}) [circle, fill=black, inner sep=1.5pt] {};
    \node (69) at (12, {8*sqrt(3)/3}) {};
    \node (70) at (0, {10*sqrt(3)/3}) {};
    \node (71) at (1, {11*sqrt(3)/3}) [circle, fill=black, inner sep=1.5pt] {};
    \node (72) at (2, {10*sqrt(3)/3}) [circle, fill=black, inner sep=1.5pt] {};
    \node (73) at (3, {11*sqrt(3)/3}) [circle, fill=black, inner sep=1.5pt] {};
    \node (74) at (4, {10*sqrt(3)/3}) [circle, fill=black, inner sep=1.5pt] {};
    \node (75) at (5, {11*sqrt(3)/3}) [circle, fill=black, inner sep=1.5pt] {};
    \node (76) at (6, {10*sqrt(3)/3}) [circle, fill=black, inner sep=1.5pt] {};
    \node (77) at (7, {11*sqrt(3)/3}) [circle, fill=black, inner sep=1.5pt] {};
    \node (78) at (8, {10*sqrt(3)/3}) [circle, fill=black, inner sep=1.5pt] {};
    \node (79) at (9, {11*sqrt(3)/3}) [circle, fill=black, inner sep=1.5pt] {};
    \node (80) at (10, {10*sqrt(3)/3}) [circle, fill=black, inner sep=1.5pt] {};
    \node (81) at (11, {11*sqrt(3)/3}) [circle, fill=black, inner sep=1.5pt] {};
    \node (82) at (12, {10*sqrt(3)/3}) {};
    \node (83) at (0, {14*sqrt(3)/3}) {};
    \node (84) at (1, {13*sqrt(3)/3}) [circle, fill=black, inner sep=1.5pt] {};
    \node (85) at (2, {14*sqrt(3)/3}) [circle, fill=black, inner sep=1.5pt] {};
    \node (86) at (3, {13*sqrt(3)/3}) [circle, fill=black, inner sep=1.5pt] {};
    \node (87) at (4, {14*sqrt(3)/3}) [circle, fill=black, inner sep=1.5pt] {};
    \node (88) at (5, {13*sqrt(3)/3}) [circle, fill=black, inner sep=1.5pt] {};
    \node (89) at (6, {14*sqrt(3)/3}) [circle, fill=black, inner sep=1.5pt] {};
    \node (90) at (7, {13*sqrt(3)/3}) [circle, fill=black, inner sep=1.5pt] {};
    \node (91) at (8, {14*sqrt(3)/3}) [circle, fill=black, inner sep=1.5pt] {};
    \node (92) at (9, {13*sqrt(3)/3}) [circle, fill=black, inner sep=1.5pt] {};
    \node (93) at (10, {14*sqrt(3)/3}) [circle, fill=black, inner sep=1.5pt] {};
    \node (94) at (11, {13*sqrt(3)/3}) [circle, fill=black, inner sep=1.5pt] {};
    \node (95) at (12, {14*sqrt(3)/3}) {};
    \node (96) at (0, {16*sqrt(3)/3}) {};
    \node (97) at (1, {17*sqrt(3)/3}) [circle, fill=black, inner sep=1.5pt] {};
    \node (98) at (2, {16*sqrt(3)/3}) [circle, fill=black, inner sep=1.5pt] {};
    \node (99) at (3, {17*sqrt(3)/3}) [circle, fill=black, inner sep=1.5pt] {};
    \node (100) at (4, {16*sqrt(3)/3}) [circle, fill=black, inner sep=1.5pt] {};
    \node (101) at (5, {17*sqrt(3)/3}) [circle, fill=black, inner sep=1.5pt] {};
    \node (102) at (6, {16*sqrt(3)/3}) [circle, fill=black, inner sep=1.5pt] {};
    \node (103) at (7, {17*sqrt(3)/3}) [circle, fill=black, inner sep=1.5pt] {};
    \node (104) at (8, {16*sqrt(3)/3}) [circle, fill=black, inner sep=1.5pt] {};
    \node (105) at (9, {17*sqrt(3)/3}) [circle, fill=black, inner sep=1.5pt] {};
    \node (106) at (10, {16*sqrt(3)/3}) [circle, fill=black, inner sep=1.5pt] {};
    \node (107) at (11, {17*sqrt(3)/3}) [circle, fill=black, inner sep=1.5pt] {};
    \node (108) at (12, {16*sqrt(3)/3}) {};
    \node (109) at (1, {6*sqrt(3)})  {};
    \node (110) at (3, {6*sqrt(3)}) {};
    \node (111) at (5, {6*sqrt(3)}) {};
    \node (112) at (7, {6*sqrt(3)}) {};
    \node (113) at (9, {6*sqrt(3)}) {};
    \node (114) at (11, {6*sqrt(3)}) {};

    \draw (1) -- (11);
    \draw (2) -- (12);
    \draw (3) -- (13);
    \draw (4) -- (14);
    \draw (5) -- (23);
    \draw (6) -- (21);
    \draw (7) -- (11);
    \draw (4) -- (8);
    \draw (5) -- (9);
    \draw (6) -- (10);
    \draw (3) -- (18);
    \draw (2) -- (16);
    \draw (16) -- (10);
    \draw (9) -- (18);
    \draw (21) -- (12);
    \draw (23) -- (13);
    \draw (15) -- (20);
    \draw (16) -- (21);
    \draw (17) -- (22);
    \draw (18) -- (23);
    \draw (19) -- (24);
    \draw[dashed] (31) -- (32);
    \draw[dashed] (32) -- (33);
    \draw[dashed] (33) -- (34);
    \draw[dashed] (34) -- (35);
    \draw[dashed] (35) -- (36);
    \draw[dashed] (36) -- (37);
    \draw[dashed] (37) -- (38);
    \draw[dashed] (38) -- (39);
    \draw[dashed] (39) -- (40);
    \draw[dashed] (40) -- (41);
    \draw[dashed] (41) -- (42);
    \draw[dashed] (42) -- (43);
    \draw[dashed] (25) -- (32);
    \draw[dashed] (26) -- (34);
    \draw[dashed] (27) -- (36);
    \draw[dashed] (28) -- (38);
    \draw[dashed] (29) -- (40);
    \draw[dashed] (30) -- (42);
    \draw[dashed] (44) -- (45);
    \draw[dashed] (45) -- (46);
    \draw[dashed] (46) -- (47);
    \draw[dashed] (47) -- (48);
    \draw[dashed] (48) -- (49);
    \draw[dashed] (49) -- (50);
    \draw[dashed] (50) -- (51);
    \draw[dashed] (51) -- (52);
    \draw[dashed] (52) -- (53);
    \draw[dashed] (53) -- (54);
    \draw[dashed] (54) -- (55);
    \draw[dashed] (55) -- (56);
    \draw[dashed] (33) -- (46);
    \draw[dashed] (35) -- (48);
    \draw[dashed] (37) -- (50);
    \draw[dashed] (39) -- (52);
    \draw[dashed] (41) -- (54);
    \draw[dashed] (45) -- (58);
    \draw[dashed] (47) -- (60);
    \draw[dashed] (49) -- (62);
    \draw[dashed] (51) -- (64);
    \draw[dashed] (53) -- (66);
    \draw[dashed] (55) -- (68);
    \draw[dashed] (57) -- (58);
    \draw[dashed] (58) -- (59);
    \draw[dashed] (59) -- (60);
    \draw[dashed] (60) -- (61);
    \draw[dashed] (61) -- (62);
    \draw[dashed] (62) -- (63);
    \draw[dashed] (63) -- (64);
    \draw[dashed] (64) -- (65);
    \draw[dashed] (65) -- (66);
    \draw[dashed] (66) -- (67);
    \draw[dashed] (67) -- (68);
    \draw[dashed] (68) -- (69);
    \draw[dashed] (59) -- (72);
    \draw[dashed] (61) -- (74);
    \draw[dashed] (63) -- (76);
    \draw[dashed] (65) -- (78);
    \draw[dashed] (67) -- (80);
    \draw[dashed] (70) -- (71);
    \draw[dashed] (71) -- (72);
    \draw[dashed] (72) -- (73);
    \draw[dashed] (73) -- (74);
    \draw[dashed] (74) -- (75);
    \draw[dashed] (75) -- (76);
    \draw[dashed] (76) -- (77);
    \draw[dashed] (77) -- (78);
    \draw[dashed] (78) -- (79);
    \draw[dashed] (79) -- (80);
    \draw[dashed] (80) -- (81);
    \draw[dashed] (81) -- (82);
    \draw[dashed] (83) -- (84);
    \draw[dashed] (84) -- (85);
    \draw[dashed] (85) -- (86);
    \draw[dashed] (86) -- (87);
    \draw[dashed] (87) -- (88);
    \draw[dashed] (88) -- (89);
    \draw[dashed] (89) -- (90);
    \draw[dashed] (90) -- (91);
    \draw[dashed] (91) -- (92);
    \draw[dashed] (92) -- (93);
    \draw[dashed] (93) -- (94);
    \draw[dashed] (94) -- (95);
    \draw[dashed] (71) -- (84);
    \draw[dashed] (73) -- (86);
    \draw[dashed] (75) -- (88);
    \draw[dashed] (77) -- (90);
    \draw[dashed] (79) -- (92);
    \draw[dashed] (81) -- (94);
    \draw[dashed] (98) -- (85);
    \draw[dashed] (100) -- (87);
    \draw[dashed] (102) -- (89);
    \draw[dashed] (104) -- (91);
    \draw[dashed] (106) -- (93);
    \draw[dashed] (97) -- (109);
    \draw[dashed] (99) -- (110);
    \draw[dashed] (111) -- (101);
    \draw[dashed] (112) -- (103);
    \draw[dashed] (113) -- (105);
    \draw[dashed] (114) -- (107);
    \draw[dashed] (96) -- (97);
    \draw[dashed] (97) -- (98);
    \draw[dashed] (98) -- (99);
    \draw[dashed] (99) -- (100);
    \draw[dashed] (100) -- (101);
    \draw[dashed] (101) -- (102);
    \draw[dashed] (102) -- (103);
    \draw[dashed] (103) -- (104);
    \draw[dashed] (104) -- (105);
    \draw[dashed] (105) -- (106);
    \draw[dashed] (106) -- (107);
    \draw[dashed] (107) -- (108);
            
        \end{tikzpicture}}
    \caption{The Cayley graph for $\widetilde{A}_2$}
    \label{fig:Cayley graph affine symmetric group}
\end{figure}

\section{Strong hull property for rank-3 affine irreducible cases}\label{proof of main}

\subsection{Affine type \texorpdfstring{$\widetilde{A}_2$}{A2}}\label{affine type A2}

In order to verify the strong hull inequality \eqref{strong hull property} for type $\widetilde{A}_2$, we will consider its geometric interpretation, refer to Section~4.3 of Humphreys \cite{humphreys1990reflection}. In fact, we have discussed it in Example~\ref{Coxeter complexes as buildings}.

As established through Lemma~\ref{cannot cross wall twice} and Proposition~\ref{union is convex hull}, the convex hull of elements $u$ and $v$ constitutes the union of all the minimal galleries that connect them. This configuration simultaneously forms the maximal gallery structure derived from wall constraints. Thus, all convex hulls in $\mathrm{Cay}(\widetilde{A}_2)$ are hexagons or degenerate hexagons, since they are connected polygons without angles of 240° or 300°. A concrete instantiation of this geometric principle appears in Fig.~\ref{Convex hull of $u$ and $v$ is shaded}.

    \begin{figure}[htbp]
    \centering

    \begin{tikzpicture}[shorten >=1pt, node distance=2cm, auto]
            
            \fill[black!20, opacity=1] (0.5, {sqrt(3)/2}) -- (1.5, {sqrt(3)/2}) -- (1, {sqrt(3)}) -- cycle;
            \fill[black!20, opacity=1] (4, {2*sqrt(3)}) -- (5, {2*sqrt(3)}) -- (4.5, {5*sqrt(3)/2}) -- cycle;
            \fill[black!20, opacity=0.3] (0.5, {sqrt(3)/2}) -- (3.5, {sqrt(3)/2}) -- (5, {2*sqrt(3)}) -- (4.5, {5*sqrt(3)/2}) -- (2.5, {5*sqrt(3)/2}) -- cycle;
            
            \node (1) at (0, 0) {};
            \node (11) at (3, {3*sqrt(3)}) {};
            \node (2) at (1, 0) {};
            \node (12) at (4, {3*sqrt(3)}) {};
            \node (3) at (2, 0) {};
            \node (13) at (5, {3*sqrt(3)}) {};
            \node (4) at (3, 0) {};
            \node (14) at (6, {3*sqrt(3)}) {};
            \node (5) at (4, 0) {};
            \node (23) at (6, {2*sqrt(3)}) {};
            \node (6) at (5, 0) {};
            \node (21) at (6, {sqrt(3)}) {};
            \node (7) at (6, 0) {};
            \node (8) at (0, {3*sqrt(3)}) {};
            \node (9) at (1, {3*sqrt(3)}) {};
            \node (10) at (2, {3*sqrt(3)}) {};
            \node (20) at (6, {0.5*sqrt(3)}) {};
            \node (22) at (6, {1.5*sqrt(3)}) {};
            \node (24) at (6, {2.5*sqrt(3)}) {};
            \node (15) at (0, {0.5*sqrt(3)}) {};
            \node (16) at (0, {sqrt(3)}) {};
            \node (17) at (0, {1.5*sqrt(3)}) {};
            \node (18) at (0, {2*sqrt(3)}) {};
            \node (19) at (0, {2.5*sqrt(3)}) {};
            \node (32) at (1, {2*sqrt(3)/3}) {$u$};
            \node (33) at (4.5, {6.5*sqrt(3)/3}) {$v$};

            \draw (1) -- (11);
            \draw (2) -- (12);
            \draw (3) -- (13);
            \draw (4) -- (14);
            \draw (5) -- (23);
            \draw (6) -- (21);
            \draw (7) -- (11);
            \draw (4) -- (8);
            \draw (5) -- (9);
            \draw (6) -- (10);
            \draw (3) -- (18);
            \draw (2) -- (16);
            \draw (16) -- (10);
            \draw (9) -- (18);
            \draw (21) -- (12);
            \draw (23) -- (13);
            \draw (15) -- (20);
            \draw (16) -- (21);
            \draw (17) -- (22);
            \draw (18) -- (23);
            \draw (19) -- (24);

        \end{tikzpicture}%
    \caption{Convex hull of $u$ and $v$ is shaded.}
    \label{Convex hull of $u$ and $v$ is shaded}
    \end{figure}
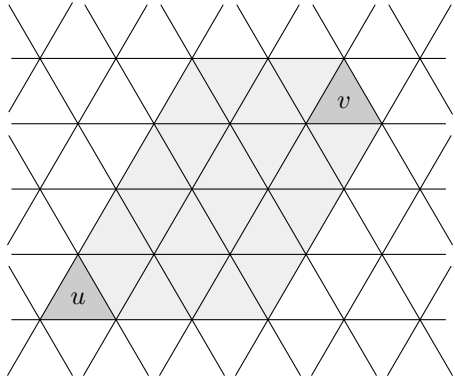

In order to verify inequality \eqref{strong hull property} for $\widetilde{A}_2$, we examine the positions of the three elements $u$, $v$, and $w$ in $\mathrm{Conv}(u,v,w)$. They must lie at the vertices of the (possibly degenerate) hexagon; otherwise, a reduction is applied. Thus, three primary cases arise. In the discussion that follows, although our application of \textit{reduction techniques} may introduce additional scenarios, essentially only the three cases illustrated in Fig.~\ref{fig:three main cases for A2} are relevant.

\begin{figure}[htbp]
    \centering
    \noindent
    %

    \begin{minipage}[b]{0.31\textwidth}
        \centering
        \subfloat[Case~1 for $\widetilde{A}_2$]{%
        \resizebox{\linewidth}{!}{%
            \begin{tikzpicture}[shorten >=1pt, baseline=(current bounding box.center)]\label{A2 case 1}
                \fill[black!20] (0,0) -- (1,0) -- (0.5,{0.5*sqrt(3)}) -- cycle;
                \fill[black!20] (-1,{3*sqrt(3)}) -- (0,{3*sqrt(3)}) -- (-0.5,{3.5*sqrt(3)}) -- cycle;
                \fill[black!20] (7,{2*sqrt(3)}) -- (6.5,{2.5*sqrt(3)}) -- (7.5,{2.5*sqrt(3)}) -- cycle;
                \draw (0,0) -- (1,0) -- (0.5,{0.5*sqrt(3)}) -- cycle;
                \draw (-1,{3*sqrt(3)}) -- (0,{3*sqrt(3)}) -- (-0.5,{3.5*sqrt(3)}) -- cycle;
                \draw (7,{2*sqrt(3)}) -- (6.5,{2.5*sqrt(3)}) -- (7.5,{2.5*sqrt(3)}) -- cycle;
                \draw (0,0) -- (5,0) -- (7.5,{2.5*sqrt(3)}) -- (6.5,{3.5*sqrt(3)}) -- (-0.5,{3.5*sqrt(3)}) -- (-2,{2*sqrt(3)}) -- cycle;
                \node at (0.5, {sqrt(3)/6}) {$u$};
                \node at (-0.5, {3*sqrt(3)+sqrt(3)/6}) {$v$};
                \node at (7, {2.5*sqrt(3)-sqrt(3)/6}) {$w$};
            \end{tikzpicture}%
        }
        }
    \end{minipage}
    \hfill 
    \begin{minipage}[b]{0.31\textwidth}
        \centering
        \subfloat[Case~2 for $\widetilde{A}_2$]{%
        \resizebox{\linewidth}{!}{%
            \begin{tikzpicture}[shorten >=1pt, baseline=(current bounding box.center)]\label{A2 case 2}
                \fill[black!20] (2,0) -- (3,0) -- (2.5,{0.5*sqrt(3)}) -- cycle;
                \fill[black!20] (-2, {2*sqrt(3)}) -- (-1, {2*sqrt(3)}) -- (-1.5, {1.5*sqrt(3)}) -- cycle;
                \fill[black!20] (7, {3*sqrt(3)}) -- (6.5, {3.5*sqrt(3)}) -- (7.5, {3.5*sqrt(3)}) -- cycle;
                \draw (2,0) -- (3,0) -- (2.5,{0.5*sqrt(3)}) -- cycle;
                \draw (-2, {2*sqrt(3)}) -- (-1, {2*sqrt(3)}) -- (-1.5, {1.5*sqrt(3)}) -- cycle;
                \draw (7, {3*sqrt(3)}) -- (6.5, {3.5*sqrt(3)}) -- (7.5, {3.5*sqrt(3)}) -- cycle;
                \draw (0,0) -- (4,0) -- (7.5,{3.5*sqrt(3)}) -- (-0.5,{3.5*sqrt(3)}) -- (-2,{2*sqrt(3)}) -- cycle;
                \node at (2.5, {sqrt(3)/6}) {$u$};
                \node at (-1.5, {2*sqrt(3)-sqrt(3)/6}) {$v$};
                \node at (7, {3.5*sqrt(3)-sqrt(3)/6}) {$w$};
            \end{tikzpicture}%
        }%
        }
    \end{minipage}
    \hfill 
    \begin{minipage}[b]{0.31\textwidth}
        \centering
        \subfloat[Case~3 for $\widetilde{A}_2$]{%
        \resizebox{\linewidth}{!}{%
            \begin{tikzpicture}[shorten >=1pt, baseline=(current bounding box.center)]\label{A2 case 3}
                \fill[black!20] (-2, 0) -- (-1, 0) -- (-1.5, {0.5*sqrt(3)}) -- cycle;
                \fill[black!20] (2, {2*sqrt(3)}) -- (2.5, {1.5*sqrt(3)}) -- (1.5, {1.5*sqrt(3)}) -- cycle;
                \fill[black!20] (7, {3*sqrt(3)}) -- (6.5, {3.5*sqrt(3)}) -- (7.5, {3.5*sqrt(3)}) -- cycle;
                \draw (-2, 0) -- (-1, 0) -- (-1.5, {0.5*sqrt(3)}) -- cycle;
                \draw (2, {2*sqrt(3)}) -- (2.5, {1.5*sqrt(3)}) -- (1.5, {1.5*sqrt(3)}) -- cycle;
                \draw (7, {3*sqrt(3)}) -- (6.5, {3.5*sqrt(3)}) -- (7.5, {3.5*sqrt(3)}) -- cycle;
                \draw (-2,0) -- (4,0) -- (7.5,{3.5*sqrt(3)}) -- (1.5,{3.5*sqrt(3)}) -- cycle;
                \node at (-1.5, {sqrt(3)/6}) {$u$};
                \node at (2, {1.5*sqrt(3)+sqrt(3)/6}) {$v$};
                \node at (7, {3.5*sqrt(3)-sqrt(3)/6}) {$w$};
            \end{tikzpicture}%
        }%
        }
    \end{minipage}

    \caption{Three main cases for $\widetilde{A}_2$}
    \label{fig:three main cases for A2}
\end{figure}
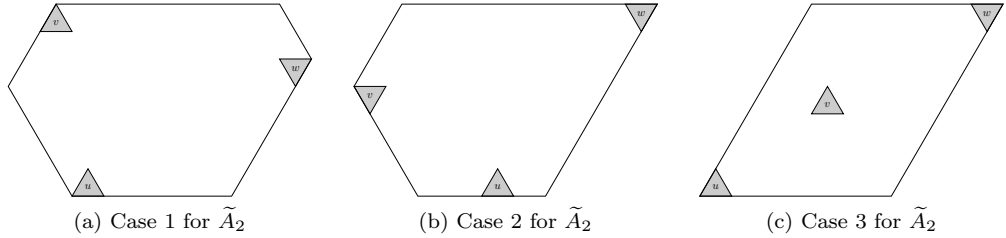

\subsubsection*{Case~1}

Consider the convex hulls of $u$ and $v$, and of $v$ and $w$ in Fig.~\ref{A2 case 1}. These geometric structures are explicitly illustrated in Fig.~\ref{Conv(u,v) & Conv(v,w)}.

    \begin{figure}[htbp]
    \centering

    \begin{tikzpicture}[shorten >=1pt, node distance=2cm, auto]
            
            \fill[black!20] (0,0) -- (1,0) -- (0.5,{0.5*sqrt(3)}) -- cycle;
            \fill[black!20] (-1,{3*sqrt(3)}) -- (0,{3*sqrt(3)}) -- (-0.5,{3.5*sqrt(3)}) -- cycle;
            \fill[black!20] (7,{2*sqrt(3)}) -- (6.5,{2.5*sqrt(3)}) -- (7.5,{2.5*sqrt(3)}) -- cycle;
            \fill[black!20, opacity=0.3] (-1,{3*sqrt(3)}) -- (-0.5,{3.5*sqrt(3)}) -- (6.5,{3.5*sqrt(3)}) -- (7.5,{2.5*sqrt(3)}) -- (7,{2*sqrt(3)}) -- (0,{2*sqrt(3)}) -- cycle;
            \fill[black!20, opacity=0.3] (0,0) -- (1,0) -- (2,{sqrt(3)}) -- (-0.5,{3.5*sqrt(3)}) -- (-2,{2*sqrt(3)}) -- cycle;
            \draw (0,0) -- (1,0) -- (0.5,{0.5*sqrt(3)}) -- cycle;
            \draw (-1,{3*sqrt(3)}) -- (0,{3*sqrt(3)}) -- (-0.5,{3.5*sqrt(3)}) -- cycle;
            \draw (7,{2*sqrt(3)}) -- (6.5,{2.5*sqrt(3)}) -- (7.5,{2.5*sqrt(3)}) -- cycle;
            \draw (0,0) -- (5,0) -- (7.5,{2.5*sqrt(3)}) -- (6.5,{3.5*sqrt(3)}) -- (-0.5,{3.5*sqrt(3)}) -- (-2,{2*sqrt(3)}) -- cycle;
            \node at (0.5, {sqrt(3)/6}) {$u$};
            \node at (-0.5, {3*sqrt(3)+sqrt(3)/6}) {$v$};
            \node at (7, {2.5*sqrt(3)-sqrt(3)/6}) {$w$};

        \end{tikzpicture}%
    \caption{$\mathrm{Conv}(u,v)$ and $\mathrm{Conv}(v,w)$}
    \label{Conv(u,v) & Conv(v,w)}
    \end{figure}

To establish the strong hull inequality \eqref{strong hull property}, we initiate the proof by performing a leftward translation of element $u$ in Fig.~\ref{Conv(u,v) & Conv(v,w)} to position $u'$. This geometric manipulation results in the modified configuration presented in Fig.~\ref{u->u1}.

    \begin{figure}[htbp]
    \centering

    \begin{tikzpicture}[shorten >=1pt, node distance=2cm, auto]
            
            \fill[black!20] (-4,0) -- (-3,0) -- (-3.5,{0.5*sqrt(3)}) -- cycle;
            \fill[black!20] (-1,{3*sqrt(3)}) -- (0,{3*sqrt(3)}) -- (-0.5,{3.5*sqrt(3)}) -- cycle;
            \fill[black!20] (7,{2*sqrt(3)}) -- (6.5,{2.5*sqrt(3)}) -- (7.5,{2.5*sqrt(3)}) -- cycle;
            \fill[black!20, opacity=0.3] (-1,{3*sqrt(3)}) -- (-0.5,{3.5*sqrt(3)}) -- (6.5,{3.5*sqrt(3)}) -- (7.5,{2.5*sqrt(3)}) -- (7,{2*sqrt(3)}) -- (0,{2*sqrt(3)}) -- cycle;
            \fill[black!20, opacity=0.3] (-4,0) -- (-3,0) -- (0,{3*sqrt(3)}) -- (-0.5,{3.5*sqrt(3)}) -- cycle;
            \draw (-4,0) -- (-3,0) -- (-3.5,{0.5*sqrt(3)}) -- cycle;
            \draw (-1,{3*sqrt(3)}) -- (0,{3*sqrt(3)}) -- (-0.5,{3.5*sqrt(3)}) -- cycle;
            \draw (7,{2*sqrt(3)}) -- (6.5,{2.5*sqrt(3)}) -- (7.5,{2.5*sqrt(3)}) -- cycle;
            \draw (-4,0) -- (5,0) -- (7.5,{2.5*sqrt(3)}) -- (6.5,{3.5*sqrt(3)}) -- (-0.5,{3.5*sqrt(3)}) -- cycle;
            \node at (-3.5, {sqrt(3)/6}) {$u_1$};
            \node at (-0.5, {3*sqrt(3)+sqrt(3)/6}) {$v$};
            \node at (7, {2.5*sqrt(3)-sqrt(3)/6}) {$w$};

        \end{tikzpicture}%
    \caption{Translate $u$ to $u_1$}
    \label{u->u1}
    \end{figure}

Through the aforementioned translation procedure, we obtain the inequality
\begin{equation}\label{translation inequality}
    |\mathrm{Conv}(u,v)| \geq d(u,v) + 1 = d(u_1,v) + 1 = |\mathrm{Conv}(u_1,v)|.
\end{equation}
Furthermore, a straightforward verification yields
\begin{equation}\label{monotonicity relation}
    |\mathrm{Conv}(u,v,w)| \geq |\mathrm{Conv}(u_1,v,w)|.
\end{equation}

A direct combination of inequalities \eqref{translation inequality} and \eqref{monotonicity relation} yields the implication
\begin{equation}\label{ineq u'}
    |\mathrm{Conv}(u_1,v)| \cdot |\mathrm{Conv}(v,w)| \geq |\mathrm{Conv}(u_1,v,w)|
\end{equation}
which consequently establishes the strong hull inequality \eqref{strong hull property}.

We now perform an upper-right directional translation of point $w$ from Fig.~\ref{u->u1}, resulting in the configuration shown in Fig.~\ref{w->w1} where $w$ attains its new position $w_1$.

    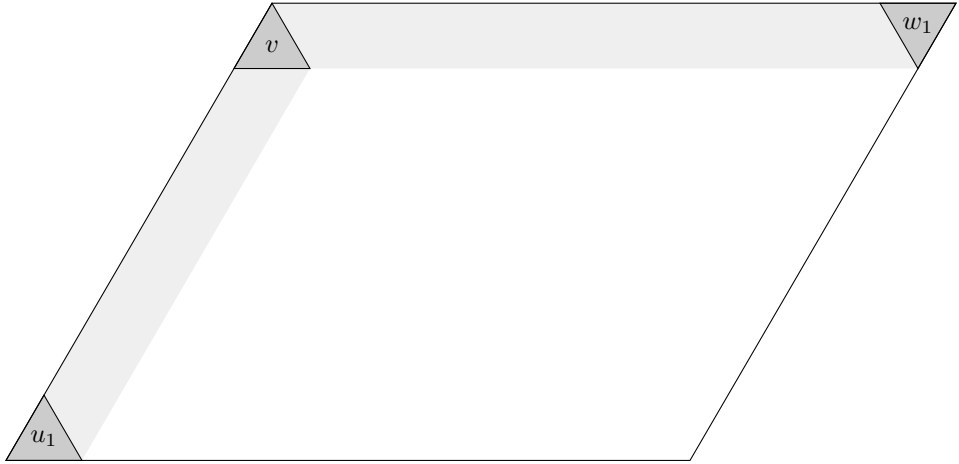
\begin{figure}[htbp]
    \centering

    \begin{tikzpicture}[shorten >=1pt, node distance=2cm, auto]
            
            \fill[black!20] (-4,0) -- (-3,0) -- (-3.5,{0.5*sqrt(3)}) -- cycle;
            \fill[black!20] (-1,{3*sqrt(3)}) -- (0,{3*sqrt(3)}) -- (-0.5,{3.5*sqrt(3)}) -- cycle;
            \fill[black!20] (8,{3*sqrt(3)}) -- (7.5,{3.5*sqrt(3)}) -- (8.5,{3.5*sqrt(3)}) -- cycle;
            \fill[black!20, opacity=0.3] (-1,{3*sqrt(3)}) -- (-0.5,{3.5*sqrt(3)}) -- (8.5,{3.5*sqrt(3)}) -- (8,{3*sqrt(3)}) -- cycle;
            \fill[black!20, opacity=0.3] (-4,0) -- (-3,0) -- (0,{3*sqrt(3)}) -- (-0.5,{3.5*sqrt(3)}) -- cycle;
            \draw (-4,0) -- (-3,0) -- (-3.5,{0.5*sqrt(3)}) -- cycle;
            \draw (-1,{3*sqrt(3)}) -- (0,{3*sqrt(3)}) -- (-0.5,{3.5*sqrt(3)}) -- cycle;
            \draw (8,{3*sqrt(3)}) -- (7.5,{3.5*sqrt(3)}) -- (8.5,{3.5*sqrt(3)}) -- cycle;
            \draw (-4,0) -- (5,0) -- (8.5,{3.5*sqrt(3)}) --  (-0.5,{3.5*sqrt(3)}) -- cycle;
            \node at (-3.5, {sqrt(3)/6}) {$u_1$};
            \node at (-0.5, {3*sqrt(3)+sqrt(3)/6}) {$v$};
            \node at (8, {3.5*sqrt(3)-sqrt(3)/6}) {$w_1$};

        \end{tikzpicture}%
    \caption{Translate $w$ to $w_1$}
    \label{w->w1}
    \end{figure}

By analogous reasoning, we derive the inequalities
\begin{equation}
    |\mathrm{Conv}(v,w)| \geq |\mathrm{Conv}(v,w_1)| \quad \text{and} \quad |\mathrm{Conv}(u_1,v,w)| \leq |\mathrm{Conv}(u_1,v,w_1)|.
\end{equation}
This deduction yields the implication
\begin{equation}\label{implication w'}
    |\mathrm{Conv}(u_1,v)| \cdot |\mathrm{Conv}(v,w_1)| \geq |\mathrm{Conv}(u_1,v,w_1)|
\end{equation}
which consequently establishes inequality \eqref{ineq u'}.

We observe that the key inequality
\begin{equation}\label{key inequality}
    \left(|\mathrm{Conv}(u_1,v)| - 1\right) \cdot \left(|\mathrm{Conv}(v,w_1)| - 1\right) \geq |\mathrm{Conv}(u_1,v,w_1)| - 2
\end{equation}
induces the refined estimate
\begin{equation}
    |\mathrm{Conv}(u_1,v)| \cdot \left(|\mathrm{Conv}(v,w_1)| - 1\right) \geq |\mathrm{Conv}(u_1,v,w_1)| - 1,
\end{equation}
which leads to the ultimate form through successive approximation
\begin{equation}
    |\mathrm{Conv}(u_1,v)| \cdot |\mathrm{Conv}(v,w_1)| \geq |\mathrm{Conv}(u_1,v,w_1)|.
\end{equation}
Consequently, the configuration in Fig.~\ref{w->w1} can be reduced to the essential structure shown in Fig.~\ref{reduction of case 1 A2}.

    \begin{figure}[htbp]
    \centering

    \begin{tikzpicture}[shorten >=1pt, node distance=2cm, auto]
            
            \fill[black!20] (-3,0) -- (-3.5,{0.5*sqrt(3)}) -- (-2.5,{0.5*sqrt(3)}) -- cycle;
            \fill[black!20] (-1,{3*sqrt(3)}) -- (0,{3*sqrt(3)}) -- (-0.5,{3.5*sqrt(3)}) -- cycle;
            \fill[black!20] (7,{3*sqrt(3)}) -- (8,{3*sqrt(3)}) -- (7.5,{3.5*sqrt(3)}) -- cycle;
            \fill[black!20, opacity=0.3] (-1,{3*sqrt(3)}) -- (-0.5,{3.5*sqrt(3)}) -- (7.5,{3.5*sqrt(3)}) -- (8,{3*sqrt(3)}) -- cycle;
            \fill[black!20, opacity=0.3] (-3.5,{0.5*sqrt(3)}) -- (-3,0) -- (0,{3*sqrt(3)}) -- (-0.5,{3.5*sqrt(3)}) -- cycle;
            \draw (-3,0) -- (-3.5,{0.5*sqrt(3)}) -- (-2.5,{0.5*sqrt(3)}) -- cycle;
            \draw (-1,{3*sqrt(3)}) -- (0,{3*sqrt(3)}) -- (-0.5,{3.5*sqrt(3)}) -- cycle;
            \draw (7,{3*sqrt(3)}) -- (8,{3*sqrt(3)}) -- (7.5,{3.5*sqrt(3)}) -- cycle;
            \draw (-3.5,{0.5*sqrt(3)}) -- (-3,0) -- (5,0) -- (8,{3*sqrt(3)}) -- (7.5,{3.5*sqrt(3)}) --  (-0.5,{3.5*sqrt(3)}) -- cycle;
            \node at (-3, {0.5*sqrt(3)-sqrt(3)/6}) {$u_2$};
            \node at (-0.5, {3*sqrt(3)+sqrt(3)/6}) {$v$};
            \node at (7.5, {3*sqrt(3)+sqrt(3)/6}) {$w_2$};

        \end{tikzpicture}%
    \caption{Reduction of Case~1 for $\widetilde{A}_2$}
    \label{reduction of case 1 A2}
    \end{figure}
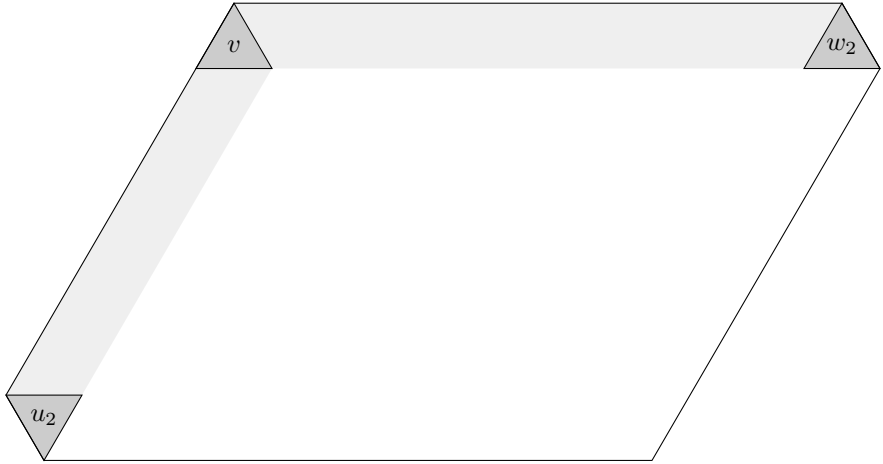

A direct computation reveals the cardinality relations:
\[
\begin{aligned}
|\mathrm{Conv}(u_2,v)| &= |\mathrm{Conv}(u_1,v)| - 1, \\
|\mathrm{Conv}(v,w_2)| &= |\mathrm{Conv}(v,w_1)| - 1, \\
|\mathrm{Conv}(u_2,v,w_2)| &= |\mathrm{Conv}(u_1,v,w_1)| - 2.
\end{aligned}
\]
Consequently, the proof of Case~1 reduces to verifying the fundamental inequality
\begin{equation}\label{base inequality}
|\mathrm{Conv}(u_2,v)| \cdot |\mathrm{Conv}(v,w_2)| \geq |\mathrm{Conv}(u_2,v,w_2)|
\end{equation}
in the reduced configuration depicted in Fig.~\ref{reduction of case 1 A2}.

\subsubsection*{Case~2}

Consider the element $u$ in Fig.~\ref{A2 case 2}, which is potentially situated at the lower-right corner of the convex hull, that is, the (degenerate) hexagon. By performing a leftward translation of $u$ to the position illustrated in Fig.~\ref{u positioning along the bottom wall} below (specifically the lower-left corner), we observe that the following inequality holds throughout this geometric transformation
\begin{equation}
    |\mathrm{Conv}(u_1,v)|= d(u_1,v)+1\leq d(u,v)+1\leq|\mathrm{Conv}(u,v)|.
\end{equation}

    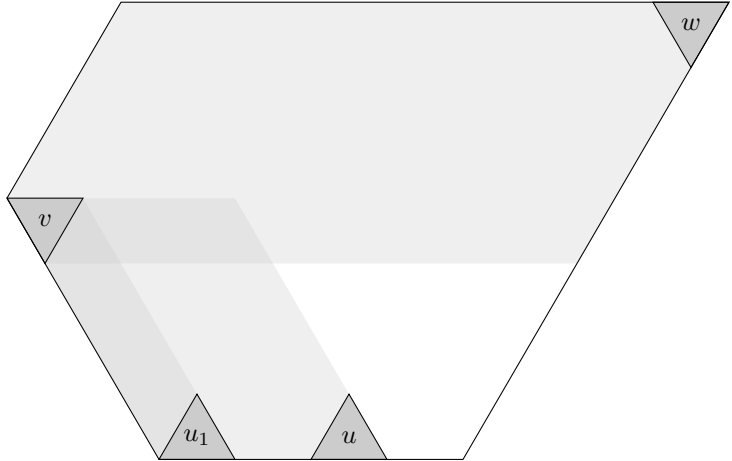
\begin{figure}[htbp]
    \centering

    \begin{tikzpicture}[shorten >=1pt, node distance=2cm, auto]
            
            \fill[black!20] (2,0) -- (3,0) -- (2.5,{0.5*sqrt(3)}) -- cycle;
            \fill[black!20] (-2, {2*sqrt(3)}) -- (-1, {2*sqrt(3)}) -- (-1.5, {1.5*sqrt(3)}) -- cycle;
            \fill[black!20] (7, {3*sqrt(3)}) -- (6.5, {3.5*sqrt(3)}) -- (7.5, {3.5*sqrt(3)}) -- cycle;
            \fill[black!20] (0,0) -- (1,0) -- (0.5,{0.5*sqrt(3)}) -- cycle;
            \fill[black!20, opacity=0.3] (0,0) -- (1,0) -- (-1,{2*sqrt(3)}) -- (-2,{2*sqrt(3)}) -- cycle;
            \fill[black!20, opacity=0.3] (0,0) -- (3,0) -- (1,{2*sqrt(3)}) -- (-2,{2*sqrt(3)}) -- cycle;
            \fill[black!20, opacity=0.3] (-1.5,{1.5*sqrt(3)}) -- (5.5,{1.5*sqrt(3)}) -- (7.5,{3.5*sqrt(3)}) -- (-0.5,{3.5*sqrt(3)}) -- (-2,{2*sqrt(3)}) -- cycle;
            \draw (2,0) -- (3,0) -- (2.5,{0.5*sqrt(3)}) -- cycle;
            \draw (0,0) -- (1,0) -- (0.5,{0.5*sqrt(3)}) -- cycle;
            \draw (-2, {2*sqrt(3)}) -- (-1, {2*sqrt(3)}) -- (-1.5, {1.5*sqrt(3)}) -- cycle;
            \draw (7, {3*sqrt(3)}) -- (6.5, {3.5*sqrt(3)}) -- (7.5, {3.5*sqrt(3)}) -- cycle;
            \draw (0,0) -- (4,0) -- (7.5,{3.5*sqrt(3)}) -- (-0.5,{3.5*sqrt(3)}) -- (-2,{2*sqrt(3)}) -- cycle;
            \node at (0.5, {sqrt(3)/6}) {$u_1$};
            \node at (2.5, {sqrt(3)/6}) {$u$};
            \node at (-1.5, {2*sqrt(3)-sqrt(3)/6}) {$v$};
            \node at (7, {3.5*sqrt(3)-sqrt(3)/6}) {$w$};

        \end{tikzpicture}%
    \caption{$u$ positioning along the bottom wall}
    \label{u positioning along the bottom wall}
    \end{figure}

To establish the inequality
\begin{equation}\label{ineq u case 2 A2}
    |\mathrm{Conv}(u,v)| \cdot |\mathrm{Conv}(v,w)| \geq |\mathrm{Conv}(u,v,w)|,
\end{equation}
it suffices to verify the inequality
\begin{equation}\label{ineq u1 case 2 A2}
    |\mathrm{Conv}(u_1,v)| \cdot |\mathrm{Conv}(v,w)| \geq |\mathrm{Conv}(u_1,v,w)|.
\end{equation}

We perform a leftward translation of element $u$ in Fig.~\ref{A2 case 2}, until it reaches the position $u_2$ as depicted in Fig.~\ref{case2 u2 A2}.

    \begin{figure}[htbp]
    \centering

    \begin{tikzpicture}[shorten >=1pt, node distance=2cm, auto]
            
            \fill[black!20] (-2, {2*sqrt(3)}) -- (-1, {2*sqrt(3)}) -- (-1.5, {1.5*sqrt(3)}) -- cycle;
            \fill[black!20] (7, {3*sqrt(3)}) -- (6.5, {3.5*sqrt(3)}) -- (7.5, {3.5*sqrt(3)}) -- cycle;
            \fill[black!20] (-4,0) -- (-3,0) -- (-3.5,{0.5*sqrt(3)}) -- cycle;
            \fill[black!20, opacity=0.3] (-1.5,{1.5*sqrt(3)}) -- (5.5,{1.5*sqrt(3)}) -- (7.5,{3.5*sqrt(3)}) -- (-0.5,{3.5*sqrt(3)}) -- (-2,{2*sqrt(3)}) -- cycle;
            \fill[black!20, opacity=0.3] (-4,0) -- (-3,0) -- (-1,{2*sqrt(3)}) -- (-2,{2*sqrt(3)}) -- cycle;
            \draw (-4,0) -- (-3,0) -- (-3.5,{0.5*sqrt(3)}) -- cycle;
            \draw (-2, {2*sqrt(3)}) -- (-1, {2*sqrt(3)}) -- (-1.5, {1.5*sqrt(3)}) -- cycle;
            \draw (7, {3*sqrt(3)}) -- (6.5, {3.5*sqrt(3)}) -- (7.5, {3.5*sqrt(3)}) -- cycle;
            \draw (-4,0) -- (4,0) -- (7.5,{3.5*sqrt(3)}) -- (-0.5,{3.5*sqrt(3)}) -- cycle;
            \node at (-3.5, {sqrt(3)/6}) {$u_2$};
            \node at (-1.5, {2*sqrt(3)-sqrt(3)/6}) {$v$};
            \node at (7, {3.5*sqrt(3)-sqrt(3)/6}) {$w$};

        \end{tikzpicture}%
    \caption{Translate $u_1$ to $u_2$}
    \label{case2 u2 A2}
    \end{figure}

Observe that the following equations hold:
\begin{equation}\label{case2 u2v,u,v A2}
    |\mathrm{Conv}(u_2,v)| = d(u_2,v) + 1 = d(u_1,v) + 1 = |\mathrm{Conv}(u_1,v)|,
\end{equation}
and moreover, we have the cardinality inequality
\begin{equation}\label{case2 u2vw A2}
    |\mathrm{Conv}(u_2,v,w)| \geq |\mathrm{Conv}(u_1,v,w)|.
\end{equation}
Consequently, the established inequality
\begin{equation}\label{case2 ineq u2vw A2}
    |\mathrm{Conv}(u_2,v)| \cdot |\mathrm{Conv}(v,w)| \geq |\mathrm{Conv}(u_2,v,w)|
\end{equation}
directly yields the desired inequality \eqref{strong hull property}. To complete the proof, we invoke an analogous argument to that in Case~1, thereby reducing the configuration depicted in Fig.~\ref{case2 u2 A2} to the simplified diagram in Fig.~\ref{Reduction of Case 2 for A2}.

    \begin{figure}[htbp]
    \centering

    \begin{tikzpicture}[shorten >=1pt, node distance=2cm, auto]
            
            \fill[black!20] (-2, {2*sqrt(3)}) -- (-1, {2*sqrt(3)}) -- (-1.5, {1.5*sqrt(3)}) -- cycle;
            \fill[black!20] (6, {3*sqrt(3)}) -- (7, {3*sqrt(3)}) -- (6.5, {3.5*sqrt(3)}) -- cycle;
            \fill[black!20] (-3.5,{0.5*sqrt(3)}) -- (-3,0) -- (-2.5,{0.5*sqrt(3)}) -- cycle;
            \fill[black!20, opacity=0.3] (-1.5,{1.5*sqrt(3)}) -- (5.5,{1.5*sqrt(3)}) -- (7, {3*sqrt(3)}) -- (6.5, {3.5*sqrt(3)}) -- (-0.5,{3.5*sqrt(3)}) -- (-2,{2*sqrt(3)}) -- cycle;
            \fill[black!20, opacity=0.3] (-3.5,{0.5*sqrt(3)}) -- (-3,0) -- (-1,{2*sqrt(3)}) -- (-2,{2*sqrt(3)}) -- cycle;
            \draw (-3.5,{0.5*sqrt(3)}) -- (-3,0) -- (-2.5,{0.5*sqrt(3)}) -- cycle;
            \draw (-2, {2*sqrt(3)}) -- (-1, {2*sqrt(3)}) -- (-1.5, {1.5*sqrt(3)}) -- cycle;
            \draw (6, {3*sqrt(3)}) -- (7, {3*sqrt(3)}) -- (6.5, {3.5*sqrt(3)}) -- cycle;
            \draw (-3.5,{0.5*sqrt(3)}) -- (-3,0) -- (4,0) -- (7, {3*sqrt(3)}) -- (6.5, {3.5*sqrt(3)}) -- (-0.5,{3.5*sqrt(3)}) -- cycle;
            \node at (-3, {0.5*sqrt(3)-sqrt(3)/6}) {$u_3$};
            \node at (-1.5, {2*sqrt(3)-sqrt(3)/6}) {$v$};
            \node at (6.5, {3*sqrt(3)+sqrt(3)/6}) {$w_1$};

        \end{tikzpicture}%
    \caption{Reduction of Case~2 for $\widetilde{A}_2$}
    \label{Reduction of Case 2 for A2}
    \end{figure}
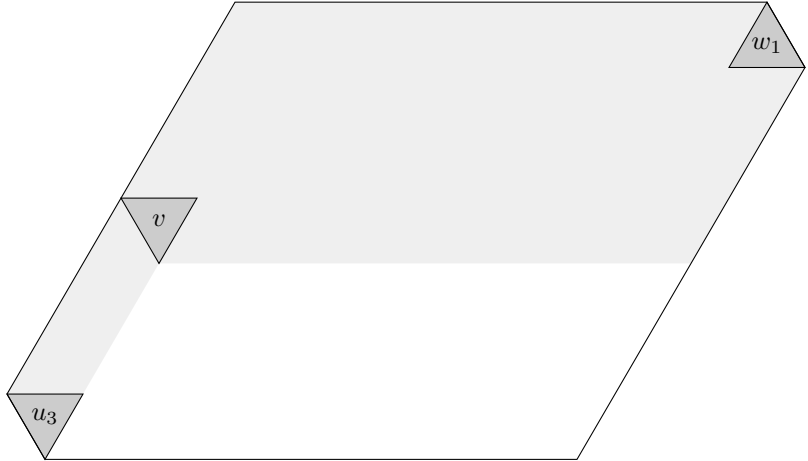

Consequently, it suffices to demonstrate the inequality  
\begin{equation}\label{case2 u3w1 A2}  
    |\mathrm{Conv}(u_3,v)| \cdot |\mathrm{Conv}(v,w_1)| \geq |\mathrm{Conv}(u_3,v,w_1)|,  
\end{equation}  
which corresponds precisely to the framework of Case~1.

\subsubsection*{Case~3}

We must also consider the possibility that $v$ lies within the convex hull $\mathrm{Conv}(u,v)$, specifically when $v$ belongs to the interior of $\mathrm{Conv}(u,v)$ as illustrated in Fig.~\ref{Reduction of Case 3 for A2}. This configuration can be resolved through arguments analogous to those developed in Case~1 and Case~2.

    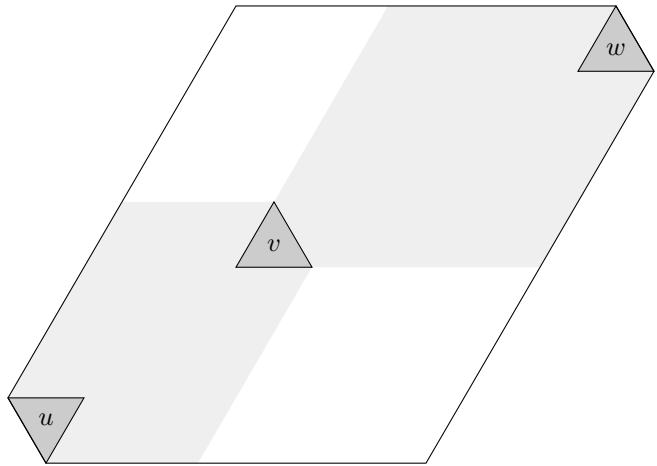
\begin{figure}[htbp]
    \centering

    \begin{tikzpicture}[shorten >=1pt, node distance=2cm, auto]
            
            \fill[black!20] (-1, 0) -- (-1.5, {0.5*sqrt(3)}) -- (-0.5, {0.5*sqrt(3)}) -- cycle;
            \fill[black!20] (2, {2*sqrt(3)}) -- (2.5, {1.5*sqrt(3)}) -- (1.5, {1.5*sqrt(3)}) -- cycle;
            \fill[black!20] (7, {3*sqrt(3)}) -- (6.5, {3.5*sqrt(3)}) -- (6, {3*sqrt(3)}) -- cycle;
            \fill[black!20, opacity=0.3] (-1.5, {0.5*sqrt(3)}) -- (-1, 0) -- (1,0) -- (2.5,{1.5*sqrt(3)}) -- (2,{2*sqrt(3)}) -- (0,{2*sqrt(3)}) -- cycle;
            \fill[black!20, opacity=0.3] (1.5, {1.5*sqrt(3)}) -- (5.5,{1.5*sqrt(3)}) -- (7,{3*sqrt(3)}) -- (6.5,{3.5*sqrt(3)}) -- (3.5,{3.5*sqrt(3)}) -- cycle;
            \draw (-1, 0) -- (-1.5, {0.5*sqrt(3)}) -- (-0.5, {0.5*sqrt(3)}) -- cycle;
            \draw (2, {2*sqrt(3)}) -- (2.5, {1.5*sqrt(3)}) -- (1.5, {1.5*sqrt(3)}) -- cycle;
            \draw (7, {3*sqrt(3)}) -- (6.5, {3.5*sqrt(3)}) -- (6, {3*sqrt(3)}) -- cycle;
            \draw (-1.5, {0.5*sqrt(3)}) -- (-1, 0) -- (4,0) -- (7, {3*sqrt(3)}) -- (6.5, {3.5*sqrt(3)}) -- (1.5,{3.5*sqrt(3)}) -- cycle;
            \node at (-1, {0.5*sqrt(3)-sqrt(3)/6}) {$u$};
            \node at (2, {1.5*sqrt(3)+sqrt(3)/6}) {$v$};
            \node at (6.5, {3*sqrt(3)+sqrt(3)/6}) {$w$};

        \end{tikzpicture}%
    \caption{Reduction of Case~3 for $\widetilde{A}_2$}
    \label{Reduction of Case 3 for A2}
    \end{figure}

\subsubsection*{Formulas and computations}

We have reduced all configurations to three cases involving convex hulls shaped as parallelograms with two truncated corners, as illustrated in Fig.~\ref{reduction of case 1 A2}, Fig.~\ref{Reduction of Case 2 for A2}, and Fig.~\ref{Reduction of Case 3 for A2}. We now proceed to investigate the formulas for strong hulls in these reduced configurations. By taking the chamber $u$ as the origin, we can introduce a Cartesian coordinate system as depicted in Fig.~\ref{Cartesian coordinate}. Within this coordinate framework, each chamber admits a unique coordinate representation. For example, in Fig.~\ref{Cartesian coordinate}, the coordinates of the chambers $u$ and $v$ are explicitly given as $(0,0)$ and $(7,3)$, respectively.

    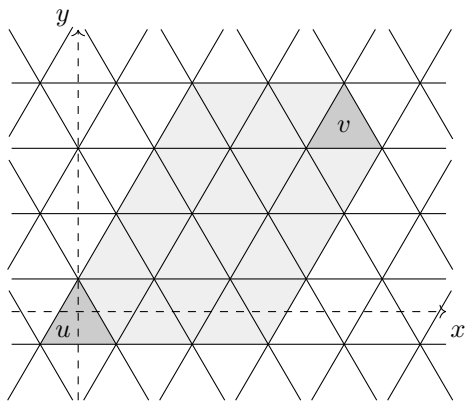
\begin{figure}[htbp]
    \centering

    \begin{tikzpicture}[shorten >=1pt, node distance=2cm, auto]
            
            \fill[black!20, opacity=1] (0.5, {sqrt(3)/2}) -- (1.5, {sqrt(3)/2}) -- (1, {sqrt(3)}) -- cycle;
            \fill[black!20, opacity=1] (4, {2*sqrt(3)}) -- (5, {2*sqrt(3)}) -- (4.5, {5*sqrt(3)/2}) -- cycle;
            \fill[black!20, opacity=0.3] (0.5, {sqrt(3)/2}) -- (3.5, {0.5*sqrt(3)}) -- (5, {2*sqrt(3)}) -- (4.5, {5*sqrt(3)/2}) -- (2.5, {5*sqrt(3)/2}) -- cycle;
            
            \node (1) at (0, 0) {};
            \node (11) at (3, {3*sqrt(3)}) {};
            \node (2) at (1, 0) {};
            \node (12) at (4, {3*sqrt(3)}) {};
            \node (3) at (2, 0) {};
            \node (13) at (5, {3*sqrt(3)}) {};
            \node (4) at (3, 0) {};
            \node (14) at (6, {3*sqrt(3)}) {};
            \node (5) at (4, 0) {};
            \node (23) at (6, {2*sqrt(3)}) {};
            \node (6) at (5, 0) {};
            \node (21) at (6, {sqrt(3)}) {};
            \node (7) at (6, 0) {};
            \node (8) at (0, {3*sqrt(3)}) {};
            \node (9) at (1, {3*sqrt(3)}) {};
            \node (10) at (2, {3*sqrt(3)}) {};
            \node (20) at (6, {0.5*sqrt(3)}) {};
            \node (22) at (6, {1.5*sqrt(3)}) {};
            \node (24) at (6, {2.5*sqrt(3)}) {};
            \node (15) at (0, {0.5*sqrt(3)}) {};
            \node (16) at (0, {sqrt(3)}) {};
            \node (17) at (0, {1.5*sqrt(3)}) {};
            \node (18) at (0, {2*sqrt(3)}) {};
            \node (19) at (0, {2.5*sqrt(3)}) {};
            \node (32) at (0.8, {1.8*sqrt(3)/3}) {$u$};
            \node (33) at (4.5, {6.5*sqrt(3)/3}) {$v$};
            \node (101) at (0, {0.75*sqrt(3)}) {};
            \node (102) at (6, {0.75*sqrt(3)}) {};
            \node (103) at (1, 0) {};
            \node (104) at (1, {3*sqrt(3)}) {};
            \node (105) at (6, {0.6*sqrt(3)}) {$x$};
            \node (106) at (0.8, {3*sqrt(3)}) {$y$};

            \draw (1) -- (11);
            \draw (2) -- (12);
            \draw (3) -- (13);
            \draw (4) -- (14);
            \draw (5) -- (23);
            \draw (6) -- (21);
            \draw (7) -- (11);
            \draw (4) -- (8);
            \draw (5) -- (9);
            \draw (6) -- (10);
            \draw (3) -- (18);
            \draw (2) -- (16);
            \draw (16) -- (10);
            \draw (9) -- (18);
            \draw (21) -- (12);
            \draw (23) -- (13);
            \draw (15) -- (20);
            \draw (16) -- (21);
            \draw (17) -- (22);
            \draw (18) -- (23);
            \draw (19) -- (24);
            \draw[->, dashed] (101) -- (102);
            \draw[->, dashed] (103) -- (104);

        \end{tikzpicture}%
    \caption{Cartesian coordinate in the Cayley graph for $\widetilde{A}_2$}
    \label{Cartesian coordinate}
    \end{figure}

Fig.~\ref{Cartesian coordinate} depicts a special configuration where the chamber $u$ assumes the shape of a downward-pointing triangle ($\bigtriangledown$). More generally, it may also form an upward-pointing triangle ($\bigtriangleup$), as will be discussed subsequently. We first analyze the $\bigtriangleup$ configuration. Let $v$ be positioned at coordinate $(x,y)$, where $x+y$ must be a positive even integer. The convex hull $\mathrm{Conv}(u,v)$, which forms a parallelogram with the top-right chamber removed, contains $y+1$ horizontal rows of chambers. Specifically, the uppermost row consists of $x-y+1$ chambers, while each subsequent row contains $x-y+2$ chambers. Through this geometric decomposition, we deduce the cardinality of chambers in the truncated convex hull:
\begin{equation}\label{coordinateformula2}
    |\mathrm{Conv}(u,v)| = (x-y+1) + y(x-y+2) = xy + x - y^2 + y + 1,
\end{equation}
valid under the parity constraint that $x+y$ is a positive even integer.

When $u$ is configured as a downward-pointing triangle ($\bigtriangledown$), let $v$ be positioned at coordinates $(x,y)$ where the parity condition requires $x+y$ to be a positive odd integer. The truncated parallelogram configuration comprises $y+1$ horizontal rows of triangular chambers. The extremal (top and bottom) rows each contain $x-y+2$ triangular chambers, while intermediate rows contain $x-y+3$ chambers. Through this stratified counting approach, we establish the total chamber count in the bi-truncated parallelogram:
\begin{equation}\label{coordinateformula1}
    |\mathrm{Conv}(u,v)| = 2(x-y+2) + (y-1)(x-y+3) = xy + x - y^2 + 2y + 1,
\end{equation}
valid under the parity constraint that $x+y$ is a positive odd integer.

   \begin{figure}[htbp]
    \centering

    \begin{tikzpicture}[shorten >=1pt, node distance=2cm, auto]
            
            \fill[black!20, opacity=1] (2, {sqrt(3)}) -- (1.5, {sqrt(3)/2}) -- (1, {sqrt(3)}) -- cycle;
            \fill[black!20, opacity=1] (4, {2*sqrt(3)}) -- (5, {2*sqrt(3)}) -- (4.5, {5*sqrt(3)/2}) -- cycle;
            \fill[black!20, opacity=0.3] (1, {sqrt(3)}) -- (1.5, {sqrt(3)/2}) -- (3.5, {0.5*sqrt(3)}) -- (5, {2*sqrt(3)}) -- (4.5, {5*sqrt(3)/2}) -- (2.5, {5*sqrt(3)/2}) -- cycle;
            
            \node (1) at (0, 0) {};
            \node (11) at (3, {3*sqrt(3)}) {};
            \node (2) at (1, 0) {};
            \node (12) at (4, {3*sqrt(3)}) {};
            \node (3) at (2, 0) {};
            \node (13) at (5, {3*sqrt(3)}) {};
            \node (4) at (3, 0) {};
            \node (14) at (6, {3*sqrt(3)}) {};
            \node (5) at (4, 0) {};
            \node (23) at (6, {2*sqrt(3)}) {};
            \node (6) at (5, 0) {};
            \node (21) at (6, {sqrt(3)}) {};
            \node (7) at (6, 0) {};
            \node (8) at (0, {3*sqrt(3)}) {};
            \node (9) at (1, {3*sqrt(3)}) {};
            \node (10) at (2, {3*sqrt(3)}) {};
            \node (20) at (6, {0.5*sqrt(3)}) {};
            \node (22) at (6, {1.5*sqrt(3)}) {};
            \node (24) at (6, {2.5*sqrt(3)}) {};
            \node (15) at (0, {0.5*sqrt(3)}) {};
            \node (16) at (0, {sqrt(3)}) {};
            \node (17) at (0, {1.5*sqrt(3)}) {};
            \node (18) at (0, {2*sqrt(3)}) {};
            \node (19) at (0, {2.5*sqrt(3)}) {};
            \node (32) at (1.32, {0.87*sqrt(3)}) {$u$};
            \node (33) at (4.5, {6.5*sqrt(3)/3}) {$v$};
            \node (101) at (0, {0.75*sqrt(3)}) {};
            \node (102) at (6, {0.75*sqrt(3)}) {};
            \node (103) at (1.5, 0) {};
            \node (104) at (1.5, {3*sqrt(3)}) {};
            \node (105) at (6, {0.6*sqrt(3)}) {$x$};
            \node (106) at (1.3, {3*sqrt(3)}) {$y$};

            \draw (1) -- (11);
            \draw (2) -- (12);
            \draw (3) -- (13);
            \draw (4) -- (14);
            \draw (5) -- (23);
            \draw (6) -- (21);
            \draw (7) -- (11);
            \draw (4) -- (8);
            \draw (5) -- (9);
            \draw (6) -- (10);
            \draw (3) -- (18);
            \draw (2) -- (16);
            \draw (16) -- (10);
            \draw (9) -- (18);
            \draw (21) -- (12);
            \draw (23) -- (13);
            \draw (15) -- (20);
            \draw (16) -- (21);
            \draw (17) -- (22);
            \draw (18) -- (23);
            \draw (19) -- (24);
            \draw[->, dashed] (101) -- (102);
            \draw[->, dashed] (103) -- (104);

        \end{tikzpicture}%
    \caption{$u$ is configured as a downward-pointing triangle ($\bigtriangledown$).}
    \label{triangleup}
    \end{figure}

    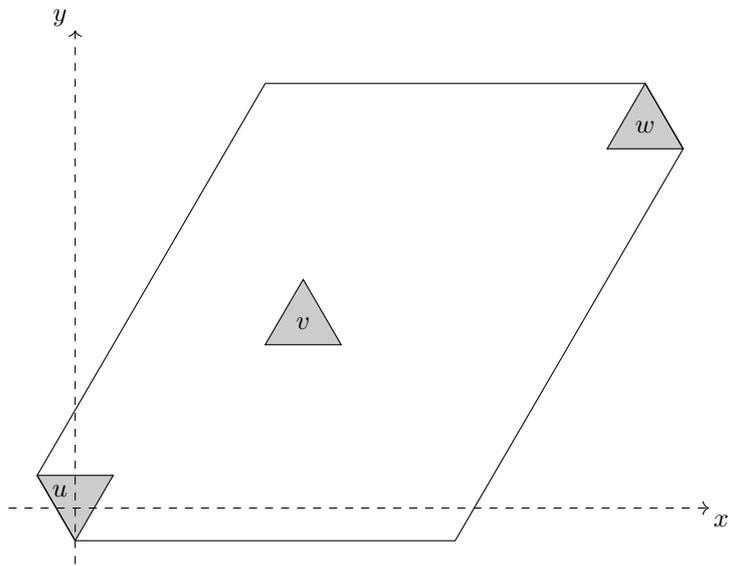
\begin{figure}[htbp]
    \centering

    \begin{tikzpicture}[shorten >=1pt, node distance=2cm, auto]
            
            \fill[black!20] (-1, 0) -- (-1.5, {0.5*sqrt(3)}) -- (-0.5, {0.5*sqrt(3)}) -- cycle;
            \fill[black!20] (2, {2*sqrt(3)}) -- (2.5, {1.5*sqrt(3)}) -- (1.5, {1.5*sqrt(3)}) -- cycle;
            \fill[black!20] (7, {3*sqrt(3)}) -- (6.5, {3.5*sqrt(3)}) -- (6, {3*sqrt(3)}) -- cycle;
            
            \draw (-1, 0) -- (-1.5, {0.5*sqrt(3)}) -- (-0.5, {0.5*sqrt(3)}) -- cycle;
            \draw (2, {2*sqrt(3)}) -- (2.5, {1.5*sqrt(3)}) -- (1.5, {1.5*sqrt(3)}) -- cycle;
            \draw (7, {3*sqrt(3)}) -- (6.5, {3.5*sqrt(3)}) -- (6, {3*sqrt(3)}) -- cycle;
            \draw (-1.5, {0.5*sqrt(3)}) -- (-1, 0) -- (4,0) -- (7, {3*sqrt(3)}) -- (6.5, {3.5*sqrt(3)}) -- (1.5,{3.5*sqrt(3)}) -- cycle;
            \node at (-1.2, {0.55*sqrt(3)-sqrt(3)/6}) {$u$};
            \node at (2, {1.5*sqrt(3)+sqrt(3)/6}) {$v$};
            \node at (6.5, {3*sqrt(3)+sqrt(3)/6}) {$w$};
            \node (101) at (-2, {0.25*sqrt(3)}) {};
            \node (102) at (7.5, {0.25*sqrt(3)}) {};
            \node (103) at (-1, {-0.25*sqrt(3)}) {};
            \node (104) at (-1, {4*sqrt(3)}) {};
            \node at (7.5, {0.15*sqrt(3)}) {$x$};
            \node at (-1.2, {4*sqrt(3)}) {$y$};

            \draw[->, dashed] (101) -- (102);
            \draw[->, dashed] (103) -- (104);
            
        \end{tikzpicture}%
    \caption{Convex hull of $u$, $v$, $w$, where $v$ is configured as a upward-pointing triangle ($\bigtriangleup$)}
    \label{final A2}
    \end{figure}

It suffices to analyze the configuration presented in Fig.~\ref{final A2}, where the vertex $v$ assumes the $\bigtriangleup$ configuration. For configurations where $v$ is a $\bigtriangledown$, we may perform a rotational transformation that maps $v$ to a $\bigtriangleup$ configuration, thereby inducing an equivalence to the case illustrated in Fig.~\ref{final A2}. Let us establish coordinate assignments: $u = (0,0)$, $v = (x,y)$, and $w = (x+a,y+b)$, with parity conditions $x+y$ being a positive odd integer and $a+b$ a positive even integer. The geometric configuration must further satisfy non-crossing constraints relative to the left and right walls of the convex hull $\mathrm{Conv}(u,v,w)$. Violations of these constraints necessitate application of the reduction procedure, yielding a bi-truncated parallelogram configuration as shown in Fig.~\ref{final A2}. This geometric constraint induces the inequalities $x \geq y - 1$ and $a \geq b - 1$. Building upon the cardinality formulas \eqref{coordinateformula2} and \eqref{coordinateformula1}, the verification of inequality \eqref{strong hull property} for $\widetilde{A}_2$ reduces to establishing the following fundamental proposition: 

\begin{proposition}\label{finalprop A2}
    Let non-negative integers $x$, $y$, $a$, $b$ satisfy:
    \begin{enumerate}[(i)]
        \item $x \geq y - 1$ and $a \geq b - 1$,
        \item $x + y$ is a positive odd integer,
        \item $a + b$ is a positive even integer,
    \end{enumerate}
    Then the inequality
    \begin{equation}\label{finalineq}
        \begin{aligned}
        \big(xy &+ x - y^2 + 2y + 1\big)\big(ab + a - b^2 + b + 1\big) \\
        &\geq \Big[(x+a)(y+b) + (x+a) - (y+b)^2 + 2(y+b) + 1\Big]
        \end{aligned}
    \end{equation}
    holds under the given constraints.
\end{proposition}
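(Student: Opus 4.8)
The plan is to strip the quartic inequality \eqref{finalineq} down to a quadratic one by an exact algebraic identity, then remove the parity constraints through an explicit integer substitution, after which the claim becomes a comparison between non-negative monomials. To begin, I set $P := xy + x - y^2 + 2y$ and $Q := ab + a - b^2 + b$, that is, one less than the two convex-hull counts \eqref{coordinateformula1} and \eqref{coordinateformula2}. Expanding both sides of \eqref{finalineq} and collecting terms yields the identity
\begin{equation*}
\begin{aligned}
&\big(xy+x-y^2+2y+1\big)\big(ab+a-b^2+b+1\big) \\
&\quad - \big[(x+a)(y+b)+(x+a)-(y+b)^2+2(y+b)+1\big] \\
&\quad = PQ - \big[b(x-2y+1)+ay\big],
\end{aligned}
\end{equation*}
so that \eqref{finalineq} is equivalent to the single inequality $PQ \ge b(x-2y+1)+ay$. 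This drops the effective degree from four to two and isolates precisely the terms that must be dominated.

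Second, I use hypotheses (i)--(iii). Since $x+y$ is odd and $x \ge y-1$, the quantity $x-y+1$ is a non-negative even integer, so I may write $x = y-1+2p$ with $p \ge 0$; since $a+b$ is even the case $a = b-1$ is excluded, $a-b$ is a non-negative even integer, and I may write $a = b+2k$ with $k \ge 0$. Substituting these collapses the three expressions to
\begin{equation*}
P = 2y-1+2p(y+1), \quad Q = 2b+2k(b+1), \quad b(x-2y+1)+ay = 2(pb+ky),
\end{equation*}
reducing the goal (after dividing by $2$) to
\begin{equation*}
\big(2y-1+2p(y+1)\big)\big(b+k(b+1)\big) \ge pb + ky.
\end{equation*}

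Third, I would settle this by a short case split on $y$. For $y \ge 1$ the left side expands into four non-negative products; among them $2p(y+1)b \ge pb$ and $(2y-1)k(b+1) \ge ky$, the latter because $(2y-1)(b+1) \ge 2y-1 \ge y$, so the bound follows with the two remaining terms to spare. For $y = 0$ the positivity of $x+y$ forces $p \ge 1$, and the inequality becomes $(2p-1)\big(b+k(b+1)\big) \ge pb$, which holds since $(2p-1)b \ge pb$ whenever $p \ge 1$; equality occurs here (for instance at $y=0$, $p=1$, $k=0$), so the estimate is sharp.

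I do not anticipate a genuine obstacle: once the identity of the first step is verified, the parity data convert cleanly into the parameters $p,k$, and every subsequent estimate is an inequality between non-negative monomials. The only places demanding care are the algebraic bookkeeping in establishing the identity and the degenerate boundaries $y=0$, $b=0$, $p=0$, $k=0$, where one must confirm that no term discarded as ``non-negative'' was actually required---which the $y=0$ analysis above addresses directly.
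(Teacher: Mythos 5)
Your proof is correct. I checked the key identity: writing $P = xy+x-y^2+2y$ and $Q = ab+a-b^2+b$, the difference of the two sides of \eqref{finalineq} is exactly $PQ - \bigl[b(x-2y+1)+ay\bigr]$, and the substitutions $x=y-1+2p$, $a=b+2k$ (valid because $x-y+1$ and $a-b$ are non-negative even integers under (i)--(iii)) give $P=2y-1+2p(y+1)$, $Q=2b+2k(b+1)$, and $b(x-2y+1)+ay=2(pb+ky)$. Your two cases then close the argument, and the requirement $x\geq 0$ does force $p\geq 1$ when $y=0$, so the boundary case is handled correctly. Your equality example $x=1$, $y=0$, $a=b\geq 1$ is also genuine.

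Your route differs from the paper's in the decomposition it uses. The paper expands the left-hand side into the four-term form $(x-y+1)(a-b+2)(y+1)b+2y(a-b+2)(b+1)+(x-y+1)(a-b+1)(y+1)-2(y+1)+2$ and the right-hand side as $(x-y+a-b+3)(y+b+1)-2$, then splits on $x-y\geq 1$ versus $x-y=-1$ with further subcases on $y$ and $b$, comparing the product terms directly. Your approach first cancels $P+Q+1$ from both sides to isolate the quadratic inequality $PQ\geq b(x-2y+1)+ay$, and then eliminates the parity hypotheses entirely via the integer parameters $p,k\geq 0$, so that every remaining step is a comparison of non-negative monomials with only one genuine case split ($y\geq 1$ versus $y=0$). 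This buys a shorter and more transparent verification, makes the role of each hypothesis explicit (the parity conditions exist precisely to make $p$ and $k$ integers), and identifies the equality case, which the paper does not discuss. The paper's expansion, on the other hand, keeps the row-by-row chamber counts of the convex hulls visible in the factors $(x-y+1)(y+1)$ and $(a-b+2)b$, which ties the algebra more closely to the geometry of the truncated parallelograms, at the cost of a heavier case analysis.
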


\begin{proof}
    We begin by observing the elementary inequality $mn \geq m + n$ for all integers $m, n \geq 2$. Given that $a+b$ is an even number, it follows that $a-b$ must also be even and non-negative. Through direct computation, the left-hand side of inequality \eqref{finalineq} expands to:
    
    \begin{equation}
        \begin{aligned}
            \mathrm{LHS}=&(x-y+1)(a-b+2)(y+1)b+2y(a-b+2)(b+1)\\
            &+(x-y+1)(a-b+1)(y+1)-2(y+1)+2.
        \end{aligned}
    \end{equation}
    
    Similarly, direct calculation yields the right-hand side:
    
    \begin{equation}
        \mathrm{RHS}=(x-y+a-b+3)(y+b+1)-2.
    \end{equation}
    
    The proof proceeds by case analysis:
    
    \begin{enumerate}
        \item When $x-y \geq 1$: Applying the multiplicative bound
        \begin{equation}
            (x-y+1)(a-b+2)(y+1)b\geq(x-y+a-b+3)(y+b+1)
        \end{equation}
        in conjunction with
        \begin{equation}
            (x-y+1)(a-b+1)(y+1)\geq 2(y+1),
        \end{equation}
        we derive the critical estimate:
        \begin{equation}
            \begin{aligned}
                \mathrm{LHS}&\geq(x+y+a-b+3)(y+b+1)+2y(a-b+2)(b+1)+2\\
                &\geq (x-y+a-b+3)(y+b+1)-2=\mathrm{RHS}.
            \end{aligned}
        \end{equation}
        
        \item When $x-y = -1$: Here $y$ must be positive. When $y=1$, the inequality
        \begin{equation}
            2y(a-b+2)(b+1)>(a-b+2)(b+2)
        \end{equation}
        combined with the constant term $-2$ gives
        \begin{equation}
            \mathrm{LHS}>(a-b+2)(b+2)-2=\mathrm{RHS}.
        \end{equation}
        
        For $y \geq 2$, we analyze the residual quantity:
        \begin{equation}
            \begin{aligned}
                &\mathrm{LHS}-\mathrm{RHS}+2(y+1)-4\\
                =&2y(a-b+2)(b+1)-(a-b+2)(y+b+1)\\
                =&(a-b+2)(2by+y-b-1).
            \end{aligned}
        \end{equation}
        
        \begin{itemize}
            \item \textit{Subcase~1:} $b \geq 1$. Employing the bound $by \geq 2b \geq b+1$, we obtain
            \begin{equation}
                (a-b+2)(2by+y-b-1) \geq 2(y+1)>2(y+1)-4.
            \end{equation}
            
            \item \textit{Subcase~2} $b=0$. Direct substitution produces
            \begin{equation}
                (a+2)(y-1) \geq 2y - 2 = 2(y+1)-4.
            \end{equation}
        \end{itemize}
    \end{enumerate}
    
    The case-by-case verification in the above establishes the validity of inequality \eqref{finalineq}.
\end{proof}

Therefore, we have established Conjecture~\ref{strong hull conj} for Coxeter groups of type $\widetilde{A}_2$.

\begin{theorem}[Strong hull property for type $\widetilde{A}_2$]\label{A2 strong hull property}
    The Cayley graph of affine type $\widetilde{A}_2$ has the strong hull property.
\end{theorem}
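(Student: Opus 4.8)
The plan is to assemble the geometric and combinatorial ingredients developed above into a single synthesis, since the computational core has already been isolated in Proposition~\ref{finalprop A2}. I would begin by invoking Proposition~\ref{union is convex hull} to identify every convex hull $\mathrm{Conv}(u,v)$ in $\mathrm{Cay}(\widetilde{A}_2)$ with the union of all minimal galleries joining the chambers $u$ and $v$. By Lemma~\ref{cannot cross wall twice}, no minimal gallery recrosses a wall, so this union is cut out by walls in the three reflection directions, at most two per direction; hence it is a convex region with no reflex angles, i.e. a (possibly degenerate) hexagon. This geometric description is the foundation: it converts the abstract cardinality inequality \eqref{strong hull property} into a problem of counting triangular chambers inside an explicit planar region.

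Next I would reduce the three-point problem to a canonical shape. Placing $u,v,w$ at the vertices of the hexagon $\mathrm{Conv}(u,v,w)$, I would argue that every configuration falls into one of the three cases of Fig.~\ref{fig:three main cases for A2}, and then apply the translation and truncation moves established in Cases~1--3: sliding a vertex along a bounding wall preserves the relation $|\mathrm{Conv}(\cdot,v)| = d(\cdot,v)+1$ while only decreasing $|\mathrm{Conv}(\cdot,v,w)|$, as recorded in \eqref{translation inequality}--\eqref{monotonicity relation}, and the key inequality \eqref{key inequality} allows one to shave off the two truncated corners at a controlled cost. The upshot is that it suffices to prove \eqref{strong hull property} for the bi-truncated parallelogram of Fig.~\ref{final A2}.

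With the configuration normalized, I would install the Cartesian coordinates of Fig.~\ref{Cartesian coordinate}, so that $u=(0,0)$, $v=(x,y)$, and $w=(x+a,y+b)$, and invoke the chamber-counting formulas \eqref{coordinateformula2} and \eqref{coordinateformula1} together with the parity and non-crossing constraints $x\geq y-1$ and $a\geq b-1$. Since the $\bigtriangleup$ and $\bigtriangledown$ chambers are interchanged by a rotation of the tessellation, only the normal form in which $v$ is a $\bigtriangleup$ need be treated. This substitution turns \eqref{strong hull property} into precisely the algebraic inequality \eqref{finalineq} of Proposition~\ref{finalprop A2}, which is already established, thereby completing the argument.

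The hardest part, I expect, is not the final algebra but the \emph{exhaustiveness} of the reduction. The delicate points are verifying that sliding vertices along walls genuinely accounts for all vertex placements, including the degenerate hexagons and the situation treated in Case~3 where $v$ lies in the interior of $\mathrm{Conv}(u,v)$, and confirming that each slide truly satisfies the monotonicity \eqref{monotonicity relation} rather than merely doing so in the pictured instance. I would control this by enumerating the finitely many combinatorial types describing how the three bounding hexagons $\mathrm{Conv}(u,v)$, $\mathrm{Conv}(v,w)$, and $\mathrm{Conv}(u,v,w)$ can overlap, and checking that in each type the relevant wall is available to translate against; the rotational symmetry reduces this bookkeeping to the single normal form of Fig.~\ref{final A2}, after which Proposition~\ref{finalprop A2} closes the proof.
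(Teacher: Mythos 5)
Your proposal follows essentially the same route as the paper: the identification of convex hulls with (possibly degenerate) hexagons via Lemma~\ref{cannot cross wall twice} and Proposition~\ref{union is convex hull}, the translation and truncation reductions through the three cases down to the bi-truncated parallelogram of Fig.~\ref{final A2}, and the final appeal to the coordinate chamber-counting formulas and Proposition~\ref{finalprop A2}. Your closing observation that the exhaustiveness of the reduction is the delicate step is apt, as this is also the point at which the paper's own argument relies most heavily on the pictured configurations rather than an explicit enumeration.
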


\subsection{Affine type \texorpdfstring{$\widetilde{C}_2$}{C2}}\label{affine type C2}

The corresponding Cayley graph of $\widetilde{C}_2$ has already been discussed in Fig.~\ref{fig:reflection hyperplanes in Euclidean plane} and at the end of Section~\ref{Background}. Here, we apply reduction techniques for classification and computation, by methodology analogous to that employed in Section~\ref{affine type A2}. The reduction techniques employed differ in implementation from those applied to $\widetilde{A}_2$ configurations. We shall demonstrate this process by three examples.

\begin{example}\label{C2 example 1}
    Consider the geometric structure of the convex hull of $u$, $v$, and $w$ illstrated in Fig.~\ref{C2 example Translate $u$ to $u_1$}.

    \begin{figure}[htbp]
    \centering
    \resizebox{\textwidth}{!}{

    \begin{tikzpicture}[shorten >=1pt, node distance=2cm, auto]
            
            \fill[black!20] (0,0) -- (1,0) -- (0,1) -- cycle;
            \fill[black!20] (1,5) -- (2,5) -- (1,6) -- cycle;
            \fill[black!20] (9,2) -- (10,3) -- (9,3) -- cycle;
            \fill[black!20] (-4,0) -- (-3,0) -- (-4,1) -- cycle;
            \fill[black!20] (12,5) -- (13,6) -- (12,6) -- cycle;
            \fill[black!20, opacity=0.3] (0,0) -- (1,0) -- (2,1) -- (2,5) -- (1,6) -- (0,5) -- cycle;
            \fill[black!20, opacity=0.3] (1,6) -- (1,4) -- (3,2) -- (9,2) -- (10,3) -- (7,6) -- cycle;
            \fill[black!20, opacity=0.3] (-4,0) -- (-3,0) -- (2,5) -- (1,6) -- (-4,1) -- cycle;
            \fill[black!20, opacity=0.3] (1,6) -- (1,5) -- (12,5) -- (13,6) -- cycle;
            \fill[pattern=sparse bold dots, opacity=0.3] (0,5) -- (2,5) -- (2,1) -- (0,1) -- cycle;
            \fill[pattern=sparse bold dots, opacity=0.3] (0,5) -- (2,5) -- (-2,1) -- (-4,1) -- cycle;
            \fill[pattern=sparse bold lines, opacity=0.3] (2,5) -- (3,6) -- (10,3) -- (9,2) -- cycle;
            \fill[pattern=sparse bold lines, opacity=0.3] (2,5) -- (3,6) -- (13,6) -- (12,5) -- cycle;
            \draw (0,0) -- (1,0) -- (0,1) -- cycle;
            \draw (1,5) -- (2,5) -- (1,6) -- cycle;
            \draw (9,2) -- (10,3) -- (9,3) -- cycle;
            \draw (0,0) -- (7,0) -- (10,3) -- (7,6) -- (1,6) -- (0,5) -- cycle;
            \draw (-4,0) -- (-3,0) -- (-4,1) -- cycle;
            \draw (12,5) -- (13,6) -- (12,6) -- cycle;
            \draw (-4,1) -- (-4,0) -- (7,0) -- (13,6) -- (1,6) -- cycle;
            \draw[dashed] (0,5) -- (2,5) -- (2,1) -- (0,1) -- cycle;
            \draw[dashed] (2,5) -- (3,6) -- (10,3) -- (9,2) -- cycle;
            \draw[dashed] (0,5) -- (2,5) -- (-2,1) -- (-4,1) -- cycle;
            \draw[dashed] (2,5) -- (3,6) -- (13,6) -- (12,5) -- cycle;
            \node at (0.3,0.3) {$u$};
            \node at (1.3,5.3) {$v$};
            \node at (9.3,2.7) {$w$};
            \node at (-3.7,0.3) {$u_1$};
            \node at (12.3,5.7) {$w_1$};

        \end{tikzpicture}%
        }
    \caption{Translate $u$ to $u_1$, and $w$ to $w_1$}
    \label{C2 example Translate $u$ to $u_1$}
    \end{figure}
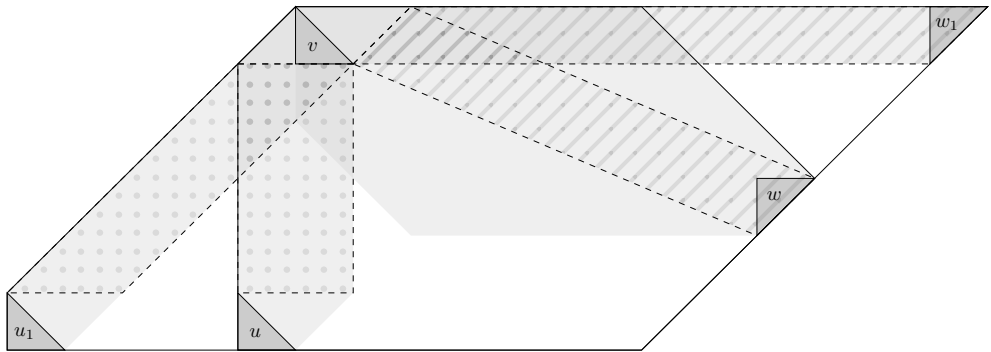

    The treatment of the case $\widetilde{C}_2$ differs from that in Section~\ref{affine type A2} in that direct estimation through the strong hull inequality~\eqref{strong hull property} by analyzing individual chambers within a gallery becomes intractable. Instead, we employ a geometric approach by interpreting each chamber as a unit area and utilizing the fundamental property that the base length and height determine the area of a parallelogram.

    As illustrated in Fig.~\ref{C2 example Translate $u$ to $u_1$}, translating $u$ leftward to $u_1$ allows comparison between $\mathrm{Conv}(u,v)$ and $\mathrm{Conv}(u_1,v)$. Through auxiliary constructions shown in the diagram, we observe that the quadrilateral regions shaded with dots in both $\mathrm{Conv}(u,v)$ and $\mathrm{Conv}(u_1,v)$ possess equal areas. This geometric equivalence implies that the cardinalities of chamber sets contained in the dotted regions are identical, thereby establishing $|\mathrm{Conv}(u,v)|=|\mathrm{Conv}(u_1,v)|$. Similarly, translating $w$ northeast to $w_1$ enables comparison between $\mathrm{Conv}(v,w)$ and $\mathrm{Conv}(v,w_1)$. The hatched quadrilateral regions bounded by auxiliary lines demonstrate equal areas. Let $\overline{\mathrm{Conv}}(w)$ denote the set of chambers that consists of the hatched quadrilateral region intersecting the interior of the convex hull that corresponds to $w$. We derive the inequality:
\begin{equation}\label{C2 ineq w->w1}
    |\mathrm{Conv}(v,w_1)|=3+|\overline{\mathrm{Conv}}(w_1)|\leq 3+|\overline{\mathrm{Conv}}(w)|\leq|\mathrm{Conv}(v,w)|.
\end{equation}
Furthermore, the translation process yields the monotonicity relation:
\begin{equation}\label{C2 ineq u1 v w1}
    |\mathrm{Conv}(u_1,v,w_1)|\geq|\mathrm{Conv}(u,v,w)|.
\end{equation}
To establish the strong hull inequality~\eqref{strong hull property} in this configuration, it suffices to verify:
\begin{equation}\label{C2 ineq u1 v v w1 u1 v w1}
    |\mathrm{Conv}(u_1,v)|\cdot|\mathrm{Conv}(v,w_1)|\geq|\mathrm{Conv}(u_1,v,w_1)|.
\end{equation}

    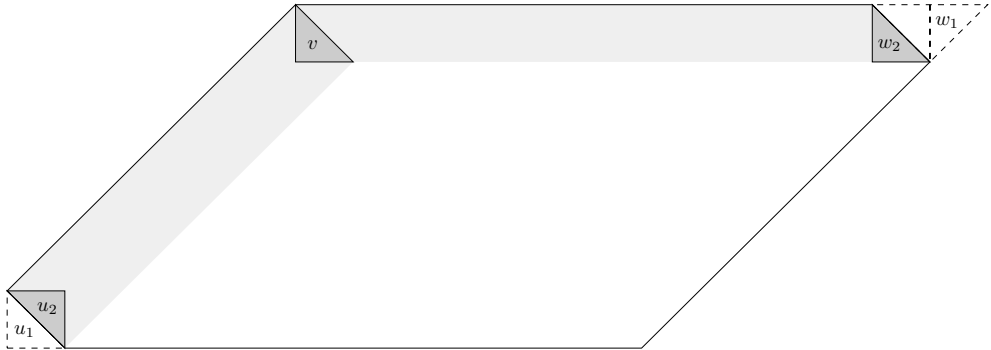
\begin{figure}[htbp]
    \centering
    \resizebox{\textwidth}{!}{

    \begin{tikzpicture}[shorten >=1pt, node distance=2cm, auto]
            
            \fill[black!20] (1,5) -- (2,5) -- (1,6) -- cycle;
            \fill[black!20] (-4,1) -- (-3,0) -- (-3,1) -- cycle;
            \fill[black!20] (11,5) -- (12,5) -- (11,6) -- cycle;
            \fill[black!20, opacity=0.3] (-4,1) -- (-3,0) -- (2,5) -- (1,6) -- cycle;
            \fill[black!20, opacity=0.3] (1,6) -- (1,5) -- (12,5) -- (11,6) -- cycle;
            \draw (1,5) -- (2,5) -- (1,6) -- cycle;
            \draw (-4,1) -- (-3,0) -- (-3,1) -- cycle;
            \draw (11,5) -- (12,5) -- (11,6) -- cycle;
            \draw (-4,1) -- (-3,0) -- (7,0) -- (12,5) -- (11,6) -- (1,6) -- cycle;
            \draw[dashed] (-4,0) -- (-3,0) -- (-4,1) -- cycle;
            \draw[dashed] (11,6) -- (12,5) -- (13,6) -- cycle;
            \draw[dashed] (12,6) -- (12,5) -- cycle;
            \node at (1.3,5.3) {$v$};
            \node at (-3.7,0.3) {$u_1$};
            \node at (12.3,5.7) {$w_1$};
            \node at (-3.3,0.7) {$u_2$};
            \node at (11.3,5.3) {$w_2$};

        \end{tikzpicture}%
        }
    \caption{Reduction of Example~\ref{C2 example 1}}
    \label{C2 example final reduced case}
    \end{figure}

Following the inequalities~\eqref{base inequality} and~\eqref{case2 u3w1 A2}, we apply reflection actions (potentially through multiple iterations) to the configurations in Fig.~\ref{C2 example Translate $u$ to $u_1$}, thereby transforming them into the geometric arrangement depicted in Fig.~\ref{C2 example final reduced case}. Analogous to the argumentation in Section~\ref{affine type A2}, the verification of inequality~\eqref{C2 ineq u1 v v w1 u1 v w1} reduces to establishing the following cardinality inequality:
\begin{equation}\label{C2 ineq u2 v v w2 u2 v w2}
    |\mathrm{Conv}(u_2,v)|\cdot|\mathrm{Conv}(v,w_2)|\geq|\mathrm{Conv}(u_2,v,w_2)|.
\end{equation}

\end{example}

\begin{example}\label{C2 example 2}
    Consider the geometric structure of the convex hull of $u$, $v$, and $w$ illstrated in Fig.~\ref{C2 example 2 Translate $u$ to $u_1$}. Employing methodology analogous to Example~\ref{C2 example 1}, we translate elements $u$ and $w$ to positions $u_1$ and $w_1$, respectively, as depicted in Fig.~\ref{C2 example 2 Translate $u$ to $u_1$}. It is noteworthy that $u_1$ may share the same orientation as $u$. In such cases, the analysis of corresponding convex hulls follows directly through the established methodology. However, to distinguish from previous discussions, we specifically verify the scenario where $u$ and $u_1$ exhibit distinct orientations. The core methodology remains consistent regardless of orientation parity.

    \begin{figure}[htbp]
    \centering

    \begin{tikzpicture}[shorten >=1pt, node distance=2cm, auto]
            
            \fill[black!20] (3,0) -- (4,1) -- (3,1) -- cycle;
            \fill[black!20] (0,6) -- (0,5) -- (1,6) -- cycle;
            \fill[black!20] (8,2) -- (8,3) -- (7,2) -- cycle;
            \fill[black!20] (0,1) -- (1,0) -- (1,1) -- cycle;
            \fill[black!20] (11,5) -- (11,6) -- (10,5) -- cycle;
            \fill[black!20, opacity=0.3] (1,6) -- (0,6) -- (0,3) -- (3,0) -- (4,1) -- (4,3) -- cycle;
            \fill[black!20, opacity=0.3] (0,6) -- (0,5) -- (3,2) -- (8,2) -- (8,3) -- (5,6) -- cycle;
            \fill[black!20, opacity=0.3] (0,6) -- (0,1) -- (1,0) -- (1,6) -- cycle;
            \fill[black!20, opacity=0.3] (0,6) -- (0,5) -- (11,5) -- (11,6) -- cycle;
            \fill[pattern=sparse bold dots, opacity=0.3] (0,5) -- (3,1) -- (4,1) -- (1,5) -- cycle;
            \fill[pattern=sparse bold dots, opacity=0.3] (0,5) -- (0,1) -- (1,1) -- (1,5) -- cycle;
            \fill[pattern=sparse bold lines, opacity=0.3] (0,5) -- (7,2) -- (8,3) -- (1,6) -- cycle;
            \fill[pattern=sparse bold lines, opacity=0.3] (0,5) -- (10,5) -- (11,6) -- (1,6) -- cycle;
            \draw (3,0) -- (4,1) -- (3,1) -- cycle;
            \draw (0,6) -- (0,5) -- (1,6) -- cycle;
            \draw (8,2) -- (8,3) -- (7,2) -- cycle;
            \draw (0,1) -- (1,0) -- (1,1) -- cycle;
            \draw (11,5) -- (11,6) -- (10,5) -- cycle;
            \draw (0,6) -- (0,3) -- (3,0) -- (7,0) -- (8,1) -- (8,3) -- (5,6) -- cycle;
            \draw (0,6) -- (0,1) -- (1,0) -- (7,0) -- (11,4) -- (11,6) -- cycle;
            \draw[dashed] (0,5) -- (3,1) -- (4,1) -- (1,5) -- cycle;
            \draw[dashed] (0,5) -- (0,1) -- (1,0) -- (1,5) -- cycle;
            \draw[dashed] (0,5) -- (7,2) -- (8,3) -- (1,6) -- cycle;
            \draw[dashed] (0,5) -- (10,5) -- (11,6) -- (1,6) -- cycle;
            \draw[dashed] (8,3) -- (10,5);
            \draw[dashed] (11,4) -- (10,5);
            \node at (0.3,5.7) {$v$};
            \node at (0.7,0.7) {$u_1$};
            \node at (3.3,0.7) {$u$};
            \node at (7.7,2.3) {$w$};
            \node at (10.7,5.3) {$w_1$};

        \end{tikzpicture}%
        
    \caption{Translate $u$ to $u_1$, and $w$ to $w_1$}
    \label{C2 example 2 Translate $u$ to $u_1$}
    \end{figure}
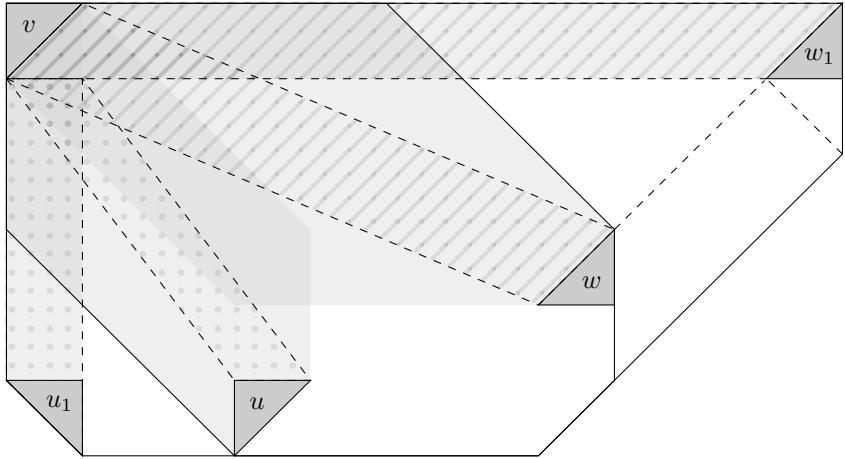

For the translation of $u$ to $u_1$ through iterative reflections, we establish the cardinality relation:
\begin{equation}\label{C2 example ineq u->u1}
    |\mathrm{Conv}(u_1,v)|=3+|\overline{\mathrm{Conv}}(u_1)|\leq3+|\overline{\mathrm{Conv}}(u)|\leq |\mathrm{Conv}(u,v)|.
\end{equation}
Similarly, for the transformation of $w$ to $w_1$, we derive:
\begin{equation}\label{C2 example ineq w->w1}
    |\mathrm{Conv}(v,w_1)|=3+|\overline{\mathrm{Conv}}(w_1)|\leq3+|\overline{\mathrm{Conv}}(w)|\leq |\mathrm{Conv}(v,w)|.
\end{equation}
Crucially, this translation process preserves the monotonicity of convex hull cardinalities:
\begin{equation}\label{C2 example u w u1 w1}
    |\mathrm{Conv}(u_1,v,w_1)|\geq |\mathrm{Conv}(u,v,w)|.
\end{equation}
Through systematic application of inequalities~\eqref{C2 example ineq u->u1},~\eqref{C2 example ineq w->w1}, and~\eqref{C2 example u w u1 w1}, the proof of our main theorem reduces to demonstrating:
\begin{equation}\label{C2 example ineq u1 v v w1 u1 v w1}
    |\mathrm{Conv}(u_1,v)|\cdot|\mathrm{Conv}(v,w_1)|\geq|\mathrm{Conv}(u_1,v,w_1)|.
\end{equation}
In fact, we may further reflect element $v$ to position $v_1$, as illustrated in Fig.~\ref{C2 example 2 final figure}. The critical inequality follows from:
\begin{equation}\label{C2 exmaple 2 final ineq}
\begin{aligned}
    |\mathrm{Conv}(u_1,v_1)|\cdot|\mathrm{Conv}(v_1,w_1)| &= (|\mathrm{Conv}(u_1,v)|-1)\cdot(|\mathrm{Conv}(v,w_1)|-1) \\
    &\geq (|\mathrm{Conv}(u_1,v,w_1)|-1) = |\mathrm{Conv}(u_1,v_1,w_1)|.
\end{aligned}
\end{equation}
This establishes the required implication for inequality~\eqref{C2 example ineq u1 v v w1 u1 v w1}. Consequently, the proof for this configuration reduces to verifying inequality~\eqref{C2 exmaple 2 final ineq}, thereby completing the reduction argument.

    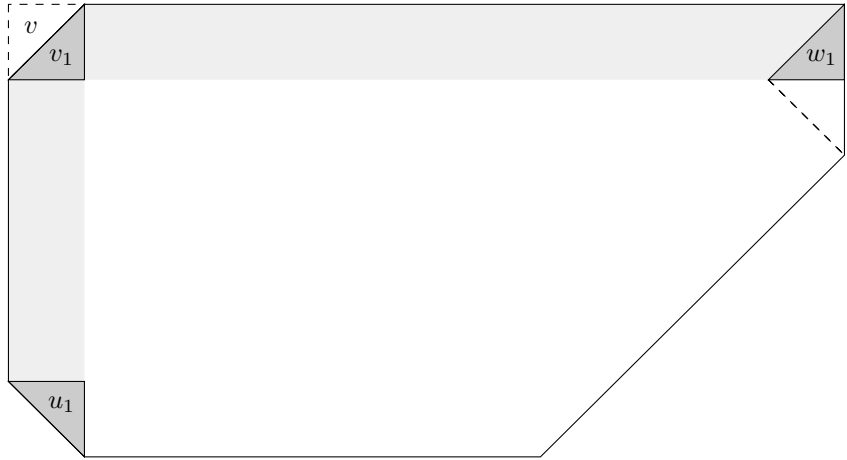
\begin{figure}[htbp]
    \centering

    \begin{tikzpicture}[shorten >=1pt, node distance=2cm, auto]
            
            \fill[black!20] (0,1) -- (1,0) -- (1,1) -- cycle;
            \fill[black!20] (0,5) -- (1,5) -- (1,6) -- cycle;
            \fill[black!20] (10,5) -- (11,5) -- (11,6) -- cycle;
            \fill[black!20, opacity=0.3] (1,6) -- (0,5) -- (0,1) -- (1,0) -- cycle;
            \fill[black!20, opacity=0.3] (1,6) -- (0,5) -- (11,5) -- (11,6) -- cycle;
            \draw (0,1) -- (1,0) -- (1,1) -- cycle;
            \draw (0,5) -- (1,5) -- (1,6) -- cycle;
            \draw (10,5) -- (11,5) -- (11,6) -- cycle;
            \draw (1,6) -- (0,5) -- (0,1) -- (1,0) -- (7,0) -- (11,4) -- (11,6) -- cycle;
            \draw[dashed] (0,6) -- (0,5) -- (1,6) -- cycle;
            \draw[dashed] (10,5) -- (11,4) -- cycle;
            \node at (0.3,5.7) {$v$};
            \node at (0.7,5.3) {$v_1$};
            \node at (0.7,0.7) {$u_1$};
            \node at (10.7,5.3) {$w_1$};

        \end{tikzpicture}%
        
    \caption{Reduction of Example~\ref{C2 example 2}}
    \label{C2 example 2 final figure}
    \end{figure}
\end{example}

\begin{example}\label{C2 exmaple 3}
    If the chamber $u$ in Fig.~\ref{C2 example 2 Translate $u$ to $u_1$} lies to the lower-left of $v$ rather than the lower-right, we apply distinct reduction techniques. To ensure a representative range of examples, we adjust the position of $u$; this modification is justified by the final reduced configuration in Fig.~\ref{C2 example 3 reduced} and the subsequent complete classification for $\widetilde{C}_2$.

    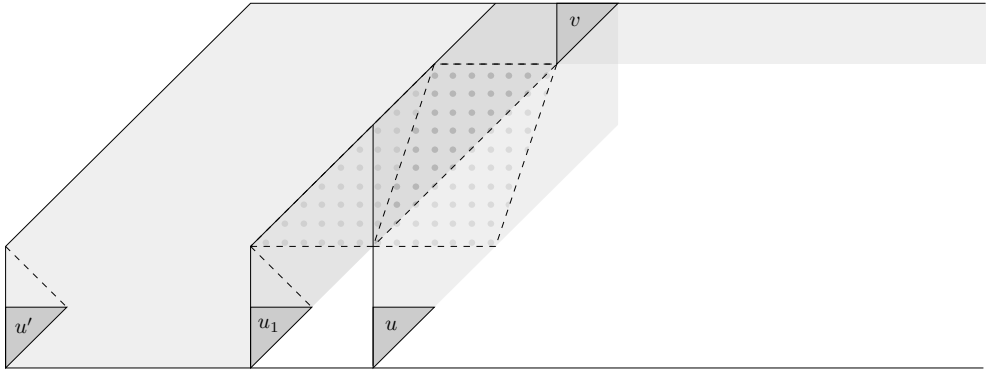
\begin{figure}[htbp]
    \centering
    \resizebox{\textwidth}{!}{

    \begin{tikzpicture}[shorten >=1pt, node distance=2cm, auto]
            
            \fill[black!20] (-3,0) -- (-3,1) -- (-2,1) -- cycle;
            \fill[black!20] (0,5) -- (0,6) -- (1,6) -- cycle;
            \fill[black!20] (-5,0) -- (-5,1) -- (-4,1) -- cycle;
            \fill[black!20] (-9,0) -- (-9,1) -- (-8,1) -- cycle;
            \fill[black!20, opacity=0.3] (-3,0) -- (1,4) -- (1,6) -- (-1,6) -- (-3,4) -- cycle;
            \fill[black!20, opacity=0.3] (0,6) -- (0,5) -- (7,5) -- (7,6) -- cycle;
            \fill[black!20, opacity=0.3] (-5,0) -- (1,6) -- (-1,6) -- (-5,2) -- cycle;
            \fill[black!20, opacity=0.3] (-9,0) -- (-5,0) -- (1,6) -- (-5,6) -- (-9,2) -- cycle;
            \fill[pattern=sparse bold dots, opacity=0.3] (-3,2) -- (-1,2) -- (0,5) -- (-2,5) -- cycle;
            \fill[pattern=sparse bold dots, opacity=0.3] (-5,2) -- (-3,2) -- (0,5) -- (-2,5) -- cycle;
            \draw (-3,0) -- (-3,1) -- (-2,1) -- cycle;
            \draw (0,5) -- (0,6) -- (1,6) -- cycle;
            \draw (-5,0) -- (-5,1) -- (-4,1) -- cycle;
            \draw (-9,0) -- (-9,1) -- (-8,1) -- cycle;
            \draw (7,6) -- (-5,6) -- (-9,2) -- (-9,0) -- (7,0);
            \draw (-1,6) -- (-5,2) -- (-5,0);
            \draw (-3,4) -- (-3,0);
            \draw[dashed] (-3,2) -- (-1,2) -- (0,5) -- (-2,5) -- cycle;
            \draw[dashed] (-5,2) -- (-3,2) -- (0,5) -- (-2,5) -- cycle;
            \draw[dashed] (-5,2) -- (-4,1);
            \draw[dashed] (-9,2) -- (-8,1);
            \node at (0.3,5.7) {$v$};
            \node at (-2.7,0.7) {$u$};
            \node at (-8.7,0.7) {$u'$};
            \node at (-4.7,0.7) {$u_1$};

        \end{tikzpicture}%
        }
    \caption{Reduction techniques for different positions of the chamber of $u$}
    \label{C2 example 3 reduced}
    \end{figure}

    Translating $u$ to $u_1$ via iterative reflections (Fig.~\ref{C2 example 3 reduced}) and employing the auxiliary line construction from Examples~\ref{C2 example 1} and~\ref{C2 example 2} yields the inequalities
\begin{equation}
|\mathrm{Conv}(u_1,v)| \leq |\mathrm{Conv}(u,v)| \quad \text{and} \quad |\mathrm{Conv}(u_1,v,w_1)| \leq |\mathrm{Conv}(u,v,w_1)|.
\end{equation}

Suppose $u$ is instead located at $u'$ in Fig.~\ref{C2 example 3 reduced}. By applying multiple reflections to translate it to $u_1$, we observe that  
\begin{equation}  
|\mathrm{Conv}(u',v)| - |\mathrm{Conv}(u_1,v)| = |\mathrm{Conv}(u',v,w_1)| - |\mathrm{Conv}(u_1,v,w_1)|.  
\end{equation}  

In either scenario mentioned above, to establish the strong hull inequality~\eqref{strong hull property}, it suffices to verify the inequality  
\begin{equation}  
|\mathrm{Conv}(u_1,v)| \cdot |\mathrm{Conv}(v,w_1)| \geq |\mathrm{Conv}(u_1,v,w_1)|.  
\end{equation}  
\end{example}

In Example~\ref{C2 exmaple 3}, $u$ may lie in the exterior of the reduced hull. While Example~\ref{C2 example 1} focused on the interior case, exterior chambers can be translated inward analogously. Consequently, arbitrary convex hulls reduce to the canonical configurations classified in Fig.~\ref{fig:all_C2_cases}.

\begin{figure}[htbp]
    \centering
    \noindent
    %
    \begin{minipage}[b]{0.31\textwidth}
        \centering
        \subfloat[Case 1 for $\widetilde{C}_2$]{%
        \label{fig:C2case1}
        \resizebox{\linewidth}{!}{%
            \begin{tikzpicture}[shorten >=1pt, baseline=(current bounding box.center)]
            \fill[black!20] (0,1) -- (1,0) -- (1,1) -- cycle;
            \fill[black!20] (9,5) -- (9,6) -- (10,5) -- cycle;
            \fill[black!20] (4,5) -- (5,5) -- (5,6) -- cycle;
            \fill[black!20] (6,5) -- (5,5) -- (5,6) -- cycle;
            \fill[black!20] (6,5) -- (6,6) -- (5,6) -- cycle;
            \fill[black!20] (6,5) -- (6,6) -- (7,6) -- cycle;
            \fill[black!20] (4,2) -- (4,3) -- (5,3) -- cycle;
            \draw (0,1) -- (1,0) -- (5,0) -- (10,5) -- (9,6) -- (5,6) -- cycle;
            \draw (0,1) -- (1,0) -- (1,1) -- cycle;
            \draw (9,5) -- (9,6) -- (10,5) -- cycle;
            \draw (4,5) -- (5,5) -- (5,6) -- cycle;
            \draw (6,5) -- (5,5) -- (5,6) -- cycle;
            \draw (6,5) -- (6,6) -- (5,6) -- cycle;
            \draw (6,5) -- (6,6) -- (7,6) -- cycle;
            \draw (4,2) -- (4,3) -- (5,3) -- cycle;
            \node at (0.7,0.7) {$u$};
            \node at (9.3,5.3) {$w$};
            \node at (4.7,5.3) {$v$};
            \node at (5.3,5.3) {$v$};
            \node at (5.7,5.7) {$v$};
            \node at (6.3,5.7) {$v$};
            \node at (4.3,2.7) {$v$};
            \end{tikzpicture}%
        }}
    \end{minipage}
    \hfill
    \begin{minipage}[b]{0.31\textwidth}
        \centering
        \subfloat[Case 2 for $\widetilde{C}_2$]{%
        \label{fig:C2case2}
        \resizebox{\linewidth}{!}{%
            \begin{tikzpicture}[shorten >=1pt, baseline=(current bounding box.center)]
            \fill[black!20] (0,1) -- (1,0) -- (1,1) -- cycle;
            \fill[black!20] (9,5) -- (10,6) -- (10,5) -- cycle;
            \fill[black!20] (4,5) -- (5,5) -- (5,6) -- cycle;
            \fill[black!20] (6,5) -- (5,5) -- (5,6) -- cycle;
            \fill[black!20] (6,5) -- (6,6) -- (5,6) -- cycle;
            \fill[black!20] (6,5) -- (6,6) -- (7,6) -- cycle;
            \fill[black!20] (5,2) -- (5,3) -- (6,3) -- cycle;
            \draw (0,1) -- (1,0) -- (6,0) -- (10,4) -- (10,6) -- (5,6) -- cycle;
            \draw (0,1) -- (1,0) -- (1,1) -- cycle;
            \draw (9,5) -- (10,6) -- (10,5) -- cycle;
            \draw (4,5) -- (5,5) -- (5,6) -- cycle;
            \draw (6,5) -- (5,5) -- (5,6) -- cycle;
            \draw (6,5) -- (6,6) -- (5,6) -- cycle;
            \draw (6,5) -- (6,6) -- (7,6) -- cycle;
            \draw (5,2) -- (5,3) -- (6,3) -- cycle;
            \draw[dashed] (9,5) -- (10,4);
            \node at (0.7,0.7) {$u$};
            \node at (9.7,5.3) {$w$};
            \node at (4.7,5.3) {$v$};
            \node at (5.3,5.3) {$v$};
            \node at (5.7,5.7) {$v$};
            \node at (6.3,5.7) {$v$};
            \node at (5.3,2.7) {$v$};
            \end{tikzpicture}%
        }}
    \end{minipage}
    \hfill
    \begin{minipage}[b]{0.31\textwidth}
        \centering
        \subfloat[Case 3 for $\widetilde{C}_2$]{%
        \label{fig:C2case3}
        \resizebox{\linewidth}{!}{%
            \begin{tikzpicture}[shorten >=1pt, baseline=(current bounding box.center)]
            \fill[black!20] (2,1) -- (1,0) -- (1,1) -- cycle;
            \fill[black!20] (11,5) -- (11,6) -- (10,5) -- cycle;
            \fill[black!20] (4,5) -- (5,5) -- (5,6) -- cycle;
            \fill[black!20] (6,5) -- (5,5) -- (5,6) -- cycle;
            \fill[black!20] (6,5) -- (6,6) -- (5,6) -- cycle;
            \fill[black!20] (6,5) -- (6,6) -- (7,6) -- cycle;
            \fill[black!20] (6,2) -- (6,3) -- (7,3) -- cycle;
            \draw (1,2) -- (1,0) -- (7,0) -- (11,4) -- (11,6) -- (5,6) -- cycle;
            \draw (2,1) -- (1,0) -- (1,1) -- cycle;
            \draw (11,5) -- (11,6) -- (10,5) -- cycle;
            \draw (4,5) -- (5,5) -- (5,6) -- cycle;
            \draw (6,5) -- (5,5) -- (5,6) -- cycle;
            \draw (6,5) -- (6,6) -- (5,6) -- cycle;
            \draw (6,5) -- (6,6) -- (7,6) -- cycle;
            \draw (6,2) -- (6,3) -- (7,3) -- cycle;
            \draw[dashed] (10,5) -- (11,4);
            \draw[dashed] (1,2) -- (2,1);
            \node at (1.3,0.7) {$u$};
            \node at (10.7,5.3) {$w$};
            \node at (4.7,5.3) {$v$};
            \node at (5.3,5.3) {$v$};
            \node at (5.7,5.7) {$v$};
            \node at (6.3,5.7) {$v$};
            \node at (6.3,2.7) {$v$};
            \end{tikzpicture}%
        }}
    \end{minipage}

    \vspace{0.5cm}
    \begin{minipage}[b]{0.31\textwidth}
        \centering
        \subfloat[Case 4 for $\widetilde{C}_2$]{%
        \label{fig:C2case4}
        \resizebox{\linewidth}{!}{%
            \begin{tikzpicture}[shorten >=1pt, baseline=(current bounding box.center)]
            \fill[black!20] (0,1) -- (1,0) -- (1,1) -- cycle;
            \fill[black!20] (0,5) -- (1,5) -- (1,6) -- cycle;
            \fill[black!20] (9,5) -- (9,6) -- (10,5) -- cycle;
            \draw (1,6) -- (0,5) -- (0,1) -- (1,0) -- (5,0) -- (10,5) -- (9,6) -- cycle;
            \draw (0,1) -- (1,0) -- (1,1) -- cycle;
            \draw (0,5) -- (1,5) -- (1,6) -- cycle;
            \draw (9,5) -- (9,6) -- (10,5) -- cycle;
            \node at (0.7,0.7) {$u$};
            \node at (0.7,5.3) {$v$};
            \node at (9.3,5.3) {$w$};
            \end{tikzpicture}%
        }}
    \end{minipage}
    \hfill
    \begin{minipage}[b]{0.31\textwidth}
        \centering
        \subfloat[Case 5 for $\widetilde{C}_2$]{%
        \label{fig:C2case5}
        \resizebox{\linewidth}{!}{%
            \begin{tikzpicture}[shorten >=1pt, baseline=(current bounding box.center)]
            \fill[black!20] (0,1) -- (1,0) -- (1,1) -- cycle;
            \fill[black!20] (0,5) -- (1,5) -- (1,6) -- cycle;
            \fill[black!20] (9,5) -- (10,6) -- (10,5) -- cycle;
            \draw (1,6) -- (0,5) -- (0,1) -- (1,0) -- (6,0) -- (10,4) -- (10,6) -- cycle;
            \draw (0,1) -- (1,0) -- (1,1) -- cycle;
            \draw (0,5) -- (1,5) -- (1,6) -- cycle;
            \draw (9,5) -- (10,6) -- (10,5) -- cycle;
            \draw[dashed] (9,5) -- (10,4);
            \node at (0.7,0.7) {$u$};
            \node at (0.7,5.3) {$v$};
            \node at (9.7,5.3) {$w$};
            \end{tikzpicture}%
        }}
    \end{minipage}
    \hfill
    \begin{minipage}[b]{0.31\textwidth}
        \centering
        \subfloat[Case 6 for $\widetilde{C}_2$]{%
        \label{fig:C2case6}
        \resizebox{\linewidth}{!}{%
            \begin{tikzpicture}[shorten >=1pt, baseline=(current bounding box.center)]
            \fill[black!20] (0,0) -- (0,1) -- (1,1) -- cycle;
            \fill[black!20] (0,5) -- (1,5) -- (1,6) -- cycle;
            \fill[black!20] (9,5) -- (10,6) -- (10,5) -- cycle;
            \draw (1,6) -- (0,5) -- (0,0) -- (6,0) -- (10,4) -- (10,6) -- cycle;
            \draw (0,0) -- (0,1) -- (1,1) -- cycle;
            \draw (0,5) -- (1,5) -- (1,6) -- cycle;
            \draw (9,5) -- (10,6) -- (10,5) -- cycle;
            \draw[dashed] (9,5) -- (10,4);
            \node at (0.3,0.7) {$u$};
            \node at (0.7,5.3) {$v$};
            \node at (9.7,5.3) {$w$};
            \end{tikzpicture}%
        }}
    \end{minipage}

    \vspace{0.5cm}
    \begin{minipage}[b]{0.31\textwidth}
        \centering
        \subfloat[Case 7 for $\widetilde{C}_2$]{%
        \label{fig:C2case7}
        \resizebox{\linewidth}{!}{%
            \begin{tikzpicture}[shorten >=1pt, baseline=(current bounding box.center)]
            \fill[black!20] (0,0) -- (1,0) -- (1,1) -- cycle;
            \fill[black!20] (0,5) -- (1,5) -- (1,6) -- cycle;
            \fill[black!20] (9,5) -- (9,6) -- (10,5) -- cycle;
            \draw (1,6) -- (0,5) -- (0,0) -- (5,0) -- (10,5) -- (9,6) -- cycle;
            \draw (0,0) -- (1,0) -- (1,1) -- cycle;
            \draw (0,5) -- (1,5) -- (1,6) -- cycle;
            \draw (9,5) -- (9,6) -- (10,5) -- cycle;
            \node at (0.7,0.3) {$u$};
            \node at (0.7,5.3) {$v$};
            \node at (9.3,5.3) {$w$};
            \end{tikzpicture}%
        }}
    \end{minipage}
    \hfill
    \begin{minipage}[b]{0.31\textwidth}
        \centering
        \subfloat[Case 8 for $\widetilde{C}_2$]{%
        \label{fig:C2case8}
        \resizebox{\linewidth}{!}{%
            \begin{tikzpicture}[shorten >=1pt, baseline=(current bounding box.center)]
            \fill[black!20] (0,0) -- (1,0) -- (1,1) -- cycle;
            \fill[black!20] (0,5) -- (1,5) -- (0,6) -- cycle;
            \fill[black!20] (0,6) -- (1,6) -- (1,5) -- cycle;
            \fill[black!20] (9,5) -- (9,6) -- (10,5) -- cycle;
            \draw (0,6) -- (0,0) -- (5,0) -- (10,5) -- (9,6) -- cycle;
            \draw (0,0) -- (1,0) -- (1,1) -- cycle;
            \draw (0,5) -- (1,5) -- (0,6) -- cycle;
            \draw (0,6) -- (1,6) -- (1,5) -- cycle;
            \draw (9,5) -- (9,6) -- (10,5) -- cycle;
            \node at (0.7,0.3) {$u$};
            \node at (0.3,5.3) {$v$};
            \node at (0.7,5.7) {$v$};
            \node at (9.3,5.3) {$w$};
            \end{tikzpicture}%
        }}
    \end{minipage}
    \hfill
    \begin{minipage}[b]{0.31\textwidth}
        \centering
        \subfloat[Case 9 for $\widetilde{C}_2$]{%
        \label{fig:C2case9}
        \resizebox{\linewidth}{!}{%
            \begin{tikzpicture}[shorten >=1pt, baseline=(current bounding box.center)]
            \fill[black!20] (0,0) -- (0,1) -- (1,1) -- cycle;
            \fill[black!20] (0,5) -- (1,5) -- (0,6) -- cycle;
            \fill[black!20] (0,6) -- (1,6) -- (1,5) -- cycle;
            \fill[black!20] (9,5) -- (10,6) -- (10,5) -- cycle;
            \draw (0,6) -- (0,0) -- (6,0) -- (10,4) -- (10,6) -- cycle;
            \draw (0,0) -- (0,1) -- (1,1) -- cycle;
            \draw (0,5) -- (1,5) -- (0,6) -- cycle;
            \draw (0,6) -- (1,6) -- (1,5) -- cycle;
            \draw (9,5) -- (10,6) -- (10,5) -- cycle;
            \draw[dashed] (9,5) -- (10,4);
            \node at (0.3,0.7) {$u$};
            \node at (0.3,5.3) {$v$};
            \node at (0.7,5.7) {$v$};
            \node at (9.7,5.3) {$w$};
            \end{tikzpicture}%
        }}
    \end{minipage}

    \caption{All reduced cases for $\widetilde{C}_2$}
    \label{fig:all_C2_cases}
\end{figure}

In Figs.~\ref{fig:C2case1}--\ref{fig:C2case3}, the orientation of an interior chamber $v$ is arbitrary among the four permissible directions. Generally, strategic reductions and rotations transform any configuration into the canonical cases in Fig.~\ref{fig:all_C2_cases}. It thus suffices to verify the strong hull inequality~\eqref{strong hull property} for these forms. We focus on the most intricate scenario, Fig.~\ref{fig:C2case2}; to explicitly demonstrate orientation independence, we analyze a variation (Fig.~\ref{C2case2'}) where $v$ adopts a distinct orientation. The verification methodology remains invariant regardless of such choices.

    \begin{figure}[htbp]
    \centering

    \begin{tikzpicture}[shorten >=1pt, node distance=2cm, auto]
            
            \fill[black!20] (0,1) -- (1,0) -- (1,1) -- cycle;
            \fill[black!20] (9,5) -- (10,6) -- (10,5) -- cycle;
            \fill[black!20] (5,2) -- (6,2) -- (5,3) -- cycle;
            \fill[black!20, opacity=0.3] (0,1) -- (1,0) -- (4,0) -- (6,2) -- (5,3) -- (2,3) -- cycle;
            \fill[black!20, opacity=0.3] (5,3) -- (5,2) -- (6,2) -- (8,2) -- (10,4) -- (10,6) -- (8,6) -- cycle;
            \draw (0,1) -- (1,0) -- (6,0) -- (10,4) -- (10,6) -- (5,6) -- cycle;
            \draw (0,1) -- (1,0) -- (1,1) -- cycle;
            \draw (9,5) -- (10,6) -- (10,5) -- cycle;
            \draw (5,2) -- (6,2) -- (5,3) -- cycle;
            \draw[dashed] (9,5) -- (10,4);
            \node at (0.7,0.7) {$u$};
            \node at (9.7,5.3) {$w$};
            \node at (5.3,2.3) {$v$};

        \end{tikzpicture}%
        
    \caption{A case for $\widetilde{C}_2$ of Fig.~\ref{fig:C2case2}}
    \label{C2case2'}
    \end{figure}

Let $v$ correspond to the chamber located at the $a$-th position from left to right in the $b$-th row (counting from bottom upwards) of the convex hull in Fig.~\ref{C2case2'}, and let $w$ correspond to the chamber at the $a$-th position in the $y$-th row. For $v$ and $w$ to align with the orientations shown in Fig.~\ref{C2case2'}, the congruence conditions $a \equiv 2 \pmod{4}$ and $x \equiv 1 \pmod{4}$ must hold. Furthermore, the inequalities $x \geq a + 3$ and $y > b \geq 2$ are required to ensure the convex hull formed by $u$, $v$, and $w$ matches the configuration in Fig.~\ref{C2case2'}.

Following the analytical framework for $\widetilde{A}_2$ in Section~\ref{affine type A2}, we calculate the cardinalities of chambers in the respective convex hulls:
\begin{equation}
    \begin{aligned}
        |\mathrm{Conv}(u,v)|&= a + (b-2)(a+2) + a\\
        &= 2a + (b-2)(a+2),
    \end{aligned}
\end{equation}
for the convex hull of $u$ and $v$;
\begin{equation}
    \begin{aligned}
        |\mathrm{Conv}(v,w)| &= (x-a+4) + (y-b-2)(x-a+5) + (x-a+4) + (x-a+2)\\
        &= 3(x-a+3) + (y-b-2)(x-a+5),
    \end{aligned}
\end{equation}
for the convex hull of $v$ and $w$; and
\begin{equation}
    \begin{aligned}
        |\mathrm{Conv}(u,v,w)|&= (x+1) + (x+2) + (y-3)(x+3) + x\\
        &= 3(x+1) + (y-3)(x+3),
    \end{aligned}
\end{equation}
for the combined convex hull. The verification of the strong hull inequality~\eqref{strong hull property} under these configurations reduces to proving Proposition~\ref{finalprop trans C2}, as outlined in the below.

\begin{proposition}\label{finalprop trans C2}
    Let positive integers $a$, $b$, $x$, $y$ satisfy:
    \begin{enumerate}[(i)]
        \item $a \equiv 2 \pmod{4}$ and $x \equiv 1 \pmod{4}$,
        \item $y > b \geq 2$,
        \item $x \geq a + 3$.
    \end{enumerate}
    Then the inequality
    \begin{equation}\label{finalineq trans C2}
        \begin{aligned}
        \big[2a &+ (b - 2)(a + 2)\big]\big[3(x - a + 3) + (y - b - 2)(x - a + 5)\big] \\
        &\geq 3(x + 1) + (y - 3)(x + 3)
        \end{aligned}
    \end{equation}
    holds under the given constraints.
\end{proposition}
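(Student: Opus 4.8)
The plan is to collapse the four–variable inequality \eqref{finalineq trans C2} to a single extremal corner by a monotonicity reduction. First I would pass to the shifted variables $p := x - a$ and $q := y - b$, so that constraints (ii)--(iii) become simply $p \geq 3$ and $q \geq 1$, while (i) forces $a \geq 2$ and $b \geq 2$. A short expansion then rewrites the three cardinalities appearing in the proposition as
\[
|\mathrm{Conv}(u,v)| = ab + 2b - 4, \qquad |\mathrm{Conv}(v,w)| = pq + p + 5q - 1,
\]
\[
|\mathrm{Conv}(u,v,w)| = ab + aq + pb + pq + 3b + 3q - 6,
\]
so the claim becomes $f(a,b,p,q) \geq 0$, where
\[
f(a,b,p,q) := (ab+2b-4)(pq+p+5q-1) - (ab+aq+pb+pq+3b+3q-6).
\]
It is worth noting that the congruence conditions in (i) play no role in the algebraic inequality: I would prove $f \geq 0$ on the entire real box $a,b \geq 2$, $p \geq 3$, $q \geq 1$, so the restriction to the relevant sublattice is automatic.

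The key observation is that $f$ is affine in $p$ (with the other variables fixed) and affine in $q$, so the discrete differences are constant coefficients. Concretely, the coefficient of $p$ in $f$ is $(ab+2b-4)(q+1) - (b+q)$ and the coefficient of $q$ is $(ab+2b-4)(p+5) - (a+p+3)$. I would show both are strictly positive throughout the region using only the crude bounds $ab + 2b - 4 \geq 2b$ and $ab + 2b - 4 \geq 2a$, each an immediate consequence of $a,b \geq 2$: for the first, $2b(q+1) - (b+q) = q(2b-1) + b > 0$, and for the second, $2a(p+5) - (a+p+3) = p(2a-1) + 9a - 3 > 0$. Thus $f$ is nondecreasing in both $p$ and $q$, and its minimum over the region is attained at the corner $p = 3$, $q = 1$. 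This is the step that carries the real content, since it replaces an unbounded search over $x$ and $y$ by a finite check; I expect it to be the main obstacle only in the sense that one must resist case-splitting and instead locate the two uniform bounds $L_1 \geq 2a$ and $L_1 \geq 2b$ that drive both monotonicities at once.

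Finally, setting $p = 3$ and $q = 1$ collapses $f$ to $9ab + 14b - a - 40$, which is visibly increasing in both $a$ and $b$ (its $a$–coefficient is $9b - 1 > 0$ and its $b$–coefficient is $9a + 14 > 0$); its minimum on $a,b \geq 2$ therefore occurs at $a = b = 2$, where it equals $22 > 0$. Tracing back, the tight corner $(a,b,p,q) = (2,2,3,1)$ corresponds to $(a,b,x,y) = (2,2,5,3)$, which satisfies all the hypotheses, so no admissible configuration is excluded by the reduction. This establishes \eqref{finalineq trans C2}, and no genuinely delicate estimate is needed, because the product $|\mathrm{Conv}(u,v)| \cdot |\mathrm{Conv}(v,w)|$ dominates $|\mathrm{Conv}(u,v,w)|$ with substantial slack everywhere except at this single corner.
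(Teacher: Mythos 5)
Your proof is correct, and it takes a somewhat different route from the paper's. The paper parametrizes the constraints away entirely, writing $a=4n+2$, $b=k+2$, $x=4n+4q+5$, $y=k+p+3$ with $n,k,p,q\geq 0$, and then expands $\mathrm{LHS}-\mathrm{RHS}$ into a polynomial in $n,k,p,q$ all of whose coefficients are positive (with constant term $22$), which is a one-shot positivity certificate. You instead shift only $x$ and $y$ (via $p=x-a$, $q=y-b$), observe that the difference $f$ is affine in each of $p$ and $q$ with positive coefficients throughout the box, reduce to the corner $p=3$, $q=1$, and then repeat the monotonicity argument in $a$ and $b$ to land on the single point $(2,2,5,3)$, where the value is again $22$ -- a reassuring consistency check against the paper's constant term. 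Your version has the advantage of working over the real box $a,b\geq 2$, $p\geq 3$, $q\geq 1$, making explicit that the congruence conditions in (i) are irrelevant to the inequality, and it avoids the large expansion; the paper's version avoids any monotonicity lemma at the cost of that expansion. One trivial slip: you attribute $b\geq 2$ to hypothesis (i), but it comes from (ii); this does not affect the argument.
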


\begin{proof}
Let $a=4n+2$ and $x=4m+1$ with integers $n$, $m \geq 0$. Under the condition (iii), we have $m \geq n+1$, which allows us to set $m = n + q + 1$ where $q \geq 0$. Similarly, define $b = k + 2$ and $y = k + p + 3$ with $k,p \geq 0$. This yields
\begin{equation}
    x = 4(n + q + 1) + 1 = 4n + 4q + 5.
\end{equation}

To establish the target inequality~\eqref{finalineq trans C2}, we compute the difference between the left-hand side and the right-hand side by substituting the parameterized variables:
\begin{equation}\label{Finalineq prop C2}
    \begin{aligned}
        \mathrm{LHS} - \mathrm{RHS} &= 16knpq+32knp+32knq+36kn+32npq+64nq \\
        &\quad +60np+68n+16kpq+32kp+28kq+32k \\
        &\quad +12pq+24p+20q+22.
    \end{aligned}
\end{equation}

Observe that all coefficients in the polynomial~\eqref{Finalineq prop C2} are strictly positive, while all variables $k,n,p,q$ are non-negative integers. This immediately implies $\mathrm{LHS} - \mathrm{RHS} \geq 0$, and consequently $\mathrm{LHS} \geq \mathrm{RHS}$ as required.
\end{proof}

The remaining cases of type $\widetilde{C}_2$ configurations in Fig.~\ref{fig:all_C2_cases} have been systematically verified through analogous computational procedures, following the established methodology detailed in the preceding arguments. We have thus established Conjecture~\ref{strong hull conj} for Coxeter groups of type $\widetilde{C}_2$.

\begin{theorem}[Strong hull property for type $\widetilde{C}_2$]\label{C2 strong hull property}
    The Cayley graph of affine type $\widetilde{C}_2$ has the strong hull property.
\end{theorem}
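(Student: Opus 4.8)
The plan is to mirror the strategy already executed for $\widetilde{A}_2$ in Section~\ref{affine type A2}, now adapted to the square-based tessellation underlying $\widetilde{C}_2$. First I would invoke Proposition~\ref{union is convex hull} together with Lemma~\ref{cannot cross wall twice} to identify every convex hull $\mathrm{Conv}(u,v)$ with the union of all minimal galleries joining the chambers $u$ and $v$; geometrically this is a convex, wall-bounded (possibly degenerate) polygon whose chamber count is obtained by the base-times-height area bookkeeping developed in Examples~\ref{C2 example 1}--\ref{C2 exmaple 3}. The point of this reformulation is to convert the combinatorial inequality \eqref{strong hull property} into a statement about polygon areas and lattice-chamber counts.

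Next I would carry out the reduction step. Using the reflection-based translations of Examples~\ref{C2 example 1}, \ref{C2 example 2}, and \ref{C2 exmaple 3}, any triple $(u,v,w)$ is moved so that the three chambers occupy extreme corners of the relevant hulls, with the two factors $|\mathrm{Conv}(u,v)|$ and $|\mathrm{Conv}(v,w)|$ either preserved or decreased and $|\mathrm{Conv}(u,v,w)|$ never decreasing, exactly as recorded in \eqref{C2 ineq w->w1}--\eqref{C2 ineq u1 v w1} and their analogues. The essential claim is that these moves terminate in the finite list of canonical configurations enumerated in Fig.~\ref{fig:all_C2_cases}, so it suffices to verify \eqref{strong hull property} in each of the nine cases, and moreover that the four possible orientations of the interior chamber $v$ (cf.\ Figs.~\ref{fig:C2case1}--\ref{fig:C2case3}) do not affect the outcome.

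Then, for each canonical case I would place coordinates on the chambers, as in the $\widetilde{A}_2$ formulas \eqref{coordinateformula2}--\eqref{coordinateformula1}, and record closed-form polynomial expressions for $|\mathrm{Conv}(u,v)|$, $|\mathrm{Conv}(v,w)|$, and $|\mathrm{Conv}(u,v,w)|$ in the positional parameters, subject to the parity and range constraints forced by chamber orientation. The strong hull inequality becomes a concrete polynomial inequality, and the template is Proposition~\ref{finalprop trans C2}: after a substitution clearing the parity constraints (writing $a=4n+2$, $x=4m+1$, $b=k+2$, $y=k+p+3$ and using $m\geq n+1$), the quantity $\mathrm{LHS}-\mathrm{RHS}$ expands to a polynomial in the non-negative integers $k,n,p,q$ with manifestly positive coefficients, forcing non-negativity.

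The hard part will be the reduction step rather than the final algebra. Proving that every one of the nine canonical forms is genuinely reachable, and that each reflection preserves the required area identities while the non-crossing constraints from the bounding walls are faithfully encoded in the parameter ranges, demands careful case-by-case bookkeeping; the orientation-independence claim in particular must be checked rather than assumed. Once the correct polynomial identities are set up for the remaining eight cases, the positivity argument of Proposition~\ref{finalprop trans C2} applies verbatim, and assembling the nine verifications establishes that $\mathrm{Cay}(\widetilde{C}_2)$ satisfies the strong hull property.
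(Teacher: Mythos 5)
Your proposal follows essentially the same route as the paper: reduce an arbitrary triple $(u,v,w)$ via the reflection-based translations of Examples~\ref{C2 example 1}--\ref{C2 exmaple 3} to the canonical configurations of Fig.~\ref{fig:all_C2_cases}, then verify the strong hull inequality in each case by explicit chamber-counting formulas and a polynomial positivity argument of the type in Proposition~\ref{finalprop trans C2}. Your closing caveat --- that the termination of the reduction, the orientation-independence of $v$, and the remaining eight cases require explicit case-by-case verification --- accurately identifies the same points the paper itself leaves to ``analogous computational procedures.''
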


\subsection{Affine type \texorpdfstring{$\widetilde{G}_2$}{G2}}

The Coxeter graph for type $\widetilde{G}_2$ is given in Tab.~\ref{Affine irreducible Coxeter groups}. While direct analysis of its Cayley graph appears intractable, the dual reflection hyperplane arrangement (Fig.~\ref{reflection hyperplane of G2}) offers crucial simplifications. Although $\widetilde{G}_2$ exhibits reduced symmetry compared to $\widetilde{A}_2$, making a direct classification strategy (analogous to Sections~\ref{affine type A2} and~\ref{affine type C2}) arduous, this asymmetry affords structural flexibility. By exploiting this, we derive the strong hull property for $\widetilde{G}_2$ by leveraging the established results for $\widetilde{A}_2$. Specifically, verification of Example~\ref{G2 example 1} demonstrates that the result for $\widetilde{G}_2$ arises as a direct corollary of the $\widetilde{A}_2$ case.

\begin{example}\label{G2 example 1}
    As illustrated in Fig.~\ref{fig.G2 example 1}, the chambers corresponding to elements $u$, $v$, and $w$ are positioned at the reflection hyperplanes of type $\widetilde{G}_2$. The convex hull $\mathrm{Conv}(u,v,w)$ is bounded by thin solid lines, while $\mathrm{Conv}(u,v)$ and $\mathrm{Conv}(v,w)$ are highlighted with shaded regions. Notably, the chambers associated with $u$ and $v$ may reside either entirely within a regular triangle or have a nearest regular triangle in proximity. We select the regular triangle that lies strictly inside the convex hull generated by $u$ and $v$, which is emphasized by thick solid lines in Fig.~\ref{fig.G2 example 1}.
    
    \begin{figure}[htbp]
    \centering
    \resizebox{\textwidth}{!}{

    \begin{tikzpicture}[shorten >=1pt, node distance=2cm, auto]
            
            \fill[black!20] (0,0) -- (1,0) -- (0,{sqrt(3)}) -- cycle;
            \fill[black!20] (-3,0) -- (-3,{sqrt(3)}) -- (-4,{sqrt(3)}) -- cycle;
            \fill[black!20] (2,{7*sqrt(3)}) -- (3,{7*sqrt(3)}) -- (3,{6*sqrt(3)}) -- cycle;
            \fill[black!20] (12,{3*sqrt(3)}) -- (14,{3*sqrt(3)}) -- (13.5,{3.5*sqrt(3)}) -- cycle;
            \fill[black!20] (15,{6*sqrt(3)}) -- (17,{6*sqrt(3)}) -- (16.5,{6.5*sqrt(3)}) -- cycle;
            \fill[black!20, opacity=0.3] (3,{7*sqrt(3)}) -- (2,{7*sqrt(3)}) -- (0,{5*sqrt(3)}) -- (0,0) -- (1,0) -- (3,{2*sqrt(3)}) -- cycle;
            \fill[black!20, opacity=0.3] (2,{7*sqrt(3)}) -- (3,{6*sqrt(3)}) -- (13.5,{2.5*sqrt(3)}) -- (14,{3*sqrt(3)}) -- (12,{5*sqrt(3)}) -- (6,{7*sqrt(3)}) -- cycle;
            \fill[black!20, opacity=0.3] (-4,{sqrt(3)}) -- (-3,0) -- (3,{6*sqrt(3)}) -- (3,{7*sqrt(3)}) -- (2,{7*sqrt(3)}) -- cycle;
            \fill[black!20, opacity=0.3] (2,{7*sqrt(3)}) -- (3,{6*sqrt(3)}) -- (17,{6*sqrt(3)}) -- (16,{7*sqrt(3)}) -- cycle;
            
            \draw (0,0) -- (1,0) -- (0,{sqrt(3)}) -- cycle;
            \draw (-3,0) -- (-3,{sqrt(3)}) -- (-4,{sqrt(3)}) -- cycle;
            \draw (2,{7*sqrt(3)}) -- (3,{7*sqrt(3)}) -- (3,{6*sqrt(3)}) -- cycle;
            \draw (12,{3*sqrt(3)}) -- (14,{3*sqrt(3)}) -- (13.5,{3.5*sqrt(3)}) -- cycle;
            \draw (15,{6*sqrt(3)}) -- (17,{6*sqrt(3)}) -- (16.5,{6.5*sqrt(3)}) -- cycle;
            \draw (-4,{sqrt(3)}) -- (-3,0) -- (3,0) -- (15,{4*sqrt(3)}) -- (17,{6*sqrt(3)}) -- (16,{7*sqrt(3)}) -- (2,{7*sqrt(3)}) -- cycle;
            \draw (0,{5*sqrt(3)}) -- (0,0) -- (9,0) -- (12,{sqrt(3)}) -- (14,{3*sqrt(3)}) -- (12,{5*sqrt(3)}) -- (6,{7*sqrt(3)}) -- (2,{7*sqrt(3)}) -- cycle;
            
            \draw[dashed] (-4,0) -- (17,0);
            \draw[dashed] (-4,{sqrt(3)}) -- (17,{sqrt(3)});
            \draw[dashed] (-4,{2*sqrt(3)}) -- (17,{2*sqrt(3)});
            \draw[dashed] (-4,{3*sqrt(3)}) -- (17,{3*sqrt(3)});
            \draw[dashed] (-4,{4*sqrt(3)}) -- (17,{4*sqrt(3)});
            \draw[dashed] (-4,{5*sqrt(3)}) -- (17,{5*sqrt(3)});
            \draw[dashed] (-4,{6*sqrt(3)}) -- (17,{6*sqrt(3)});
            \draw[dashed] (-4,{7*sqrt(3)}) -- (17,{7*sqrt(3)});
            \draw[dashed] (-3,0) -- (-3,{7*sqrt(3)});
            \draw[dashed] (0,0) -- (0,{7*sqrt(3)});
            \draw[dashed] (3,0) -- (3,{7*sqrt(3)});
            \draw[dashed] (6,0) -- (6,{7*sqrt(3)});
            \draw[dashed] (9,0) -- (9,{7*sqrt(3)});
            \draw[dashed] (12,0) -- (12,{7*sqrt(3)});
            \draw[dashed] (15,0) -- (15,{7*sqrt(3)});
            \draw[dashed] (-4,{sqrt(3)}) -- (-3,0);
            \draw[dashed] (-4,{3*sqrt(3)}) -- (-1,0);
            \draw[dashed] (-4,{5*sqrt(3)}) -- (1,0);
            \draw[dashed] (-4,{7*sqrt(3)}) -- (3,0);
            \draw[dashed] (-2,{7*sqrt(3)}) -- (5,0);
            \draw[dashed] (0,{7*sqrt(3)}) -- (7,0);
            \draw[dashed] (2,{7*sqrt(3)}) -- (9,0);
            \draw[dashed] (4,{7*sqrt(3)}) -- (11,0);
            \draw[dashed] (6,{7*sqrt(3)}) -- (13,0);
            \draw[dashed] (8,{7*sqrt(3)}) -- (15,0);
            \draw[dashed] (10,{7*sqrt(3)}) -- (17,0);
            \draw[dashed] (12,{7*sqrt(3)}) -- (17,{2*sqrt(3)});
            \draw[dashed] (14,{7*sqrt(3)}) -- (17,{4*sqrt(3)});
            \draw[dashed] (16,{7*sqrt(3)}) -- (17,{6*sqrt(3)});
            \draw[dashed] (-4,{5*sqrt(3)}) -- (-2,{7*sqrt(3)});
            \draw[dashed] (-4,{3*sqrt(3)}) -- (0,{7*sqrt(3)});
            \draw[dashed] (-4,{sqrt(3)}) -- (2,{7*sqrt(3)});
            \draw[dashed] (-3,0) -- (4,{7*sqrt(3)});
            \draw[dashed] (-1,0) -- (6,{7*sqrt(3)});
            \draw[dashed] (1,0) -- (8,{7*sqrt(3)});
            \draw[dashed] (3,0) -- (10,{7*sqrt(3)});
            \draw[dashed] (5,0) -- (12,{7*sqrt(3)});
            \draw[dashed] (7,0) -- (14,{7*sqrt(3)});
            \draw[dashed] (9,0) -- (16,{7*sqrt(3)});
            \draw[dashed] (11,0) -- (17,{6*sqrt(3)});
            \draw[dashed] (13,0) -- (17,{4*sqrt(3)});
            \draw[dashed] (15,0) -- (17,{2*sqrt(3)});
            \draw[dashed] (-4,{sqrt(3)/3}) -- (-3,0);
            \draw[dashed] (-4,{2*sqrt(3)+sqrt(3)/3}) -- (3,0);
            \draw[dashed] (-4,{4*sqrt(3)+sqrt(3)/3}) -- (9,0);
            \draw[dashed] (-4,{6*sqrt(3)+sqrt(3)/3}) -- (15,0);
            \draw[dashed] (0,{7*sqrt(3)}) -- (17,{sqrt(3)+sqrt(3)/3});
            \draw[dashed] (6,{7*sqrt(3)}) -- (17,{3*sqrt(3)+sqrt(3)/3});
            \draw[dashed] (12,{7*sqrt(3)}) -- (17,{5*sqrt(3)+sqrt(3)/3});
            \draw[dashed] (-4,{6*sqrt(3)-sqrt(3)/3}) -- (0,{7*sqrt(3)});
            \draw[dashed] (-4,{4*sqrt(3)-sqrt(3)/3}) -- (6,{7*sqrt(3)});
            \draw[dashed] (-4,{2*sqrt(3)-sqrt(3)/3}) -- (12,{7*sqrt(3)});
            \draw[dashed] (-3,0) -- (17,{7*sqrt(3)-sqrt(3)/3});
            \draw[dashed] (3,0) -- (17,{5*sqrt(3)-sqrt(3)/3});
            \draw[dashed] (9,0) -- (17,{3*sqrt(3)-sqrt(3)/3});
            \draw[dashed] (15,0) -- (17,{sqrt(3)-sqrt(3)/3});
            
            \draw[line width=4pt] (-4,{sqrt(3)}) -- (-3,0) -- (-2,{sqrt(3)}) -- cycle;
            \draw[line width=4pt] (0,{sqrt(3)}) -- (1,0) -- (2,{sqrt(3)}) -- cycle;
            \draw[line width=4pt] (12,{3*sqrt(3)}) -- (13,{4*sqrt(3)}) -- (14,{3*sqrt(3)}) -- cycle;
            \draw[line width=4pt] (15,{6*sqrt(3)}) -- (16,{7*sqrt(3)}) -- (17,{6*sqrt(3)}) -- cycle;
            \draw[line width=4pt] (2,{7*sqrt(3)}) -- (4,{7*sqrt(3)}) -- (3,{6*sqrt(3)}) -- cycle;
            
            \draw[dashed, line width=4pt, dash pattern=on 8pt off 4pt] (-4,{sqrt(3)}) -- (-2,{sqrt(3)}) -- (3,{6*sqrt(3)}) -- (1,{6*sqrt(3)}) -- cycle;
            \draw[dashed, line width=4pt, dash pattern=on 8pt off 4pt] (0,{sqrt(3)}) -- (2,{sqrt(3)}) -- (3,{6*sqrt(3)}) -- (1,{6*sqrt(3)}) -- cycle;
            \draw[dashed, line width=4pt, dash pattern=on 8pt off 4pt] (3,{6*sqrt(3)}) -- (4,{7*sqrt(3)}) -- (16,{7*sqrt(3)}) -- (15,{6*sqrt(3)}) -- cycle;
            \draw[dashed, line width=4pt, dash pattern=on 8pt off 4pt] (3,{6*sqrt(3)}) -- (4,{7*sqrt(3)}) -- (13,{4*sqrt(3)}) -- (12,{3*sqrt(3)}) -- cycle;
            
            \fill[pattern=sparse bold dots, opacity=0.3] (-4,{sqrt(3)}) -- (-2,{sqrt(3)}) -- (3,{6*sqrt(3)}) -- (1,{6*sqrt(3)}) -- cycle;
            \fill[pattern=sparse bold dots, opacity=0.3] (0,{sqrt(3)}) -- (2,{sqrt(3)}) -- (3,{6*sqrt(3)}) -- (1,{6*sqrt(3)}) -- cycle;
            \fill[pattern=sparse bold lines, opacity=0.3] (3,{6*sqrt(3)}) -- (4,{7*sqrt(3)}) -- (16,{7*sqrt(3)}) -- (15,{6*sqrt(3)}) -- cycle;
            \fill[pattern=sparse bold lines, opacity=0.3] (3,{6*sqrt(3)}) -- (4,{7*sqrt(3)}) -- (13,{4*sqrt(3)}) -- (12,{3*sqrt(3)}) -- cycle;
            
            \node at ({1/3},{sqrt(3)/3}) {$u$};
            \node at ({-3-1/3},{sqrt(3)-sqrt(3)/3}) {$u_1$};
            \node at ({3-1/3},{7*sqrt(3)-sqrt(3)/3}) {$v$};
            \node at ({14-5/6},{3*sqrt(3)+sqrt(3)/6}) {$w$};
            \node at ({17-5/6},{6*sqrt(3)+sqrt(3)/6}) {$w_1$};

        \end{tikzpicture}%
        }
    \caption{An example of the convex hull in affine type $\widetilde{G}_2$}
    \label{fig.G2 example 1}
    \end{figure}
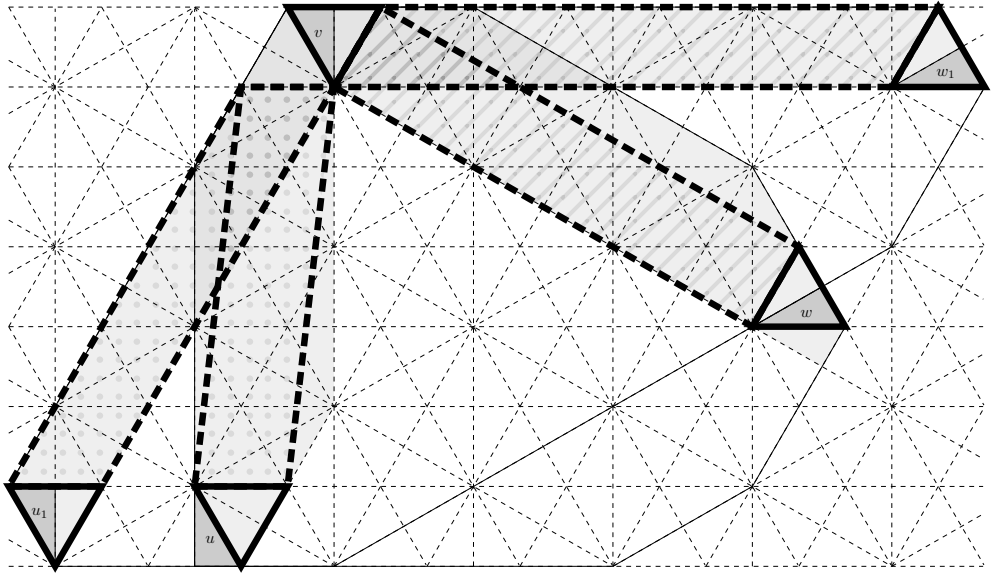

    The obtained regular triangle is then translated along the boundary of $\mathrm{Conv}(u,v,w)$ to the designated position following the direction shown in Fig.~\ref{fig.G2 example 1}. We denote the chambers located farther from $v$ in the resulting regular triangle as $u_1$ and $v_1$. Applying the method from Section~\ref{affine type C2} to draw construction lines, we shade the resulting parallelogram. Each chamber is still treated as a unit area, e.g., $\mathrm{Area}(u)=1$. First analyzing the two parallelograms shaded with dots that share equal areas, we notice that $\mathrm{Area}(P_u)=\mathrm{Area}(P_{u_1})$. Observing that the regular triangle has $\mathrm{Area}(T)=2$, the symmetry of $\mathrm{Conv}(u,v)$ yields the inequality:
\begin{equation}\label{G2 example 1 ineq1}
    |\mathrm{Conv}(u_1,v)|=5+\mathrm{Area}(P_{u_1})\leq \mathrm{Area}(u)+5+\mathrm{Area}(P_{u})+1\leq |\mathrm{Conv}(u,v)|.
\end{equation}
Similarly, $|\mathrm{Conv}(u_1,v)|\leq|\mathrm{Conv}(u,v)|$ holds. Through this process, the relationship between $|\mathrm{Conv}(u_1,v,w_1)|$ and $|\mathrm{Conv}(u,v,w)|$ cannot be directly discerned. Temporarily disregarding the vertical walls and 30°-inclined walls in the reflection hyperplanes of type $\widetilde{G}_2$ in Fig.~\ref{fig.G2 example 1}, we effectively work within reflection hyperplanes of type $\widetilde{A}_2$. Under this framework, each group element resides in a new $\widetilde{A}_2$-chamber. For instance, $u_1$ lies in the bold-outlined regular triangle at the lower left. Let $\mathrm{Conv}_{\widetilde{A}_2}(u,v,w)$ denote the convex hull generated in $\widetilde{A}_2$. Returning to the $\widetilde{G}_2$ reflection hyperplanes, we establish:
\begin{equation}\label{G2 example 1 ineq2}
    2|\mathrm{Conv}_{\widetilde{A}_2}(u_1,v,w_1)|\geq\max\{\mathrm{Conv}(u,v,w),\mathrm{Conv}(u_1,v,w_1)\}.
\end{equation}
Given
\begin{equation}
    |\mathrm{Conv}(u_1,v)|\geq 2|\mathrm{Conv}_{\widetilde{A}_2}(u_1,v)|-1
\end{equation} 
and
\begin{equation}
    |\mathrm{Conv}(v,w_1)|\geq 2|\mathrm{Conv}_{\widetilde{A}_2}(v,w_1)|-1,
\end{equation}
where $|\mathrm{Conv}_{\widetilde{A}_2}(u_1,v)|$, $|\mathrm{Conv}_{\widetilde{A}_2}(v,w_1)|\geq 2$, we have
\begin{equation}
    \begin{aligned}
        &|\mathrm{Conv}(u_1,v)|\cdot|\mathrm{Conv}(v,w_1)|\\
        \geq&\left(2|\mathrm{Conv}_{\widetilde{A}_2}(u_1,v)|-1\right)\left(2|\mathrm{Conv}_{\widetilde{A}_2}(v,w_1)|-1\right)\\
        =&4|\mathrm{Conv}_{\widetilde{A}_2}(u_1,v)|\cdot|\mathrm{Conv}_{\widetilde{A}_2}(v,w_1)|-2\left(|\mathrm{Conv}_{\widetilde{A}_2}(u_1,v)|+|\mathrm{Conv}_{\widetilde{A}_2}(v,w_1)|\right)+1\\
        \geq&2|\mathrm{Conv}_{\widetilde{A}_2}(u_1,v)|\cdot|\mathrm{Conv}_{\widetilde{A}_2}(v,w_1)|.
    \end{aligned}
\end{equation}
It follows from Theorem~\ref{A2 strong hull property} that
\begin{equation}
    |\mathrm{Conv}_{\widetilde{A}_2}(u_1,v)|\cdot|\mathrm{Conv}_{\widetilde{A}_2}(v,w_1)|\geq |\mathrm{Conv}_{\widetilde{A}_2}(u_1,v,w_1)|.
\end{equation}
\end{example}

For Coxeter groups of type $\widetilde{G}_2$, the reduction techniques developed in Example~\ref{G2 example 1} allow us to reduce convex hulls to the $\widetilde{A}_2$ setting, where the required property has already been established through Theorem~\ref{A2 strong hull property}. Since any chamber corresponding to an element in type $\widetilde{G}_2$ is transformed into a chamber of regular triangular shape within type $\widetilde{A}_2$, the same procedure applies. We have thus established Conjecture~\ref{strong hull conj} for Coxeter groups of type $\widetilde{G}_2$.

\begin{theorem}\label{G2 strong hull property}
    The Cayley graph of affine type $\widetilde{G}_2$ has the strong hull property.
\end{theorem}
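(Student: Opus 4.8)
The plan is to sidestep a direct combinatorial classification in $\widetilde{G}_2$ — which is awkward precisely because of the reduced symmetry noted at the start of the subsection — and instead to \emph{bootstrap} from the already-proved type $\widetilde{A}_2$ result, Theorem~\ref{A2 strong hull property}. The geometric input I would use is that the $\widetilde{G}_2$ reflection arrangement (Fig.~\ref{fig:reflection hyperplanes in Euclidean plane}) refines an $\widetilde{A}_2$ arrangement: after deleting the vertical walls and the $30^\circ$-inclined walls, the surviving walls are exactly those of a type $\widetilde{A}_2$ tessellation, and each regular ($\widetilde{A}_2$) triangle is built from two $\widetilde{G}_2$ chambers (the normalization $\mathrm{Area}(T)=2$ of Example~\ref{G2 example 1}). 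Every element of $\widetilde{G}_2$ then sits in a well-defined regular triangle, so each hull computation can be transported to the coarser $\widetilde{A}_2$ model, where the inequality is known, at the cost of a controlled factor of $2$.

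Concretely, I would first fix $u,v,w$ and, exactly as in Sections~\ref{affine type A2} and~\ref{affine type C2}, apply the \emph{reduction techniques}: iterated reflections that slide $u$ and $w$ along the boundary of the hull to positions $u_1,w_1$, reaching the canonical configuration of Example~\ref{G2 example 1}. The base–height/area bookkeeping used there (treating each chamber as unit area and comparing the shaded parallelograms) shows these moves do not increase $|\mathrm{Conv}(u,v)|$ or $|\mathrm{Conv}(v,w)|$ while not decreasing $|\mathrm{Conv}(u,v,w)|$, so it suffices to verify the strong hull inequality in the reduced picture. For each endpoint I then select the regular triangle lying strictly inside the relevant two-point hull and normalize its position, thereby replacing each $\widetilde{G}_2$ chamber by an $\widetilde{A}_2$ chamber.

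The heart of the argument is the pair of comparison inequalities linking the two models. Writing $\mathrm{Conv}_{\widetilde{A}_2}$ for hulls computed after forgetting the extra walls, the factor coming from the refinement yields
\begin{equation}
|\mathrm{Conv}(u_1,v)| \geq 2\,|\mathrm{Conv}_{\widetilde{A}_2}(u_1,v)| - 1
\quad\text{and}\quad
|\mathrm{Conv}(v,w_1)| \geq 2\,|\mathrm{Conv}_{\widetilde{A}_2}(v,w_1)| - 1,
\end{equation}
together with the reverse bound $2\,|\mathrm{Conv}_{\widetilde{A}_2}(u_1,v,w_1)| \geq |\mathrm{Conv}(u,v,w)|$ and $\geq |\mathrm{Conv}(u_1,v,w_1)|$. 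Setting $p=|\mathrm{Conv}_{\widetilde{A}_2}(u_1,v)|$ and $q=|\mathrm{Conv}_{\widetilde{A}_2}(v,w_1)|$, both $\geq 2$, the elementary estimate $(2p-1)(2q-1)=4pq-2(p+q)+1\geq 2pq$ (valid since $2(p-1)(q-1)\geq 1$) collapses the two factors of $2$; then Theorem~\ref{A2 strong hull property} supplies $pq \geq |\mathrm{Conv}_{\widetilde{A}_2}(u_1,v,w_1)|$, and chaining these gives $|\mathrm{Conv}(u_1,v)|\cdot|\mathrm{Conv}(v,w_1)| \geq 2\,|\mathrm{Conv}_{\widetilde{A}_2}(u_1,v,w_1)| \geq |\mathrm{Conv}(u_1,v,w_1)|$, which is the strong hull inequality for $\widetilde{G}_2$.

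I expect the main obstacle to lie not in this single worked configuration but in justifying the comparison inequalities \emph{uniformly} across all placements. Two points need care: confirming that the refinement factor stays controlled so that the clean bounds $2(\cdot)-1$ and the reverse $2(\cdot)\geq(\cdot)$ hold for every reduced hull — including degenerate hexagons and boundary cases where an endpoint merely has a \emph{nearest} regular triangle rather than lying strictly inside one — and verifying that the reflection-based reduction steps, being less symmetric in $\widetilde{G}_2$, genuinely preserve the required monotonicity of $|\mathrm{Conv}(u,v,w)|$. Once these are established, the orientation-independence already observed for the canonical cases shows that the same computation covers every orientation of $v$, so that the $\widetilde{G}_2$ property follows as a corollary of the $\widetilde{A}_2$ one.
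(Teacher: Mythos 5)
Your proposal follows essentially the same route as the paper's own argument: reduce $u,w$ to $u_1,w_1$ by reflections, pass to the coarser $\widetilde{A}_2$ arrangement obtained by deleting the vertical and $30^\circ$ walls, use the comparison bounds $|\mathrm{Conv}(\cdot,\cdot)|\geq 2|\mathrm{Conv}_{\widetilde{A}_2}(\cdot,\cdot)|-1$ and $2|\mathrm{Conv}_{\widetilde{A}_2}(u_1,v,w_1)|\geq |\mathrm{Conv}(u,v,w)|$, and collapse the factors of $2$ via $(2p-1)(2q-1)\geq 2pq$ before invoking Theorem~\ref{A2 strong hull property}. The uniformity concerns you flag at the end (degenerate placements and monotonicity under the less symmetric reductions) are also the points the paper itself treats only by example, so your write-up is a faithful match to the published proof.
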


The combined results of Theorems~\ref{A2 strong hull property}, \ref{C2 strong hull property}, and \ref{G2 strong hull property} collectively establish the main result of this paper, Theorem~\ref{main result}.

\section*{Acknowledgments}
The author wishes to thank his bachelor's thesis advisor Shoumin Liu for his supervision. The author is also grateful to his mentor Yibo Gao for suggesting the problem regarding type $\widetilde{A}$ during the 2024 PKU Algebra and Combinatorics Experience. Special thanks are due to Shiyu Xiu for valuable discussions and for sharing his geometric insights regarding the type $\widetilde{A}_2$, which provided crucial inspiration for the reduction techniques developed in this paper. This work was partially supported by NSFC grant 12426507.

\bibliographystyle{amsplain}
\bibliography{sample_2025}

\end{document}